\numberwithin{equation}{section}
\newtheorem{theorem}{Theorem}[section]
\newtheorem{lemma}[theorem]{Lemma}
\newtheorem{proposition}[theorem]{Proposition}
\newtheorem{corollary}[theorem]{Corollary}
\newtheorem*{theorem*}{Theorem}
\newtheorem*{question*}{Question}
\newtheorem*{corollary*}{Corollary}
\newtheorem*{definition*}{Definition}
\newtheorem{letterthm}{Theorem}
\theoremstyle{definition}
\newtheorem{definition}[theorem]{Definition}
\theoremstyle{remark}
\newtheorem{example}[theorem]{Example}
\theoremstyle{remark}
\newtheorem{remark}[theorem]{Remark}
\begin{document}

\title{Complementation in continuous cohomology with coefficients in Banach modules}

\author{Mario Klisse}

\address{KU Leuven, Department of Mathematics,
Celestijnenlaan 200B, 3001 Leuven, Belgium}

\email{mario.klisse@kuleuven.be}

\begin{abstract}

In this article, we introduce the concept of weakly uniquely stationary representations. This framework enables us to investigate the complementability of closed subspaces within the context of continuous cohomology with coefficients in Banach modules. As an application, we extend and refine several cohomological results from the literature, particularly in the settings of nilpotent groups, products of groups, and lattices.

\end{abstract}

\date{\today. \emph{MSC2010:}  20J06, 22D12, 46B99. The author is supported by the FWO postdoctoral grant 1203924N of the Research Foundation Flanders.}

\maketitle

\section*{Introduction}

The study of group cohomology with coefficients in Banach modules is a rich area of mathematics that lies at the intersection of several fields, including algebra, topology, and functional analysis. Its motivations are both theoretical, in terms of understanding the algebraic and topological properties of groups and their actions, and practical, in terms of applications to areas like representation theory, harmonic analysis, and geometric group theory. In the context of unitary representations on Hilbert spaces, a notable application of the theory is the characterization of Kazhdan's Property (T) through the vanishing of 1-cohomology groups (see \cite{Delorme77}).

While the continuous cohomology of groups with coefficients in unitary representations on Hilbert spaces is well-studied, attempts of extending existing results to more general Banach modules often lead to technical difficulties due to the lack of concepts like orthogonality. One example of this phenomenon is the complementability of closed subspaces, whose study for general Banach modules was initiated by Nowak in \cite{Nowak17}. Recall that a subspace $W$ of a topological vector space $V$ is \emph{complemented}, if there exists a subspace $W^{\prime}\subseteq V$ such that $V$ decomposes as a topological direct sum $V=W\oplus W^{\prime}$. Under suitable conditions, particularly covering all isometric representations without almost invariant vectors on uniformly convex Banach spaces, the main result of \cite{Nowak17} proves the complementation of the subspace of 1-coboundaries in the space of all 1-cocycles. This result was later extended to higher degrees by Rosendal in a more restrictive setting (see \cite{Rosendal22}).\\

In this paper, we investigate the complementation of closed subspaces within the context of continuous cohomology with coefficients in Banach modules. To this end, we introduce the concept of weakly uniquely stationary representations.

\begin{definition*} \emph{Let $G$ be a topological group and $\mu$ a Baire probability measure on $G$. We call a uniformly bounded linear representation $\rho$ of $G$ on a Banach space $V$ \emph{weakly uniquely $\mu$-stationary} if it is $\mu$-integrable (in the sense of Definition \ref{PettisIntegrableDefinition}) and if every non-trivial functional that is invariant under the convolution with the probability measure $\mu$ admits an element $v\in V$ with $\phi(v)\neq0$ that is invariant under the convolution $\rho_{\mu}$ with $\mu$.} \end{definition*}

The $\rho_{\mu}$-invariant elements $V^{\mu}$ of a weakly uniquely $\mu$-stationary Banach $G$-module $V$ form the range of a unique projection $\widetilde{\mathbb{E}}_{\mu}$ on $V$. Our definition draws inspiration from the concept of uniquely stationary actions of groups on C$^{\ast}$-algebras, as introduced by Hartman and Kalantar in \cite{HartmanKalantar}. It unifies a number of existing concepts, such as weakly almost periodic Banach modules (see \cite{BaderRosendalSauer} and also \cite{CornulierTessera20}), strictly convex Banach modules, and representations vanishing at infinity. Additionally, it naturally relates to the work of Jaworski and Neufang on the Choquet--Deny equation in dual Banach spaces in \cite{JaworskiNeufang}, and to results of Bekka and Valette in \cite{BekkaValette97}.

Motivated by an important ingredient in the proof of Shalom's cohomological rigidity results for unitary representations of locally compact groups in \cite{Shalom00}, in \cite{BaderRosendalSauer} Bader, Rosendal and Sauer proved the vanishing of certain restriction maps in continuous reduced cohomology of weakly almost periodic Banach modules. We extend their main theorem to the setting of weakly uniquely stationary representations, enabling us to derive complementation results for the corresponding higher-order cohomology groups.

\begin{letterthm} Let $G$ be a topological group containing commuting subgroups $N,C\leq G$. Let furthermore $\mu$ be a Baire probability measure on $C$, $(V,\rho)$ a uniformly bounded, strongly operator continuous, weakly uniquely $\mu$-stationary Banach $G$-module, and $b\in Z^{n}(G,\rho)$ a $n$-cocycle. Then $\widetilde{\mathbb{E}}_{\mu}\circ(b|_{N^{n+1}})$ defines a $n$-cocycle that is almost cohomologous to the restriction $b|_{N^{n+1}}$. \end{letterthm}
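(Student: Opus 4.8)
The plan is to derive the statement from one ``averaging'' identity together with an ergodic limit, following the strategy of Shalom \cite{Shalom00} and of Bader--Rosendal--Sauer \cite{BaderRosendalSauer} but with $\rho_{\mu}$ in place of the weak-closure averaging operator used there. Throughout I would write $b|_{N}$ for $b|_{N^{n+1}}$ and work with homogeneous (equivariant) cochains, so that the differential $d$ is an alternating sum of face maps and commutes with post-composition by any bounded operator on $V$. The first point to record is that $\widetilde{\mathbb{E}}_{\mu}\circ b|_{N}$ is a cocycle: since $N$ and $C$ commute, $\rho(N)$ commutes with $\rho_{\mu}=\int_{C}\rho(c)\,d\mu(c)$ and hence preserves the space $V^{\mu}$ of $\rho_{\mu}$-invariant vectors; by the uniqueness of the projection $\widetilde{\mathbb{E}}_{\mu}$ onto $V^{\mu}$ it then commutes with every $\rho(h)$, $h\in N$. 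Consequently $\widetilde{\mathbb{E}}_{\mu}\circ b|_{N}$ is again an $N$-equivariant continuous cochain, and, $\widetilde{\mathbb{E}}_{\mu}$ being bounded and commuting with $d$, one gets $d(\widetilde{\mathbb{E}}_{\mu}\circ b|_{N})=\widetilde{\mathbb{E}}_{\mu}\circ (db)|_{N^{n+2}}=0$.

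The core step is to show that for each $g\in C$ the cochain $\rho(g)\circ b|_{N}$ is cohomologous to $b|_{N}$. Conjugation by $g$ induces a chain endomorphism $\tau_{g}$ of $C^{\bullet}(G,\rho)$ which is chain homotopic to the identity via the usual ($G$-equivariant) cone/prism homotopy $H_{g}$; because $[N,C]=1$ the conjugation is trivial on $N$, so the restriction $\tau_{g}(b)|_{N^{n+1}}$ is simply $\rho(g)\circ b|_{N}$, while $H_{g}(b)|_{N^{n}}$ lies in $C^{n-1}(N,\rho|_{N})$ automatically, and using $db=0$ one obtains $d\bigl(H_{g}(b)|_{N^{n}}\bigr)=\rho(g)\circ b|_{N}-b|_{N}$. (It is exactly here that commutativity of $N$ and $C$ is essential: for an element not centralizing $N$ the right-hand side would be a $\mathrm{conj}$-twisted cochain rather than $\rho(g)\circ b|_{N}$, and the subsequent averaging would fail.)

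Next I would average over convolution powers. Since $\rho$ is $\mu$-integrable in the sense of Definition \ref{PettisIntegrableDefinition}, uniformly bounded, and strongly operator continuous, and since $b$ is continuous, for each fixed tuple of variables the map $g\mapsto H_{g}(b)(\cdot)$ is integrable against $\mu^{\ast k}$; the uniform bound $\sup_{g}\|\rho(g)\|<\infty$ controls all the operator norms that occur. Integrating the identity of the previous paragraph gives
\[ d\Bigl(\textstyle\int_{C}H_{g}(b)|_{N^{n}}\,d\mu^{\ast k}(g)\Bigr)=\rho_{\mu^{\ast k}}\circ b|_{N}-b|_{N}, \]
so $\rho_{\mu^{\ast k}}\circ b|_{N}-b|_{N}$, and likewise each Ces\`aro average $\frac{1}{m}\sum_{k=0}^{m-1}\bigl(\rho_{\mu^{\ast k}}\circ b|_{N}-b|_{N}\bigr)$, is a coboundary in $C^{n-1}(N,\rho|_{N})$. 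Finally I would invoke the structure theory of weakly uniquely $\mu$-stationary modules established earlier — in particular that the relevant ergodic averages of $\rho_{\mu}$ converge, weakly and uniformly on compacta (using that $b$ maps compacta to compacta in $V$ and that $\rho$ is uniformly bounded), to $\widetilde{\mathbb{E}}_{\mu}$ — to conclude that these averages converge pointwise on $N^{n+1}$ to $\widetilde{\mathbb{E}}_{\mu}\circ b|_{N}-b|_{N}$. Hence $\widetilde{\mathbb{E}}_{\mu}\circ b|_{N}-b|_{N}$ lies in the weak closure, and therefore, the coboundaries forming a linear subspace, by Mazur's theorem also in the ordinary closure, of $B^{n}(N,\rho|_{N})$; that is precisely the claimed almost cohomology.

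I expect the main obstacle to be the last step: identifying the ergodic limit of the averages $\rho_{\mu^{\ast k}}\circ b|_{N}$ (or their Ces\`aro means) with $\widetilde{\mathbb{E}}_{\mu}\circ b|_{N}$, since it is here that the weak unique stationarity hypothesis does the real work, through the earlier results on the projection $\widetilde{\mathbb{E}}_{\mu}$ in the spirit of \cite{JaworskiNeufang} and \cite{BekkaValette97}. The remaining technical points — verifying that the cone homotopy restricts to a continuous $N$-equivariant cochain and that the $V$-valued integrals $\int_{C}H_{g}(b)|_{N^{n}}\,d\mu^{\ast k}(g)$ exist as Pettis integrals under the standing hypotheses — are routine but must be carried out with care about conventions and integrability.
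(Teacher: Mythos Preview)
Your overall architecture is exactly the paper's: use the prism homotopy $H_{c}$ (the paper writes it as $h_{c}$, borrowed from \cite[Lemma 18]{BaderRosendalSauer}) to exhibit $(\rho_{c}-1)\circ b|_{N}$ as a coboundary for each $c\in C$, then average and pass to the ergodic limit $\widetilde{\mathbb{E}}_{\mu}$. The first paragraph (that $\widetilde{\mathbb{E}}_{\mu}\circ b|_{N}$ is an $N$-cocycle) and the final convergence step (Ces\`aro means of $\rho_{\mu}^{k}$ converge in norm to $\widetilde{\mathbb{E}}_{\mu}$ by Proposition~\ref{EquivalentCharacterizations}, hence uniformly on the compact image $b(K)$ of any compact $K\subseteq N^{n+1}$) are correct; incidentally this makes your appeal to Mazur's theorem unnecessary, since the convergence is already in the compact-open topology.

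The genuine gap is the middle step, which you flag as ``routine'' but is in fact the reason the paper proceeds differently. Under the standing hypotheses there is no guarantee that the Pettis integral $\int_{C}H_{c}(b)|_{N^{n}}\,d\mu(c)$ exists: $\mu$-integrability of $\rho$ in Definition~\ref{PettisIntegrableDefinition} concerns only the orbit maps $c\mapsto\rho_{c}(v)$ for fixed $v\in V$, whereas $c\mapsto H_{c}(b)(m_{1},\dots,m_{n})$ involves values $b(m_{1},\dots,m_{i},m_{i}c,\dots,m_{n}c)$, a continuous $V$-valued function of $c$ which need not be bounded (no compact support on $\mu$ is assumed) and, even when bounded, need not be Pettis integrable for a general Banach space $V$. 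Already for $n=1$ one is asking for $\int_{C}b(e,c)\,d\mu(c)$, which has no reason to exist. (Compare Proposition~\ref{PiotrResult}, where precisely such an integral is used, but only after adding the extra hypothesis that $\mu$ is concentrated on a compact set.)

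The paper sidesteps this entirely: rather than integrating the homotopy identity, it invokes Proposition~\ref{ConvexHull} to approximate $\widetilde{\mathbb{E}}_{\mu}$ on any finite set of vectors by a \emph{finite} convex combination $\delta_{T}\in\mathrm{conv}\{\rho_{c}\mid c\in C\}$. For such $\delta_{T}$ the identity $(1-\delta_{T})\circ b|_{N}\in B^{n}(N,\rho|_{N})$ follows from the single-element case $(1-\rho_{c})\circ b|_{N}=\partial^{n}(h_{c}(b)|_{N})$ by taking finite sums, with no integration needed; one then shows $(1-\delta_{T})\circ b|_{N}\to(1-\widetilde{\mathbb{E}}_{\mu})\circ b|_{N}$ in the compact-open topology. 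Your argument becomes correct if you replace the Pettis-integration step by this finite-approximation device.
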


The study of the complementation of closed subspaces in continuous cohomology has several intriguing applications, such as in Hodge theory, in the deduction of the vanishing of certain cohomology groups (see e.g. \cite{Garland73}), and in Ozawa's functional analytic proof of Gromov's polynomial growth theorem presented in \cite{Ozawa18}. Additionally, it has been applied to address rigidity questions concerning unitary representations of irreducible integrable lattices in products of groups as in \cite{Shalom00}, \cite{BaderFurmanGelanderMonod} (see also \cite{Creutz11}, \cite{Creutz22}). Inspired by this, we utilize our aforementioned results to examine the decomposition of the continuous cohomology of products of groups. 

\begin{letterthm} Let $G:=G_{1}\times G_{2}$ be a product of topological groups and $(V,\rho)$ a uniformly bounded strongly operator continuous Banach $G$-module. Assume that $G_{1}$ and $G_{2}$ admit Baire probability measures $\mu_{1}$ and $\mu_{2}$ and null sets $\mathcal{N}_{1}\subseteq G_{1}$, $\mathcal{N}_{2}\subseteq G_{2}$ for which both $G_{1}\setminus\mathcal{N}_{1}$ and $G_{2}\setminus\mathcal{N}_{2}$ are contained in compact sets. Assume furthermore that $\Vert\rho_{\mu_{1}}\rho_{\mu_{2}}\Vert<1$ and that $V^{\mu_{1}}=V^{G_{1}}$ and $V^{\mu_{2}}=V^{G_{2}}$. Then there exists a topological isomorphism of continuous cohomology groups $H_{c}^{1}(G_{1}\times G_{2},V)\cong H_{c}^{1}(G_{1},V^{G_{2}})\oplus H_{c}^{1}(G_{2},V^{G_{1}})$. \end{letterthm}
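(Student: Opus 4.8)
The isomorphism will be the natural ``projection--restriction'' map. Write $\partial v$ for the coboundary $g\mapsto\rho(g)v-v$, and for a continuous $1$-cocycle $b$ put $b_{i}:=b|_{G_{i}}$, viewing $G_{i}\le G$ through the two factors; the cocycle identity then gives $b(g_{1},g_{2})=b_{1}(g_{1})+\rho(g_{1})b_{2}(g_{2})$ together with the compatibility relation $(1-\rho(g_{2}))b_{1}(g_{1})=(1-\rho(g_{1}))b_{2}(g_{2})$. Under the hypotheses, $\rho$ restricted to $G_{i}$ is weakly uniquely $\mu_{i}$-stationary for $i=1,2$ (this is where relative compactness of $G_{i}\setminus\mathcal N_{i}$, strong operator continuity and uniform boundedness are used), so we have the stationary projections $\widetilde{\mathbb E}_{\mu_{i}}$ with range $V^{\mu_{i}}=V^{G_{i}}$. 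Define
\[
\Phi\colon H_{c}^{1}(G,V)\longrightarrow H_{c}^{1}(G_{1},V^{G_{2}})\oplus H_{c}^{1}(G_{2},V^{G_{1}}),\qquad \Phi[b]=\bigl(\,[\widetilde{\mathbb E}_{\mu_{2}}\circ b_{1}]\,,\,[\widetilde{\mathbb E}_{\mu_{1}}\circ b_{2}]\,\bigr),
\]
which is meaningful because $\widetilde{\mathbb E}_{\mu_{2}}$ commutes with $\rho(G_{1})$ (its defining measure lives on the commuting subgroup $G_{2}$) and has range $V^{G_{2}}$, so $\widetilde{\mathbb E}_{\mu_{2}}\circ b_{1}$ is a $G_{1}$-cocycle into $V^{G_{2}}$, and symmetrically; coboundaries map to coboundaries and both assignments are continuous. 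In the reverse direction set $\Psi([c_{1}],[c_{2}])=[b]$ with $b(g_{1},g_{2}):=c_{1}(g_{1})+\rho(g_{1})c_{2}(g_{2})$; using $c_{1}(G_{1})\subseteq V^{G_{2}}$, $c_{2}(G_{2})\subseteq V^{G_{1}}$ and that $G_{1}$ commutes with $G_{2}$, one checks that $b$ is a continuous $G$-cocycle, that $\Psi$ is well defined on cohomology, and that $\Phi\circ\Psi=\mathrm{id}$ (indeed $b_{1}=c_{1}$, $b_{2}=c_{2}$ on the nose, and $\widetilde{\mathbb E}_{\mu_{i}}$ is the identity on its range $V^{G_{i}}$). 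Thus everything reduces to $\Psi\circ\Phi=\mathrm{id}$; equivalently, every class of $H_{c}^{1}(G,V)$ must have a representative of the ``split'' form $c_{1}(g_{1})+\rho(g_{1})c_{2}(g_{2})$.

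For this I produce a $G$-module decomposition of $V$. Using the identification of the ranges of the $\widetilde{\mathbb E}_{\mu_{i}}$ with $V^{G_{i}}$ and their uniqueness, one checks that each $\widetilde{\mathbb E}_{\mu_{i}}$ intertwines the full $G$-action. Moreover $\|\rho_{\mu_{1}}\rho_{\mu_{2}}\|<1$ forces $V^{G}=V^{\mu_{1}}\cap V^{\mu_{2}}=0$, whence $\widetilde{\mathbb E}_{\mu_{1}}\widetilde{\mathbb E}_{\mu_{2}}=\widetilde{\mathbb E}_{\mu_{2}}\widetilde{\mathbb E}_{\mu_{1}}=0$ and $\widetilde{\mathbb E}_{\mu_{1}}+\widetilde{\mathbb E}_{\mu_{2}}$ is a projection, so $V=V^{G_{1}}\oplus V^{G_{2}}\oplus D$ as Banach $G$-modules, where $D:=\ker\widetilde{\mathbb E}_{\mu_{1}}\cap\ker\widetilde{\mathbb E}_{\mu_{2}}$. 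Any cocycle splits accordingly as $b=b^{(1)}+b^{(2)}+b^{(D)}$ with $b^{(1)}=\widetilde{\mathbb E}_{\mu_{1}}\circ b\in Z^{1}_{c}(G,V^{G_{1}})$, etc. On $V^{G_{1}}$ the subgroup $G_{1}$ acts trivially and $(V^{G_{1}})^{G_{2}}=V^{G}=0$, so the compatibility relation applied to $b^{(1)}$ forces $b^{(1)}_{1}\equiv0$, i.e.\ $b^{(1)}$ is inflated from $b^{(1)}_{2}=\widetilde{\mathbb E}_{\mu_{1}}\circ b_{2}$; symmetrically $b^{(2)}$ is inflated from $b^{(2)}_{1}=\widetilde{\mathbb E}_{\mu_{2}}\circ b_{1}$. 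Hence the split cocycle $(g_{1},g_{2})\mapsto\widetilde{\mathbb E}_{\mu_{2}}(b_{1})(g_{1})+\rho(g_{1})\widetilde{\mathbb E}_{\mu_{1}}(b_{2})(g_{2})$ representing $\Psi\Phi[b]$ equals $b^{(1)}+b^{(2)}$, so $b$ minus this representative is exactly $b^{(D)}$, and $\Psi\Phi[b]=[b]$ as soon as $b^{(D)}\in B^{1}_{c}(G,D)$. Therefore the theorem follows once we prove $H_{c}^{1}(G,D)=0$.

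To see $H_{c}^{1}(G,D)=0$, note $D^{\mu_{i}}=D\cap V^{\mu_{i}}=\ker\widetilde{\mathbb E}_{\mu_{i}}\cap\operatorname{range}\widetilde{\mathbb E}_{\mu_{i}}=0$, and $\|\rho_{\nu}\|_{B(D)}=\|\rho_{\mu_{1}}\rho_{\mu_{2}}\|_{B(D)}<1$ for $\nu:=\mu_{1}\times\mu_{2}$, so $1-\rho_{\nu}$ is invertible on $D$. Given $\delta\in Z^{1}_{c}(G,D)$, the near-compact support of the $\mu_{i}$ and continuity of $\delta$ make $\int_{G}\delta\,d\nu$ and $w_{i}:=\int_{G_{i}}\delta|_{G_{i}}\,d\mu_{i}$ well defined; set $\xi:=(1-\rho_{\nu})^{-1}\!\int_{G}\delta\,d\nu\in D$ and $F:=\delta-\partial(-\xi)$, so that $F$ is again a cocycle with $\int_{G}F\,d\nu=0$. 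Writing $F_{i}=F|_{G_{i}}$ and $v_{i}=\int_{G_{i}}F_{i}\,d\mu_{i}$, integrating $F(g_{1},g_{2})=F_{1}(g_{1})+\rho(g_{1})F_{2}(g_{2})$ against $\nu$ gives $v_{1}=-\rho_{\mu_{1}}v_{2}$, while integrating the compatibility relation $(1-\rho(g_{2}))F_{1}(g_{1})=(1-\rho(g_{1}))F_{2}(g_{2})$ against $\mu_{1}$, against $\mu_{2}$, and once more yields
\[
(1-\rho_{\mu_{1}})F_{2}(g_{2})=(1-\rho(g_{2}))v_{1},\qquad (1-\rho_{\mu_{2}})F_{1}(g_{1})=(1-\rho(g_{1}))v_{2},\qquad (1-\rho_{\mu_{2}})v_{1}=(1-\rho_{\mu_{1}})v_{2}.
\]
Substituting $v_{1}=-\rho_{\mu_{1}}v_{2}$ into the last equation and using $\rho_{\mu_{1}}\rho_{\mu_{2}}=\rho_{\mu_{2}}\rho_{\mu_{1}}$ collapses it to $(1-\rho_{\mu_{1}}\rho_{\mu_{2}})v_{2}=0$, whence $v_{2}=0$ and then $v_{1}=0$. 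Feeding this back, $F_{1}$ takes values in $D^{\mu_{2}}=0$ and $F_{2}$ in $D^{\mu_{1}}=0$, so $F\equiv0$ and $\delta=\partial(-\xi)\in B^{1}_{c}(G,D)$. This establishes $H_{c}^{1}(G,D)=0$, and hence $\Psi\Phi=\mathrm{id}$.

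Since $\Phi$ and $\Psi$ are mutually inverse continuous homomorphisms between the two cohomology groups, $\Phi$ is a topological isomorphism, as claimed. I expect the main obstacle to lie in two places: first, upgrading the stationary projections $\widetilde{\mathbb E}_{\mu_{i}}$ to genuine $G$-module projections, where the machinery of weakly uniquely stationary representations — behind which sits the vanishing of restriction maps of Theorem~A — together with the Choquet--Deny--type hypothesis $V^{\mu_{i}}=V^{G_{i}}$ is what is needed; and second, the vanishing $H_{c}^{1}(G,D)=0$, which is the unique point at which $\|\rho_{\mu_{1}}\rho_{\mu_{2}}\|<1$ is used and which is precisely what promotes the splitting from reduced to unreduced continuous cohomology.
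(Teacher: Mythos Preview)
Your proof has a genuine gap at the very first step: the claim that $(V,\rho|_{G_i})$ is weakly uniquely $\mu_i$-stationary is not justified by the hypotheses, and there is no reason to expect it to hold. The near-compact support of $\mu_i$ together with strong operator continuity and uniform boundedness gives $\mu_i$-\emph{integrability} (the Bochner integral $\rho_{\mu_i}(v)$ lands in $V$, by Pettis' measurability theorem), but weak unique $\mu_i$-stationarity is the much stronger mean-ergodic statement $V=\overline{\operatorname{im}(\Delta_{\mu_i})}\oplus V^{\mu_i}$, equivalently norm convergence of $\frac{1}{n}\sum_{k<n}\rho_{\mu_i}^k$. Nothing in the hypotheses --- neither $V^{\mu_i}=V^{G_i}$ nor $\|\rho_{\mu_1}\rho_{\mu_2}\|<1$ --- forces this; on a non-reflexive $V$ the Ces\`aro averages of a single power-bounded operator need not converge. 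Consequently the projections $\widetilde{\mathbb E}_{\mu_i}$ on which your map $\Phi$ and your decomposition $V=V^{G_1}\oplus V^{G_2}\oplus D$ rest may simply fail to exist.

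The paper sidesteps this entirely: it never projects $V$ onto $V^{G_i}$. Instead it uses the invertibility of $1-\rho_{\mu_1}\rho_{\mu_2}$ to build a projection \emph{on cocycles}, $P\colon Z^1(G,\rho)\to\mathcal H^1(G,\rho)$ onto the $\mu_1\!\times\!\mu_2$-harmonic cocycles, and then shows directly that any harmonic cocycle restricted to $G_i$ already takes values in $V^{\mu_{3-i}}=V^{G_{3-i}}$. This last step is precisely your computation for $H^1_c(G,D)=0$, run on all of $V$ rather than on the (possibly nonexistent) complement $D$: one sets $v=\int_{G_1}b(g,e)\,d\mu_1$, $w=\int_{G_2}b(h,e)\,d\mu_2$, obtains $\rho_{\mu_1}\rho_{\mu_2}v=v$ hence $v=w=0$, and concludes $b(h,e)\in V^{\mu_1}$ (not $=0$, since $V^{\mu_1}$ need not vanish). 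So your core analytic idea is the right one --- you just need to apply it directly at the level of cocycles, as the paper does, rather than routing it through a Banach-space decomposition that the hypotheses do not provide.
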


\begin{letterthm} Let $G:=G_{1}\times G_{2}$ be a product of topological groups, $\mu_{1}$, $\mu_{2}$ Baire probability measures on $G_{1}$ and $G_{2}$, and $(V,\rho)$ a uniformly bounded strongly operator continuous Banach $G$-module. Assume that $(V,\rho|_{G_{1}})$ is weakly uniquely $\mu_{1}$-stationary without almost invariant unit vectors, and that $(V,\rho|_{G_{2}})$ is weakly uniquely $\mu_{2}$-stationary. Then there exists an embedding of topological vector spaces $H_{c}^{1}(G_{1}\times G_{2},V)\hookrightarrow H_{c}^{1}(G_{1},V^{\mu_{2}})\oplus H_{c}^{1}(G_{2},V^{\mu_{1}})$. If furthermore $V^{\mu_{1}}=V^{G_{1}}$ and $V^{\mu_{2}}=V^{G_{2}}$, then the embedding is surjective, i.e. $H_{c}^{1}(G_{1}\times G_{2},V)\cong H_{c}^{1}(G_{1},V^{G_{2}})$. \end{letterthm}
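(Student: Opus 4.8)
The plan is to build the map out of the projections $\widetilde{\mathbb{E}}_{\mu_1},\widetilde{\mathbb{E}}_{\mu_2}$ and the restriction homomorphisms, and to extract injectivity from Theorem A. Since $G_1$ and $G_2$ commute, every $\rho(g_1)$ commutes with the convolution operator $\rho_{\mu_2}$, so $V^{\mu_2}$ is a closed $\rho|_{G_1}$-invariant subspace and, by uniqueness of the projection onto the $\rho_{\mu_2}$-fixed points, $\widetilde{\mathbb{E}}_{\mu_2}$ commutes with every $\rho(g_1)$; symmetrically for $\widetilde{\mathbb{E}}_{\mu_1}$ and $G_2$. (Note also that weak unique $\mu_2$-stationarity of $\rho|_{G_2}$ is literally the same condition as for the $G$-module $\rho$, since it only refers to $\rho_{\mu_2}$; likewise for $\mu_1$, which is what lets Theorem A apply.) Hence for $b\in Z^{1}(G_{1}\times G_{2},\rho)$ the compositions $\widetilde{\mathbb{E}}_{\mu_2}\circ(b|_{G_1})$ and $\widetilde{\mathbb{E}}_{\mu_1}\circ(b|_{G_2})$ are continuous $1$-cocycles of $G_1$ on $V^{\mu_2}$, resp. of $G_2$ on $V^{\mu_1}$, and they are coboundaries of $\widetilde{\mathbb{E}}_{\mu_2}v$, resp. $\widetilde{\mathbb{E}}_{\mu_1}v$, whenever $b=\rho(\cdot)v-v$. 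This gives a well-defined continuous linear map
\[
\Phi\colon H_c^{1}(G_{1}\times G_{2},V)\to H_c^{1}(G_{1},V^{\mu_2})\oplus H_c^{1}(G_{2},V^{\mu_1}),\qquad [b]\mapsto\bigl([\widetilde{\mathbb{E}}_{\mu_2}\circ b|_{G_1}],\,[\widetilde{\mathbb{E}}_{\mu_1}\circ b|_{G_2}]\bigr).
\]

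For injectivity, suppose $\widetilde{\mathbb{E}}_{\mu_2}\circ b|_{G_1}$ is a coboundary of $G_1$. Theorem A, applied with $N=G_1$, $C=G_2$ and the measure $\mu_2$, says that $b|_{G_1}-\widetilde{\mathbb{E}}_{\mu_2}\circ b|_{G_1}$ lies in the closure of $B^{1}(G_1,\rho|_{G_1})$, hence so does $b|_{G_1}$. Here I would invoke the hypothesis that $\rho|_{G_1}$ has no almost invariant unit vectors: this forces $V^{G_1}=0$ (a normalised invariant vector is a constant net of almost invariant unit vectors) and, more importantly, makes the coboundary map $v\mapsto(g\mapsto\rho(g)v-v)$ bounded below for the seminorm $\sup_{g\in K_0}\|\,\cdot\,(g)\|$ attached to some compact $K_0\subseteq G_1$; consequently $B^{1}(G_1,\rho|_{G_1})$ is closed and $b|_{G_1}=\rho(\cdot)v_1-v_1$ for a (unique) $v_1\in V$. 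Subtracting this coboundary, we may assume $b|_{G_1}=0$; then the cocycle relation and $g_1g_2=g_2g_1$ give $\rho(g_1)b(g_2)=b(g_1g_2)=b(g_2)$, so $b(g_2)\in V^{G_1}=0$ for all $g_2$, whence $b\equiv 0$. Thus $\Phi$ is injective, and, being induced by continuous operations on the cocycle spaces, it is the asserted embedding of topological vector spaces (a continuous inverse on the image can be produced from the closedness of $B^{1}(G_1,\rho|_{G_1})$, and in the Fréchet case it is automatic by the open mapping theorem). I expect this upgrade from the ``almost cohomologous'' conclusion of Theorem A to a genuine cohomology relation — the place where absence of almost invariant unit vectors of $\rho|_{G_1}$ is really used — to be the main obstacle; the equivariance of $\widetilde{\mathbb{E}}_{\mu_i}$ under the commuting factor is a secondary point, and the cocycle identities and continuity checks are routine.

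For the final assertion, assume in addition $V^{\mu_1}=V^{G_1}$ and $V^{\mu_2}=V^{G_2}$. Then $V^{\mu_1}=V^{G_1}=0$ by the above, so the target of $\Phi$ is just $H_c^{1}(G_{1},V^{G_2})$. Since $V^{G_2}$ is a $G$-submodule on which $G_2$ acts trivially, inflation along $G_{1}\times G_{2}\twoheadrightarrow G_{1}$ provides a continuous map $\mathrm{infl}\colon Z^{1}(G_{1},V^{G_2})\to Z^{1}(G_{1}\times G_{2},V)$, and for $c\in Z^{1}(G_{1},V^{G_2})$ one checks $(\mathrm{infl}\,c)|_{G_1}=c$ and $(\mathrm{infl}\,c)|_{G_2}=0$; as $\widetilde{\mathbb{E}}_{\mu_2}$ fixes $V^{\mu_2}=V^{G_2}$ pointwise, $\Phi([\mathrm{infl}\,c])=[c]$. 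Hence $\Phi$ is onto, and by injectivity $\Phi([b]-[\mathrm{infl}(\widetilde{\mathbb{E}}_{\mu_2}\circ b|_{G_1})])=0$ shows that $\mathrm{infl}$ descends to a continuous two-sided inverse of $\Phi$ on cohomology. This yields the topological isomorphism $H_c^{1}(G_{1}\times G_{2},V)\cong H_c^{1}(G_{1},V^{G_2})$.
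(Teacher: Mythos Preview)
Your argument is correct and, for the injectivity step, genuinely more elementary than the paper's. Both proofs define the same map $\Phi$ and handle well-definedness and the surjectivity under $V^{\mu_i}=V^{G_i}$ in the same way (via inflation from $G_1$). The difference is in how one shows $\Phi$ is injective. The paper does not first conclude that $b|_{G_1}$ is an exact coboundary; instead it tracks the approximating net $(\delta_T)\subseteq\mathrm{conv}\{\rho_g\mid g\in G_2\}$ from the proof of Theorem~A and uses it to build explicit $G$-cochains $\xi_T(g)=\sum_x\delta_T(x)\rho_g b(e,x)$, then carries out a two-variable estimate (over a compact in $G_1$ and an arbitrary compact in $G_2$) to show $\partial^1\xi_T\to b$ in $Z^1(G,\rho)$, finally invoking the spectral-gap bound to conclude $b\in B^1(G,\rho)$. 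You instead use Theorem~A as a black box to get $b|_{G_1}\in\overline{B^1(G_1,\rho|_{G_1})}$, use the spectral-gap bound once to close $B^1(G_1,\rho|_{G_1})$, subtract the resulting exact $G$-coboundary, and then finish with the purely algebraic observation that $b|_{G_1}=0$ together with $V^{G_1}=0$ forces $b\equiv 0$. This avoids the paper's mixed $G_1$--$G_2$ estimate entirely and makes clear that only the first component of $\Phi$ is needed for injectivity. What the paper's route buys is an explicit net of $G$-coboundaries converging to $b$, which is the natural input for the ``continuous inverse on the image'' claim in full generality; your appeal to the open mapping theorem is not literally applicable (the image need not be closed), but since your injectivity argument is already quantitative---the bound $\|v_1\|\le\varepsilon^{-1}\sup_{g\in K_0}\|b(g)\|$ controls the coboundary witness---this can be upgraded to continuity of $\Phi^{-1}$ with a little more care, and the paper is equally terse on this point.
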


Building on the ideas presented in \cite{Shalom00}, \cite{BaderFurmanGelanderMonod}, \cite{Creutz11}, we use these decompositions to derive new superrigidity results for suitable representations of irreducible integrable lattices in products of groups.\\

\noindent \emph{Structure}. The article is organized as follows. In Section \ref{sec:Preliminaries-and-notation}, we review fundamental concepts related to the theory of Baire sets and Baire measures, Banach modules, and group cohomology. In Section \ref{StationaryElements} we investigate elements in the bidual of Banach spaces that are stationary with respect to convolution with a given Baire probability measure. Motivated by our results, in Section \ref{sec:The-projection} we introduce the concept of weakly uniquely stationary representations and explore various examples, including uniquely stationary actions of discrete groups on C$^{\ast}$-algebras, weakly almost periodic Banach modules, and linear representations vanishing at infinity. Finally, in Section \ref{sec:Application-to-cohomology}, we apply the notion of weakly uniquely stationary representations to the study of group cohomology with coefficients in Banach modules, obtaining complementation and rigidity results.

\vspace{3mm}



\section{Preliminaries and notation\label{sec:Preliminaries-and-notation}}

\subsection{General notation}

In this article, all Banach spaces will be assumed to be real or complex. Given a Banach space $V$, we denote the bounded linear operators on $V$ by $\mathcal{B}(V)$ and $\text{GL}(V)\subseteq\mathcal{B}(V)$ is the subset of invertible bounded linear operators on $V$. The dual space and bidual space of $V$ will be denoted by $V^{\ast}$ and $V^{\ast\ast}$ respectively. We furthermore write $C_{b}(X,\mathbb{R})$ for the bounded continuous real-valued functions on a topological space $X$. The topological groups in this article are not necessarily assumed to be Hausdorff.

\vspace{3mm}


\subsection{Baire sets and Baire measures}

In the study of measures in terms of integrals of continuous functions, it is often convenient or even required to pass to $\sigma$-algebras that are smaller than the corresponding Borel $\sigma$-algebra. In the general topological setting considered in this article, the notion of Baire measures will be suitable. Note that there exist several (inequivalent) definitions of this concept; we will be following the conventions and notation of \cite{Bogachev} and \cite{Fremlin}.

Define the \emph{Baire $\sigma$-algebra} $\mathcal{B}a(X)$ of a topological space $X$ as the smallest $\sigma$-algebra with respect to which all continuous real-valued functions on $X$ are measurable. In general $\mathcal{B}a(X)$ does not need to coincide with the Borel $\sigma$-algebra of $X$; for metrizable spaces however this is the case. A (signed) measure on $\mathcal{B}a(X)$ is called a \emph{Baire measure} and the set of all Baire measures will be denoted by $\mathcal{M}_{\sigma}(X)$. Its subset of finite Baire measures can be equipped with the (locally convex, Hausdorff) \emph{weak topology} induced by the requirement that a net $(\mu_{\lambda})_{\lambda\in\Lambda}$ \emph{weakly converges} to $\mu$ if and only if $\int_{X}fd\mu_{\lambda}\rightarrow\int_{X}fd\mu$ for all $f\in C_{b}(X,\mathbb{R})$. (Note that this terminology, despite being standard, conflicts with the standard one in functional analysis.) We denote the subspace of \emph{Baire probability measures} on $X$ by $\mathcal{P}_{\sigma}(X)$ and equip it with the subspace topology.\\

We will make use of the following well-known statement, see e.g. \cite[Example 4.2.5]{Bogachev}. We include its proof for the convenience of the reader.

\begin{lemma} \label{MeasureDensity} Let $X$ be a topological space. Then the convex hull of the set of all point measures in $X$ is weakly dense in $\mathcal{P}_{\sigma}(X)$. \end{lemma}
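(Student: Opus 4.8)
The plan is to show that every Baire probability measure on $X$ lies in the weak closure of the convex hull of point masses, using the Hahn--Banach separation theorem in an appropriate dual pairing. First I would fix the relevant duality: the weak topology on the space of finite Baire measures is by definition the topology induced by the pairing $\langle \mu, f\rangle = \int_X f\,d\mu$ with $f$ ranging over $C_b(X,\mathbb{R})$. Thus the weak-closed convex subsets of $\mathcal{P}_\sigma(X)$ are exactly the intersections of $\mathcal{P}_\sigma(X)$ with half-spaces $\{\mu : \int f\,d\mu \le c\}$, and a standard Hahn--Banach separation argument applies provided $\mathcal{P}_\sigma(X)$ (or the ambient vector space of finite signed Baire measures) is a locally convex Hausdorff space under this topology — which is exactly what the excerpt asserts when it records that the weak topology is locally convex and Hausdorff.

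The core of the argument: let $K$ denote the weak closure of the convex hull of $\{\delta_x : x \in X\}$ inside $\mathcal{P}_\sigma(X)$, and suppose for contradiction that some $\mu_0 \in \mathcal{P}_\sigma(X)$ does not belong to $K$. Since $K$ is a nonempty closed convex set not containing $\mu_0$, by the Hahn--Banach separation theorem applied in the locally convex Hausdorff topological vector space of finite signed Baire measures, there is a continuous linear functional separating $\mu_0$ strictly from $K$. Every such functional is of the form $\nu \mapsto \int_X f\,d\nu$ for some $f \in C_b(X,\mathbb{R})$ (this is precisely the identification of the topological dual of the space of measures under the weak topology, and is the one point that needs a careful word — it follows from the definition of the weak topology as the initial topology for these functionals, together with the standard fact that the dual of a space carrying an initial locally convex topology is the span of the defining functionals). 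Hence there exist $f \in C_b(X,\mathbb{R})$ and $c \in \mathbb{R}$ with
\[
\int_X f\,d\nu < c < \int_X f\,d\mu_0 \quad\text{for all } \nu \in K.
\]
In particular, taking $\nu = \delta_x$ gives $f(x) < c$ for every $x \in X$, so $f \le c$ pointwise on $X$; integrating against the probability measure $\mu_0$ yields $\int_X f\,d\mu_0 \le c$, contradicting $\int_X f\,d\mu_0 > c$. Therefore no such $\mu_0$ exists and $K = \mathcal{P}_\sigma(X)$.

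I expect the main (and essentially only) obstacle to be the identification of the continuous dual of the space of finite signed Baire measures equipped with the weak topology as precisely $\{\,\nu \mapsto \int f\,d\nu : f \in C_b(X,\mathbb{R})\,\}$, so that the separating functional produced by Hahn--Banach can be represented by a genuine bounded continuous function $f$; once that is in hand, the rest is the short pointwise estimate above. This identification is standard for initial topologies defined by a family of linear functionals (the dual is the linear span of that family), but it is worth stating explicitly since it is the hinge of the argument. One should also note at the outset that $\delta_x \in \mathcal{P}_\sigma(X)$ for every $x$ — i.e. point masses really are Baire probability measures — which is immediate since $\delta_x$ is defined on all of $\mathcal{B}a(X)$, and that finite convex combinations of point masses are again in $\mathcal{P}_\sigma(X)$, so the convex hull in question genuinely sits inside $\mathcal{P}_\sigma(X)$ and taking its closure there makes sense.
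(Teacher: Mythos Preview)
Your proof is correct, but it takes a different route from the paper's. The paper gives a direct, constructive argument: given $\mu$ and a basic weak neighbourhood $\mathcal{U}_{f_1,\dots,f_n,\varepsilon}(\mu)$, it approximates each $f_i$ in sup-norm by a simple function $g_i = \sum_j c_{i,j}\chi_{M_j}$ with common measurable pieces $M_1,\dots,M_m$, picks one point $x_j$ in each $M_j$, and forms $\nu = \mu\bigl(X\setminus\bigcup_j M_j\bigr)\delta_y + \sum_j \mu(M_j)\delta_{x_j}$, which visibly lies in the neighbourhood. Your argument instead separates a hypothetical $\mu_0 \notin K$ from the closed convex set $K$ by Hahn--Banach, identifies the separating functional with integration against some $f\in C_b(X,\mathbb{R})$, and derives the contradiction $\int f\,d\mu_0 \le c < \int f\,d\mu_0$ from the pointwise bound $f(x)<c$. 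Your approach is shorter and more conceptual, but it rests on the identification of the dual of the measure space under the weak topology with (the span of) integration functionals --- a standard fact about initial topologies that you rightly flag as the one nontrivial ingredient. The paper's approach is entirely elementary and self-contained, avoiding any appeal to separation theorems or duality, at the cost of a small explicit construction.
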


\begin{proof} Let $\mu\in\mathcal{P}_{\sigma}(X)$. Sets of the form 
\[
\mathcal{U}_{f_{1},...,f_{n},\varepsilon}(\mu):=\left\{ \nu\in\mathcal{P}_{\sigma}(X)\mid\left|\int_{X}f_{i}d\mu-\int_{X}f_{i}d\nu\right|<\varepsilon\text{ for }i=1,...,n\right\} 
\]
with $\varepsilon>0$ and $f_{1},...,f_{n}\in C_{b}(X,\mathbb{R})$ define a neighbourhoods basis of $\mu$. It hence suffices to show that every such set intersects non-trivially with the convex hull of all point measures $\delta_{x}$, $x\in X$. Since the functions $f_{1},...,f_{n}$ are measurable and bounded, we find simple functions $g_{1},...,g_{n}$ for which $\sup_{1\leq i\leq n}\Vert f_{i}-g_{i}\Vert_{\infty}<\varepsilon/2$. The $g_{i}$ are (without loss of generality) of the form $g_{i}=\sum_{j=1}^{m}c_{i,j}\chi_{M_{j}}$ with pairwise disjoint Baire sets $M_{1},...,M_{m}\in\mathcal{B}a(X)$ and $c_{i,1},...,c_{i,m}\in\mathbb{R}$. For every $1\leq j\leq m$ pick $x_{j}\in M_{j}$, let $y\in X\setminus(\bigcup_{j=1}^{m}M_{j})$ and define $\nu:=\mu\left(X\setminus(\bigcup_{j=1}^{m}M_{j})\right)\delta_{y}+\sum_{j=1}^{m}\mu(M_{j})\delta_{x_{j}}$. Then $\nu$ is a Baire probability measure with $\int_{X}g_{i}d\mu=\sum_{j=1}^{m}c_{i,j}\mu(M_{j})=\int_{X}g_{i}d\nu$ for $i=1,...,n$ and hence $\nu\in\mathcal{U}_{f_{1},...,f_{n},\varepsilon}(\mu).$ \end{proof}

\vspace{3mm}


\subsection{Banach modules}

Let $G$ be a topological group and $\rho:G\rightarrow\text{GL}(V)$, $g\mapsto\rho_{g}$ a uniformly bounded linear representation of $G$ on a Banach space $V$. For the sake of brevity we also say that $(V,\rho)$ is a \emph{uniformly bounded Banach $G$-module} and simply write $g.v:=\rho_{g}(v)$ for $g\in G$, $v\in V$. The representation $\rho$ naturally induces a uniformly bounded linear representation $\rho^{\ast}:G\rightarrow\text{GL}(V^{\ast})$ of $G$ on the dual space $V^{\ast}$ via $\rho_{g}^{\ast}(\phi):=\phi\circ\rho_{g^{-1}}$ for $g\in G$, $\phi\in V^{\ast}$, which is called the \emph{dual representation} or \emph{contragradient representation}. Recall that there exists an isometric embedding $\iota_{V}:V\hookrightarrow V^{\ast\ast}$ of $V$ into its bidual via $(\iota_{V}(v))(\phi):=\phi(v)$ for every $v\in V$, $\phi\in V^{\ast}$. We will also write $\widehat{v}:=\iota_{V}(v)$. The representation $\rho$ then extends to a uniformly bounded linear representation $\rho^{\ast\ast}:G\rightarrow\text{GL}(V^{\ast\ast})$ via $\rho_{g}^{\ast\ast}(f):=f\circ\rho_{g^{-1}}^{\ast}$ for $g\in G$, $f\in V^{\ast\ast}$ . With respect to $\rho^{\ast\ast}$ the embedding $\iota_{V}$ is $G$-equivariant and we have that $\sup_{g\in G}\Vert\rho_{g}\Vert=\sup_{g\in G}\Vert\rho_{g}^{\ast}\Vert=\sup_{g\in G}\Vert\rho_{g}^{\ast\ast}\Vert$. By Goldstine's theorem the image of the closed unit ball in $V$ under $\iota_{V}$ is weak$^{\ast}$-dense in $V^{\ast\ast}$, so in particular $\text{im}(\iota_{V})\subseteq V^{\ast\ast}$ is weak$^{\ast}$-dense. If $G$ is a topological group, $\rho$ (or $(V,\rho)$) is called \emph{strongly operator continuous} if the map $G\ni g\mapsto\rho_{g}(v)$ is continuous with respect to the norm on $V$ for every $v\in V$. Similarly, $\rho$ (or $(V,\rho)$) is called \emph{weakly operator continuous} if the map $G\ni g\mapsto\rho_{g}(v)$ is continuous with respect to the weak topology on $V$ for every $v\in V$.

We denote the space of bounded operators on $V$, $V^{\ast}$ and $V^{\ast\ast}$ by $\mathcal{B}(V)$, $\mathcal{B}(V^{\ast})$ and $\mathcal{B}(V^{\ast\ast})$ respectively and endow these spaces with the corresponding operator norms. We can furthermore consider the \emph{point-weak$^{\ast}$ topology} on $\mathcal{B}(V^{\ast\ast})$ which is the locally convex Hausdorff topology induced by the family $\{p_{\phi,f}\}_{\phi\in V^{\ast},f\in V^{\ast\ast}}$ of seminorms with $p_{\phi,f}(T):=\vert\left(T(f)\right)(\phi)\vert$, i.e. $T_{\lambda}\rightarrow T$ for a net $(T_{\lambda})_{\lambda\in\Lambda}\subseteq\mathcal{B}(V^{\ast\ast})$ and $T\in\mathcal{B}(V^{\ast\ast})$ if and only if $\left(T_{\lambda}(f)\right)(\phi)\rightarrow\left(T(f)\right)(\phi)$ for every $f\in V^{\ast\ast}$, $\phi\in V^{\ast}$.

\vspace{3mm}


\subsection{Cohomology of groups}

Let $(V,\rho)$ be a uniformly bounded Banach $G$-module and assume that $\rho$ is strongly operator continuous. For $n\in\mathbb{N}$ denote the vector space of all continuous functions $f:G^{n}\rightarrow V$ by $C^{n}(G,V)$. Here $G^n$ is equipped with the product topology and $V$ with the norm topology. We endow $C^{n}(G,V)$ with the \emph{compact-open topology}; that is, the topology generated by the subbase of sets of the form $\{f\in C^{n}(G,V)\mid f(K)\subseteq U\}$, where $K\subseteq G^{n}$ is compact and $U\subseteq V$ is open. Note that in our particular setting, the compact-open topology coincides with the \emph{topology of uniform convergence on compact subsets}, see e.g. \cite[Theorem 9.21]{Stroppel}. The subspace $C^{n}(G,V)^{G}$ of elements $f\in C^{n}(G,V)$ that are \emph{$G$-equivariant} in the sense that $f(gg_{1},...,gg_{n})=\rho_{g}(f(g_{1},...,g_{n}))$ for all $g,g_{1},...,g_{n}\in G$ is closed in $C^{n}(G,V)$.

Consider the cochain complex 
\[
C^{0}(G,V)^{G}\overset{\partial^{0}}{\longrightarrow}C^{1}(G,V)^{G}\overset{\partial^{1}}{\longrightarrow}...\overset{\partial^{n-1}}{\longrightarrow}C^{n}(G,V)^{G}\overset{\partial^{n}}{\longrightarrow}...,
\]
where $\left(\partial^{n}f\right)(g_{1},...,g_{n+1}):=\sum_{i=1}^{n+1}(-1)^{i+1}f(g_{1},...,\widehat{g}_{i},...,g_{n+1})$, $n\in\mathbb{N}$ denote the\emph{ standard differentials}. The $\partial^{n}$ are continuous with respect to the compact-open topology and one checks that $\partial^{n+1}\circ\partial^{n}=0$ for all $n\in\mathbb{N}$. The subspace of \emph{$n$-cocycles} is given by $Z^{n}(G,\rho):=\ker(\partial^{n+1})\subseteq C^{n+1}(G,V)^{G}$, whereas $B^{n}(G,\rho):=\text{im}(\partial^{n})\subseteq C^{n+1}(G,V)^{G}$ denotes the subspace of \emph{coboundaries}. The quotient 
\[
H_{c}^{n}(G,\rho):=Z^{n}(G,\rho)/B^{n}(G,\rho)
\]
is called the \emph{$n$-th continuous cohomology group} of the representation. In general $B^{n}(G,\rho)$ need not be closed in $C^{n+1}(G,V)^{G}$. One thus also considers the \emph{$n$-th continuous reduced cohomology group} 
\[
\overline{H}_{c}^{n}(G,\rho):=Z^{n}(G,\rho)/\overline{B^{n}(G,\rho)},
\]
by replacing $B^{n}(G,\rho)$ by its closure $\overline{B^{n}(G,\rho)}$. If the representation $\rho$ is clear, we will also write $Z^{n}(G,V):=Z^{n}(G,\rho)$, $B^{n}(G,V):=B^{n}(G,\rho)$, $H_{c}^{n}(G,V):=H_{c}^{n}(G,\rho)$ and $\overline{H}_{c}^{n}(G,V):=\overline{H}_{c}^{n}(G,\rho)$ respectively.

Note that for $n=1$ the space $Z^{1}(G,\rho)$ identifies with $\{f\in C(G,V)\mid f(gh)=\rho_{g}(f(h))+f(g)\text{ for all }g,h\in G\}$ via $Z^{1}(G,\rho)\ni f\mapsto f(e,\cdot)$. In this picture, $B^{n}(G,\rho)$ is the subspace of functions of the form $G\ni g\mapsto v-\rho_{g}(v)$ where $v\in V$and $\overline{B^{n}(G,\rho)}$ is its closure with respect to the compact-open topology.

\vspace{3mm}


\section{Stationary elements in the bidual Banach space\label{StationaryElements}}

Let $G$ be a topological group and $\rho:G\rightarrow\text{GL}(V)$ a uniformly bounded linear representation of $G$ on a Banach space $V$. If $\rho$ is weakly operator continuous, the function $g\mapsto\phi(\rho_{g}(v))$ is Baire measurable and $\mu$-integrable for every Baire probability measure $\mu\in\mathcal{P}_{\sigma}(G)$ and $v\in V$, $\phi\in V^{\ast}$. One can hence define an element in $V^{\ast\ast}$ via 
\begin{equation}
V^{\ast}\rightarrow\mathbb{C}, \, \phi\mapsto\int_{G}\phi\left(\rho_{g}(v)\right)d\mu(g),\label{MarkovOperator}
\end{equation}
where the boundedness of the map follows from the closed graph theorem.

\begin{definition} \label{PettisIntegrableDefinition} Let $G$ be a topological group, $\rho:G\rightarrow\text{GL}(V)$ a uniformly bounded linear representation of $G$ on a Banach space $V$ and $\mu\in\mathcal{P}_{\sigma}(G)$ a Baire probability measure. We call $\rho$ \emph{$\mu$-integrable} if $\rho$ is weakly operator continuous and if for all $v\in V$ the element (\ref{MarkovOperator}) is contained in $\text{im}(\iota_{V})\subseteq V^{\ast\ast}$. We denote its preimage in $V$ by $\rho_{\mu}(v)$ and call the map $\rho_{\mu}:v\mapsto\rho_{\mu}(v)$ the \emph{Markov operator} on $V$ (with respect to $\mu$). For convenience we will often abbreviate by saying that $(V,\rho)$ is a \emph{$\mu$-integrable Banach $G$-module.}

We call $\rho$ \emph{Baire-Pettis integrable} (or simply \emph{BP-integrable}) if $\rho$ is $\mu$-integrable for all $\mu\in\mathcal{P}_{\sigma}(G)$. In this case we also say that $(V,\rho)$ is a \emph{BP-integrable Banach $G$-module.} \end{definition}

\begin{remark} \label{BPRemarks} \emph{(i)} Let $G$ be a topological group, $\rho:G\rightarrow\text{GL}(V)$ a weakly operator continuous uniformly bounded linear representation of $G$ on a Banach space $V$ and $\mu\in\mathcal{P}_{\sigma}(G)$. It follows from Pettis' measurability theorem (see \cite[Theorem 1.1]{Pettis}) that $\rho$ is $\mu$-integrable if for any $v\in V$ there exists a $\mu$-null set $\mathcal{N}\subseteq G$ such that $\{\rho_{g}(v)\mid g\in G\setminus\mathcal{N}\}\subseteq V$ is separable. In particular, $(V,\rho)$ will be BP-integrable if $V$ is separable or if $G$ is countable. Another instance where $\rho$ is BP-integrable is if the Banach space $V$ is reflexive.

\emph{(ii)} Given a topological group $G$, a Baire probability measure $\mu\in\mathcal{P}_{\sigma}(G)$, and a uniformly bounded $\mu$-integrable Banach $G$-module $(V,\rho)$, Lemma \ref{MeasureDensity} implies that the operator $\rho_{\mu}\in\mathcal{B}(V)$ is contained in the point-weak closed convex hull of $\{\rho_{g}\mid g\in G\}\subseteq\mathcal{B}(V)$. An application of the Hahn-Banach separation theorem furthermore implies that for every $v\in V$ the element $\rho_{\mu}(v)$ is contained in the norm closed convex hull $\mathcal{C}$ of $G.v:=\{\rho_{g}(v)\mid g\in G\}\subseteq V$. Hence, for every $g\in G$, 
\[
\rho_{\mu}(\rho_{g}(v))\in\overline{\text{conv}}^{\Vert\cdot\Vert}\{\rho_{hg}(v)\mid h \in G\}=\mathcal{C},
\]
so that, by the linearity and norm continuity of the Markov operator, $\mathcal{C}$ must be invariant under $\rho_{\mu}$. We inductively obtain that for finitely many Baire probability measures $\mu_{1},...,\mu_{i}\in\mathcal{P}_{\sigma}(G)$ the element $(\rho_{\mu_{1}}\circ...\circ\rho_{\mu_{i}})(v)$ is contained in $\mathcal{C}$. In particular, $\Vert(\rho_{\mu_{1}}\circ...\circ\rho_{\mu_{i}})(v)\Vert\leq M\Vert v\Vert$ for every $v\in V$, where $M:=\sup_{g\in G}\Vert\rho_{g}\Vert$. \end{remark}

We furthermore introduce bounded linear operators $\rho_{\mu}^{\ast}\in\mathcal{B}(V^{\ast})$ and $\rho_{\mu}^{\ast\ast}\in\mathcal{B}(V^{\ast\ast})$ via $\rho_{\mu}^{\ast}(\phi):=\phi\circ\rho_{\check{\mu}}$ and $\rho_{\mu}^{\ast\ast}(f):=f\circ\rho_{\check{\mu}}^{\ast}$ for $\phi\in V^{\ast}$, $f\in V^{\ast\ast}$, where $\check{\mu}\in\mathcal{P}_{\sigma}(G)$ is the \emph{symmetric opposite} of $\mu$ given by $\check{\mu}(E):=\mu(E^{-1})$ for $E\in\mathcal{B}a(G)$. Note that 
\begin{equation}
\left(\rho_{\mu}^{\ast\ast}(\widehat{v})\right)(\phi)=\widehat{v}\circ\rho_{\check{\mu}}^{\ast}(\phi)=\widehat{v}(\phi\circ\rho_{\mu})=\phi\circ\rho_{\mu}(v)=\widehat{\rho_{\mu}(v)}(\phi)\label{eq:Compatibility}
\end{equation}
for all $v\in V$, $\phi\in V^{\ast}$, i.e. $\rho_{\mu}^{\ast\ast}\circ\iota_{V}=\iota_{V}\circ\rho_{\mu}$.

\begin{proposition} \label{Existence} Let $G$ be a topological group, $\mu\in\mathcal{P}_{\sigma}(G)$ a Baire probability measure, and $(V,\rho)$ a uniformly bounded $\mu$-integrable Banach $G$-module. Then there exists a bounded linear operator $\mathbb{E}_{\mu}$ in the point-weak$^{\ast}$ closure of the convex hull \emph{$\text{conv}\{(\rho_{\mu}^{\ast\ast})^{i}\mid i\in\mathbb{N}\}\subseteq\mathcal{B}(V^{\ast\ast})$} satisfying $\rho_{\mu}^{\ast\ast}\mathbb{E}_{\mu}=\mathbb{E}_{\mu}=\mathbb{E}_{\mu}\rho_{\mu}^{\ast\ast}$, \emph{$\text{im}(\mathbb{E}_{\mu})=\{f\in V^{\ast\ast}\mid\rho_{\mu}^{\ast\ast}(f)=f\}$} and $\mathbb{E}_{\mu}^{2}=\mathbb{E}_{\mu}$. \end{proposition}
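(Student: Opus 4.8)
The plan is to run a Cesàro mean-ergodic argument in the point-weak$^{\ast}$ topology on $\mathcal{B}(V^{\ast\ast})$. Write $M:=\sup_{g\in G}\Vert\rho_{g}\Vert$. Two preliminary observations do the work. First, $\rho_{\mu}^{\ast\ast}$ is the bidual (double Banach adjoint) operator of $\rho_{\mu}$ — immediate from the defining formulas, using that $\check{\check{\mu}}=\mu$, or alternatively from the identity $\rho_{\mu}^{\ast\ast}\circ\iota_{V}=\iota_{V}\circ\rho_{\mu}$ of (\ref{eq:Compatibility}) together with the weak$^{\ast}$-density of $\iota_{V}(V)$ in $V^{\ast\ast}$; consequently $(\rho_{\mu}^{\ast\ast})^{i}$ is the bidual of $\rho_{\mu}^{i}$, so $\Vert(\rho_{\mu}^{\ast\ast})^{i}\Vert=\Vert\rho_{\mu}^{i}\Vert\leq M$ by Remark \ref{BPRemarks}(ii), and $\rho_{\mu}^{\ast\ast}$ is weak$^{\ast}$-continuous. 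This last fact implies that both left- and right-composition by $\rho_{\mu}^{\ast\ast}$ are continuous on $\mathcal{B}(V^{\ast\ast})$ for the point-weak$^{\ast}$ topology.

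Next I would form the Cesàro averages $A_{N}:=\frac{1}{N}\sum_{i=1}^{N}(\rho_{\mu}^{\ast\ast})^{i}\in\text{conv}\{(\rho_{\mu}^{\ast\ast})^{i}\mid i\in\mathbb{N}\}$, which satisfy $\Vert A_{N}\Vert\leq M$ for all $N$. The single non-routine input is that the ball $B_{M}:=\{T\in\mathcal{B}(V^{\ast\ast})\mid\Vert T\Vert\leq M\}$ is compact in the point-weak$^{\ast}$ topology. I would prove this by embedding $\mathcal{B}(V^{\ast\ast})$ into the product $\prod_{f\in V^{\ast\ast}}V^{\ast\ast}$, each factor carrying its weak$^{\ast}$ topology, via $T\mapsto(Tf)_{f}$: the image of $B_{M}$ lies in $\prod_{f}\big(M\Vert f\Vert\,B_{V^{\ast\ast}}\big)$, which is compact by Banach--Alaoglu and Tychonoff, and this image is closed there, since linearity of $f\mapsto Tf$ and the bound $\Vert Tf\Vert\leq M\Vert f\Vert$ are both point-weak$^{\ast}$-closed conditions. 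Hence $(A_{N})_{N}$ has a point-weak$^{\ast}$ cluster point; let $\mathbb{E}_{\mu}$ be the limit of a convergent subnet $(A_{N_{\lambda}})_{\lambda}$. Then $\mathbb{E}_{\mu}\in\mathcal{B}(V^{\ast\ast})$ with $\Vert\mathbb{E}_{\mu}\Vert\leq M$, and $\mathbb{E}_{\mu}$ lies in the point-weak$^{\ast}$ closure of $\text{conv}\{(\rho_{\mu}^{\ast\ast})^{i}\}$ because every $A_{N}$ does.

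Finally I would check the asserted identities by taking limits along the subnet. Since $\rho_{\mu}^{\ast\ast}$ commutes with its powers, $\rho_{\mu}^{\ast\ast}A_{N}=A_{N}\rho_{\mu}^{\ast\ast}=\frac{1}{N}\sum_{i=2}^{N+1}(\rho_{\mu}^{\ast\ast})^{i}$, so $\rho_{\mu}^{\ast\ast}A_{N}-A_{N}=A_{N}\rho_{\mu}^{\ast\ast}-A_{N}=\frac{1}{N}\big((\rho_{\mu}^{\ast\ast})^{N+1}-\rho_{\mu}^{\ast\ast}\big)$, which has norm at most $2M/N\to0$; the point-weak$^{\ast}$-continuity of composition by $\rho_{\mu}^{\ast\ast}$ then yields $\rho_{\mu}^{\ast\ast}\mathbb{E}_{\mu}=\mathbb{E}_{\mu}=\mathbb{E}_{\mu}\rho_{\mu}^{\ast\ast}$. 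From $\rho_{\mu}^{\ast\ast}\mathbb{E}_{\mu}=\mathbb{E}_{\mu}$ one gets $\text{im}(\mathbb{E}_{\mu})\subseteq\{f\in V^{\ast\ast}\mid\rho_{\mu}^{\ast\ast}(f)=f\}$; conversely, if $\rho_{\mu}^{\ast\ast}(f)=f$ then $(\rho_{\mu}^{\ast\ast})^{i}f=f$ for all $i$, so $A_{N}f=f$ for all $N$, so $\mathbb{E}_{\mu}f=f$, which gives the reverse inclusion. Since moreover $\rho_{\mu}^{\ast\ast}(\mathbb{E}_{\mu}g)=(\rho_{\mu}^{\ast\ast}\mathbb{E}_{\mu})g=\mathbb{E}_{\mu}g$ for every $g$, applying what was just shown to $f=\mathbb{E}_{\mu}g$ gives $\mathbb{E}_{\mu}(\mathbb{E}_{\mu}g)=\mathbb{E}_{\mu}g$, i.e. $\mathbb{E}_{\mu}^{2}=\mathbb{E}_{\mu}$.

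The step I expect to be the main obstacle is the point-weak$^{\ast}$ compactness of $B_{M}$ — concretely, getting the Tychonoff/Banach--Alaoglu embedding right and verifying that the image of $B_{M}$ is closed. Everything after that is a routine averaging manipulation; one should only keep in mind that convergence holds along a subnet of $(A_{N})$, so each ``norm $\to0$'' estimate must be read along that subnet.
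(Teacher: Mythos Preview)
Your proof is correct and runs along the same lines as the paper's: both obtain the uniform bound $\|(\rho_{\mu}^{\ast\ast})^{i}\|\leq M$ from Remark~\ref{BPRemarks}(ii) and both establish point-weak$^{\ast}$ compactness via the same Tychonoff-type product embedding. The one substantive difference is in how the fixed operator is produced: the paper applies the Tychonoff (Markov--Kakutani) fixed point theorem to the affine map $T\mapsto\rho_{\mu}^{\ast\ast}T$ on the compact convex set $\mathcal{C}$, whereas you take a point-weak$^{\ast}$ cluster point of the explicit Ces\`aro averages $A_{N}$ and read off $\rho_{\mu}^{\ast\ast}\mathbb{E}_{\mu}=\mathbb{E}_{\mu}$ from the telescoping estimate $\|\rho_{\mu}^{\ast\ast}A_{N}-A_{N}\|\leq 2M/N$. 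Your version is a hands-on unfolding of the fixed-point step and is marginally more elementary and constructive (it exhibits $\mathbb{E}_{\mu}$ as a cluster point of a concrete sequence); the paper's version is a line shorter by packaging this into a black-box theorem. The remaining verifications (image and idempotency) are argued identically in both.
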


\begin{proof} From Remark \ref{BPRemarks} (ii) it follows that every element $T$ in the point-weak$^{\ast}$ closure $\mathcal{C}$ of the convex hull of $\{(\rho_{\mu}^{\ast\ast})^{i}\mid i\in\mathbb{N}\}$ in $\mathcal{B}(V^{\ast\ast})$ satisfies $\Vert T\Vert\leq M$ where $M:=\sup_{g\in G}\Vert\rho_{g}\Vert$. Furthermore, the map 
\[
\mathcal{C}\rightarrow\prod_{f\in V^{\ast\ast}:\Vert f\Vert\leq1}\prod_{\phi\in V^{\ast}:\Vert\phi\Vert\leq1}\mathbb{C}, \, T\mapsto\left(\left(T(f)\right)(\phi)\right)_{f,\phi}
\]
is a homeomorphism onto its image, where the right-hand side is equipped with the product topology. One easily checks that the image of this map is closed and contained in the compact subset $\prod_{f\in V^{\ast\ast}:\Vert f\Vert\leq1}\prod_{\phi\in V^{\ast}:\Vert\phi\Vert\leq1}[-M,M]$. We obtain that $\mathcal{C}\subseteq\mathcal{B}(V^{\ast\ast})$ is point-weak$^{\ast}$ compact. The multiplication map $T\mapsto\rho_{\mu}^{\ast\ast}T$ maps $\mathcal{C}$ to itself with $\rho_{\mu}^{\ast\ast}T=T\rho_{\mu}^{\ast\ast}$ for every $T\in\mathcal{C}$ and is continuous with respect to the subspace topology. The Tychonoff fixed point theorem hence implies the existence of an element $\mathbb{E}_{\mu}\in\mathcal{C}$ with $\rho_{\mu}^{\ast\ast}\mathbb{E}_{\mu}=\mathbb{E}_{\mu}=\mathbb{E}_{\mu}\rho_{\mu}^{\ast\ast}$. It in particular follows that $\text{im}(\mathbb{E}_{\mu})\subseteq\{f\in V^{\ast\ast}\mid\rho_{\mu}^{\ast\ast}(f)=f\}$. Conversely, if $f\in V^{\ast\ast}$ with $\rho_{\mu}^{\ast\ast}(f)=f$, then $T(f)=f$ for every $T\in\text{conv}\{(\rho_{\mu}^{\ast\ast})^{i}\mid i\in\mathbb{N}\}$ via induction and hence also (by a continuity argument) $f=\mathbb{E}_{\mu}(f)\in\text{im}(\mathbb{E}_{\mu})$. By a similar argument we obtain that $\mathbb{E}_{\mu}^{2}=\mathbb{E}_{\mu}$. \end{proof}

For a Baire probability measure $\mu$ on $G$ and a uniformly bounded $\mu$-integrable Banach $G$-module $(V,\rho)$ we define the corresponding \emph{Laplacians} 
\begin{eqnarray*}
\Delta_{\mu} & := & (\text{id}_{V}-\rho_{\mu})\in\mathcal{B}(V),\\
\Delta_{\mu}^{\ast} & := & (\text{id}_{V^{\ast}}-\rho_{\mu}^{\ast})\in\mathcal{B}(V^{\ast}),\\
\Delta_{\mu}^{\ast\ast} & := & (\text{id}_{V^{\ast\ast}}-\rho_{\mu}^{\ast\ast})\in\mathcal{B}(V^{\ast\ast}).
\end{eqnarray*}

\begin{lemma} \label{Inclusion} Let $G$ be a topological group, $\mu\in\mathcal{P}_{\sigma}(G)$ a Baire probability measure, let $(V,\rho)$ be a uniformly bounded $\mu$-integrable Banach $G$-module, and $\mathbb{E}_{\mu}$ an operator as in Proposition \ref{Existence}. We then have that \emph{$\overline{\text{im}(\Delta_{\mu}^{\ast\ast})}^{\Vert\cdot\Vert}\subseteq\ker(\mathbb{E}_{\mu})$} and \emph{$\text{ker}(\mathbb{E}_{\mu})\cap\text{im}(\iota_{V})=\overline{\text{im}(\Delta_{\mu}^{\ast\ast})}^{\Vert\cdot\Vert}\cap\text{im}(\iota_{V})$}. \end{lemma}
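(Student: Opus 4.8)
The plan is to prove the two assertions of Lemma \ref{Inclusion} in turn, the first being the genuine content and the second following almost formally.

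\emph{First inclusion.} We must show $\overline{\text{im}(\Delta_{\mu}^{\ast\ast})}^{\Vert\cdot\Vert}\subseteq\ker(\mathbb{E}_{\mu})$. Since $\mathbb{E}_{\mu}$ is a bounded operator, hence norm continuous, and $\ker(\mathbb{E}_{\mu})$ is norm closed, it suffices to show $\text{im}(\Delta_{\mu}^{\ast\ast})\subseteq\ker(\mathbb{E}_{\mu})$, i.e. $\mathbb{E}_{\mu}(f-\rho_{\mu}^{\ast\ast}(f))=0$ for every $f\in V^{\ast\ast}$. But $\mathbb{E}_{\mu}\rho_{\mu}^{\ast\ast}=\mathbb{E}_{\mu}$ by Proposition \ref{Existence}, so $\mathbb{E}_{\mu}(f)-\mathbb{E}_{\mu}(\rho_{\mu}^{\ast\ast}(f))=\mathbb{E}_{\mu}(f)-\mathbb{E}_{\mu}(f)=0$. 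This is immediate; the only subtlety is making sure we are allowed to pass to the closure, which is exactly the continuity of $\mathbb{E}_{\mu}$.

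\emph{Second equality.} The inclusion "$\supseteq$" of $\text{ker}(\mathbb{E}_{\mu})\cap\text{im}(\iota_{V})=\overline{\text{im}(\Delta_{\mu}^{\ast\ast})}^{\Vert\cdot\Vert}\cap\text{im}(\iota_{V})$ follows at once by intersecting the first inclusion with $\text{im}(\iota_{V})$. For "$\subseteq$", take $\widehat{v}\in\ker(\mathbb{E}_{\mu})$ with $v\in V$. The point is to realize $\widehat{v}$ as a norm limit of elements of $\text{im}(\Delta_{\mu}^{\ast\ast})$. I would use the compatibility relation (\ref{eq:Compatibility}), namely $\rho_{\mu}^{\ast\ast}\circ\iota_{V}=\iota_{V}\circ\rho_{\mu}$, which gives $\Delta_{\mu}^{\ast\ast}(\widehat{w})=\widehat{\Delta_{\mu}(w)}=\widehat{w-\rho_{\mu}(w)}$ for all $w\in V$; thus $\iota_V(\text{im}(\Delta_\mu))\subseteq \text{im}(\Delta_\mu^{\ast\ast})$. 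Now $\mathbb{E}_{\mu}$ lies in the point-weak$^{\ast}$ closure of $\text{conv}\{(\rho_{\mu}^{\ast\ast})^{i}\mid i\in\mathbb{N}\}$; restricting to $\text{im}(\iota_V)$ and using (\ref{eq:Compatibility}) iteratively, each $(\rho_{\mu}^{\ast\ast})^{i}(\widehat v)=\widehat{\rho_{\mu}^{i}(v)}$, so for a convex combination $T=\sum_j t_j (\rho_\mu^{\ast\ast})^{i_j}$ one has $\widehat v - T(\widehat v) = \widehat{v - \sum_j t_j \rho_\mu^{i_j}(v)}$. Writing $v-\rho_\mu^{i}(v) = \sum_{k=0}^{i-1}\Delta_\mu(\rho_\mu^k(v)) \in \text{im}(\Delta_\mu)$, we see $\widehat v - T(\widehat v) \in \iota_V(\text{im}(\Delta_\mu)) \subseteq \text{im}(\Delta_\mu^{\ast\ast})$. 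Hence $\widehat v \in \widehat v - T(\widehat v) + T(\widehat v)$, and since $\mathbb{E}_\mu(\widehat v)=0$ we can choose (by the point-weak$^\ast$ closure defining $\mathbb{E}_\mu$) a net $T_\lambda$ of such convex combinations with $T_\lambda(\widehat v)\to \mathbb{E}_\mu(\widehat v)=0$ in the weak$^\ast$ topology; then $\widehat v - T_\lambda(\widehat v)\to \widehat v$ weak$^\ast$, and each $\widehat v - T_\lambda(\widehat v)\in \text{im}(\Delta_\mu^{\ast\ast})$, giving $\widehat v$ in the weak$^\ast$ closure of $\text{im}(\Delta_\mu^{\ast\ast})$.

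\emph{The main obstacle.} The difficulty is that the argument just sketched only places $\widehat v$ in the \emph{weak$^\ast$} closure of $\text{im}(\Delta_\mu^{\ast\ast})$, whereas the statement demands the \emph{norm} closure. To upgrade this I would invoke the fact that $\text{im}(\Delta_\mu^{\ast\ast})$, being the image of a bounded operator on a Banach space, is a convex subspace, so its norm closure and its weak closure coincide (Mazur's theorem); it then remains to bridge the gap between the weak$^\ast$ closure and the weak closure. Here I would argue on $V$ rather than $V^{\ast\ast}$: from $T_\lambda(\widehat v)\to 0$ weak$^\ast$ with $T_\lambda(\widehat v)=\widehat{w_\lambda}$ for $w_\lambda\in\text{conv}\{\rho_\mu^i(v)\}$, the net $(w_\lambda)$ is bounded (Remark \ref{BPRemarks}(ii)) and converges to $0$ weakly in $V$ (since weak$^\ast$ convergence on $\text{im}(\iota_V)$ pulls back to weak convergence in $V$); by Mazur we may pass to convex combinations $w'_\lambda$ of the $w_\lambda$ converging to $0$ in norm, and correspondingly $v - w'_\lambda \in v + \text{conv}\{-\rho_\mu^i(v)\} \subseteq \text{im}(\Delta_\mu) + \text{something}$ — more carefully, since each $v-w_\lambda\in\text{im}(\Delta_\mu)$ and $\text{im}(\Delta_\mu)$ is a subspace, each convex combination $v-w'_\lambda\in\text{im}(\Delta_\mu)$ as well, and $v-w'_\lambda\to v$ in norm. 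Applying $\iota_V$ gives $\widehat v\in\overline{\text{im}(\Delta_\mu^{\ast\ast})}^{\Vert\cdot\Vert}$, completing the proof. I would double-check that the boundedness needed for Mazur's argument is in place, which is guaranteed by Remark \ref{BPRemarks}(ii).
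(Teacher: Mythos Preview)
Your proof is correct. The first inclusion matches the paper exactly. For the reverse inclusion in the second assertion, you take a genuinely different route: you pull everything back to $V$ via the compatibility $\rho_\mu^{\ast\ast}\circ\iota_V=\iota_V\circ\rho_\mu$, observe that $v-\sum_j t_j\rho_\mu^{i_j}(v)\in\text{im}(\Delta_\mu)$ by telescoping, obtain a net in $\text{im}(\Delta_\mu)$ converging weakly to $v$, and then invoke Mazur to upgrade to norm convergence. The paper instead argues dually: it takes an arbitrary $\phi\in V^{\ast\ast\ast}$ vanishing on $\overline{\text{im}(\Delta_\mu^{\ast\ast})}$, notes that $\phi\circ\iota_V\in V^\ast$ is $\rho_\mu$-invariant, so $\widehat{v}(\phi\circ\iota_V)=(T(\widehat{v}))(\phi\circ\iota_V)$ for every $T$ in the relevant point-weak$^\ast$ closed convex hull, and in particular $\phi(\widehat{v})=(\mathbb{E}_\mu(\widehat{v}))(\phi\circ\iota_V)=0$; Hahn--Banach then gives $\widehat{v}\in\overline{\text{im}(\Delta_\mu^{\ast\ast})}$. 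Your approach is more hands-on and in fact yields the slightly sharper conclusion $v\in\overline{\text{im}(\Delta_\mu)}$ inside $V$; the paper's argument is shorter and avoids the telescoping bookkeeping. One minor remark: Mazur's theorem (weak closure of a convex set equals its norm closure) does not require boundedness, so your closing concern about Remark~\ref{BPRemarks}(ii) is unnecessary.
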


\begin{proof} The first inclusion follows from $\rho_{\mu}^{\ast\ast}\mathbb{E}_{\mu}=\mathbb{E}_{\mu}=\mathbb{E}_{\mu}\rho_{\mu}^{\ast\ast}$, as for every $f\in V^{\ast\ast}$ 
\[
\mathbb{E}_{\mu}(\Delta_{\mu}^{\ast\ast}(f))=\mathbb{E}_{\mu}(f)-\mathbb{E}_{\mu}(\rho_{\mu}^{\ast\ast}(f))=\mathbb{E}_{\mu}(f)-\mathbb{E}_{\mu}(f)=0.
\]
By the boundedness of $\mathbb{E}_{\mu}$ it can then be deduced that $\mathbb{E}_{\mu}(f)=0$ for all $f\in\overline{\text{im}(\Delta_{\mu}^{\ast\ast})}^{\Vert\cdot\Vert}$. This also implies that $\overline{\text{im}(\Delta_{\mu}^{\ast\ast})}^{\Vert\cdot\Vert}\cap\text{im}(\iota_{V})\subseteq\ker(\mathbb{E}_{\mu})\cap\text{im}(\iota_{V})$.

For the inclusion $\ker(\mathbb{E}_{\mu})\cap\text{im}(\iota_{V})\subseteq\overline{\text{im}(\Delta_{\mu}^{\ast\ast})}^{\Vert\cdot\Vert}\cap\text{im}(\iota_{V})$ let $\phi$ be an arbitrary non-trivial linear functional on $V^{\ast\ast}$ that vanishes on $\overline{\text{im}(\Delta_{\mu}^{\ast\ast})}^{\Vert\cdot\Vert}$. It follows that $\phi(f)=\phi(\rho_{\mu}^{\ast\ast}(f))$ for every $f\in V^{\ast\ast}$. We have $\phi\circ\iota_{V}\in V^{\ast}$ and then also $\widehat{v}(\phi\circ\iota_{V})=\left(T(\widehat{v})\right)(\phi\circ\iota_{V})$ for every $v\in V$, $T\in\text{conv}\{(\rho_{\mu}^{\ast\ast})^{i}\mid i\in\mathbb{N}\}$. By a continuity argument we obtain that $\widehat{v}(\phi\circ\iota_{V})=\left(T(\widehat{v})\right)(\phi\circ\iota_{V})$ for every $T$ in the point-weak$^{\ast}$ closure of the convex hull. In particular, if $\widehat{v}\in\ker(\mathbb{E}_{\mu})\cap\text{im}(\iota_{V})$, 
\[
\phi(\widehat{v})=\widehat{v}(\phi\circ\iota_{V})=\left(\mathbb{E}_{\mu}(\widehat{v})\right)(\phi\circ\iota_{V})=0.
\]
By the Hahn-Banach theorem (and since $\phi\in V^{\ast\ast\ast}$ with $\phi|_{\overline{\text{im}(\Delta_{\mu}^{\ast\ast})}^{\Vert\cdot\Vert}}\equiv0$ was arbitrary), this means that $\widehat{v}\in\overline{\text{im}(\Delta_{\mu}^{\ast\ast})}^{\Vert\cdot\Vert}$ and therefore $\ker(\mathbb{E}_{\mu})\cap\text{im}(\iota_{V})\subseteq\overline{\text{im}(\Delta_{\mu}^{\ast\ast})}^{\Vert\cdot\Vert}\cap\text{im}(\iota_{V})$. \end{proof}

The lemma implies an ergodic theorem for elements in $\text{im}(\iota_{V})$ whose image under $\mathbb{E}_{\mu}$ is again contained in $\text{im}(\iota_{V})$.

\begin{theorem} \label{ErgodicTheorem} Let $G$ be a topological group, $\mu\in\mathcal{P}_{\sigma}(G)$ a Baire probability measure, let $(V,\rho)$ be a uniformly bounded $\mu$-integrable Banach $G$-module, and $\mathbb{E}_{\mu}$ an operator as in Proposition \ref{Existence}. For every $v\in V$ with \emph{$\mathbb{E}_{\mu}(\widehat{v})\in\text{im}(\iota_{V})$} we then have that 
\[
\mathbb{E}_{\mu}(\widehat{v})=\lim_{n\rightarrow\infty}\frac{1}{n}\sum_{i=0}^{n-1}(\rho_{\mu}^{\ast\ast})^{i}(\widehat{v}),
\]
where the sequence converges in norm. \end{theorem}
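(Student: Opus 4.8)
The plan is to run the Yosida--Kakutani mean ergodic argument for the power-bounded operator $T:=\rho_{\mu}^{\ast\ast}$ on $V^{\ast\ast}$, splitting $\widehat{v}$ into a $T$-fixed part and a part lying in $\overline{\text{im}(\Delta_{\mu}^{\ast\ast})}^{\Vert\cdot\Vert}$, and invoking Lemma \ref{Inclusion} to identify the latter with $\ker(\mathbb{E}_{\mu})\cap\text{im}(\iota_{V})$. Write $A_{n}:=\frac1n\sum_{i=0}^{n-1}T^{i}$, which is a convex combination of the operators $(\rho_{\mu}^{\ast\ast})^{i}$, $i\in\mathbb{N}$, and set $w:=\mathbb{E}_{\mu}(\widehat{v})$, which by hypothesis lies in $\text{im}(\iota_{V})$. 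As recorded in the proof of Proposition \ref{Existence}, every element of the point-weak$^{\ast}$ closed convex hull of $\{(\rho_{\mu}^{\ast\ast})^{i}\mid i\in\mathbb{N}\}$ has operator norm at most $M:=\sup_{g\in G}\Vert\rho_{g}\Vert$; in particular $\Vert A_{n}\Vert\leq M$ for all $n$.

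First I would observe that $w$ is $T$-fixed: since $\rho_{\mu}^{\ast\ast}\mathbb{E}_{\mu}=\mathbb{E}_{\mu}$ we have $T(w)=\rho_{\mu}^{\ast\ast}(\mathbb{E}_{\mu}(\widehat{v}))=\mathbb{E}_{\mu}(\widehat{v})=w$, whence $A_{n}(w)=w$ for every $n$, and the claim reduces to showing $A_{n}(\widehat{v}-w)\to0$ in norm. Next I would locate $\widehat{v}-w$: since $\mathbb{E}_{\mu}$ is idempotent, $\mathbb{E}_{\mu}(w)=\mathbb{E}_{\mu}^{2}(\widehat{v})=\mathbb{E}_{\mu}(\widehat{v})=w$, so $\mathbb{E}_{\mu}(\widehat{v}-w)=0$; moreover both $\widehat{v}$ (by definition of $\iota_{V}$) and $w$ (by hypothesis) lie in $\text{im}(\iota_{V})$. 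Hence $\widehat{v}-w\in\ker(\mathbb{E}_{\mu})\cap\text{im}(\iota_{V})$, and Lemma \ref{Inclusion} yields $\widehat{v}-w\in\overline{\text{im}(\Delta_{\mu}^{\ast\ast})}^{\Vert\cdot\Vert}$.

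It then remains to prove that $A_{n}\to0$ pointwise on $\overline{\text{im}(\Delta_{\mu}^{\ast\ast})}^{\Vert\cdot\Vert}$. On an element of the form $\Delta_{\mu}^{\ast\ast}(f)=f-T(f)$ the Cesàro sum telescopes, $\sum_{i=0}^{n-1}T^{i}(f-T(f))=f-T^{n}(f)$, so $\Vert A_{n}(\Delta_{\mu}^{\ast\ast}(f))\Vert\leq\frac1n(1+M)\Vert f\Vert\to0$. For a general $u\in\overline{\text{im}(\Delta_{\mu}^{\ast\ast})}^{\Vert\cdot\Vert}$ and $\varepsilon>0$, pick $u'\in\text{im}(\Delta_{\mu}^{\ast\ast})$ with $\Vert u-u'\Vert<\varepsilon$; then $\limsup_{n}\Vert A_{n}(u)\Vert\leq\limsup_{n}\Vert A_{n}(u')\Vert+M\varepsilon=M\varepsilon$, and letting $\varepsilon\to0$ gives $A_{n}(u)\to0$. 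Applying this to $u=\widehat{v}-w$ and adding back $A_{n}(w)=w$ yields $A_{n}(\widehat{v})\to w=\mathbb{E}_{\mu}(\widehat{v})$, as claimed.

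I do not anticipate a genuine obstacle here: the argument is the classical mean ergodic theorem for a power-bounded operator, and the only delicate point is conceptual rather than computational, namely that the hypothesis $\mathbb{E}_{\mu}(\widehat{v})\in\text{im}(\iota_{V})$ is used precisely to place $\widehat{v}-w$ in the intersection $\ker(\mathbb{E}_{\mu})\cap\text{im}(\iota_{V})$, where Lemma \ref{Inclusion} identifies $\ker(\mathbb{E}_{\mu})$ with $\overline{\text{im}(\Delta_{\mu}^{\ast\ast})}^{\Vert\cdot\Vert}$; the bare relation $\mathbb{E}_{\mu}(\widehat{v}-w)=0$ would not by itself permit the telescoping estimate.
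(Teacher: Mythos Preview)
Your proof is correct and follows essentially the same route as the paper's: both split $\widehat{v}$ as $\mathbb{E}_{\mu}(\widehat{v})+(\widehat{v}-\mathbb{E}_{\mu}(\widehat{v}))$, use Lemma \ref{Inclusion} together with the hypothesis $\mathbb{E}_{\mu}(\widehat{v})\in\text{im}(\iota_{V})$ to place the second summand in $\overline{\text{im}(\Delta_{\mu}^{\ast\ast})}^{\Vert\cdot\Vert}$, and then handle that closure via the telescoping identity plus uniform boundedness of the Ces\`aro means. Your closing remark correctly isolates the role of the hypothesis.
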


\begin{proof} Note that for every $f\in V^{\ast\ast}$ 
\[
\frac{1}{n}\sum_{i=0}^{n-1}(\rho_{\mu}^{\ast\ast})^{i}\left(\Delta_{\mu}^{\ast\ast}(f)\right)=\frac{1}{n}\sum_{i=0}^{n-1}(\rho_{\mu}^{\ast\ast})^{i}(f)-\frac{1}{n}\sum_{i=1}^{n}(\rho_{\mu}^{\ast\ast})^{i}(f)=\frac{1}{n}(\text{id}_{V^{\ast\ast}}-(\rho_{\mu}^{\ast\ast})^{n})(f),
\]
which norm converges to zero, so by continuity $\frac{1}{n}\sum_{k=0}^{n-1}\rho_{\mu^{k}}^{\ast\ast}(f)\rightarrow0$ for every $f\in\overline{\text{im}(\Delta_{\mu}^{\ast\ast})}^{\Vert\cdot\Vert}$. For $v\in V$ with $\mathbb{E}_{\mu}(\widehat{v})\in\text{im}(\iota_{V})$ we have that $\widehat{v}-\mathbb{E}_{\mu}(\widehat{v})\in\ker(\mathbb{E}_{\mu})\cap\text{im}(\iota_{V})$. In combination with Lemma \ref{Inclusion} this gives 
\[
\frac{1}{n}\sum_{i=0}^{n-1}(\rho_{\mu}^{\ast\ast})^{i}(\widehat{v})-\mathbb{E}_{\mu}(\widehat{v})=\frac{1}{n}\sum_{i=0}^{n-1}(\rho_{\mu}^{\ast\ast})^{i}(\widehat{v}-\mathbb{E}_{\mu}(\widehat{v}))\rightarrow0.
\]
This finishes the proof. \end{proof}

For a Banach $G$-module $(V,\rho)$ we write $V^{G}$ for the corresponding \emph{subspace of fixed points} under $\rho$. Furthermore, denote the distance of an element $v\in V$ to a closed subspace $W\subseteq V$ by $\text{dist}(v,W)$, i.e. $\text{dist}(v,W):=\inf\{\Vert v-w\Vert\mid w\in W\}$. \\

The proof of the following theorem uses the argument in \cite[Theorem 5.1]{HartmanKalantar} (which again relies on the proof of Furstenberg's conjecture by Kaimanovich and Vershik in \cite[Theorem 4.3]{KaimanovichVershik}). We will make use of it in the later sections. We do not know whether the statement generalizes to non-separable Banach spaces.

\begin{theorem} \label{FixedPointApproximation} Let $G$ be a topological group and $(V,\rho)$ a uniformly bounded weakly operator continuous Banach $G$-module with $V$ being separable (so that $(V,\rho)$ is in particular BP-integrable). Furthermore, assume that for any $\varepsilon>0$, $v\in V$ there exists a Baire probability measure $\mu\in\mathcal{P}_{\sigma}(G)$ such that \emph{$\text{dist}\left(\rho_{\mu}(v),V^{G}\right)<\varepsilon$}. Then there also exists $\mu\in\mathcal{P}_{\sigma}(G)$ with $\mathbb{E}_{\mu}(\widehat{v})\in(V^{\ast\ast})^{G}$ for every $v\in V$, where $\mathbb{E}_{\mu}$ is an operator as in Proposition \ref{Existence}. \end{theorem}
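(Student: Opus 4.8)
The plan is to produce the desired measure $\mu$ as a suitable countable convex combination of measures supplied by the hypothesis, so that $\rho_\mu$ (and hence its iterates) push every element arbitrarily close to the fixed-point subspace, and then combine this with the ergodic-type statements of Lemma \ref{Inclusion} and Theorem \ref{ErgodicTheorem}. Since $V$ is separable, fix a countable dense subset $\{v_k\}_{k\in\mathbb N}\subseteq V$ (say in the unit ball). Using the hypothesis repeatedly, I would inductively choose measures $\nu_{k}\in\mathcal P_\sigma(G)$ with $\mathrm{dist}(\rho_{\nu_k}(v_k),V^G)$ very small, and assemble $\mu:=\sum_{k}2^{-k}\nu_k\in\mathcal P_\sigma(G)$ (a Baire probability measure, since the sum converges in the total-variation sense on $C_b(G,\mathbb R)$). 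The Markov operator of a convex combination of measures is the corresponding convex combination of Markov operators; more importantly, because $V^G$ is $\rho_g$-invariant for every $g$ and $\rho_\mu$ lies in the norm-closed convex hull of $\{\rho_g\}$ (Remark \ref{BPRemarks}(ii)), the subspace $V^G$ is $\rho_\mu$-invariant, and one checks that $\mathrm{dist}(\rho_\mu v, V^G)\le \sup_{w\in V^G}$-type estimates propagate: in fact $\rho_{\nu}\rho_{\nu'} = \rho_{\nu*\nu'}$ for convolution, and one can arrange that composing Markov operators only decreases the distance to $V^G$ since $\rho_g$ preserves $V^G$ and has norm $\le M$.

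The key mechanism I would exploit is: for this $\mu$, and for every $v\in V$, the orbit closure argument forces $\mathbb E_\mu(\widehat v)$ to be $\rho^{**}_g$-fixed for all $g\in G$, i.e. to lie in $(V^{**})^G$. Concretely, $\mathbb E_\mu$ lies in the point-weak$^*$ closed convex hull of $\{(\rho_\mu^{**})^i\}$, hence $\mathbb E_\mu(\widehat v)$ lies in the point-weak$^*$ closure of the convex hull of $\{(\rho_\mu^{**})^i(\widehat v)\}\subseteq \iota_V(\mathcal C)$ where $\mathcal C=\overline{\mathrm{conv}}^{\|\cdot\|}(G.v)$. By the construction of $\mu$ from the $\nu_k$, for each $k$ the component $\nu_k$ of $\mu$ forces $\rho_\mu(v_k)$ — and by convexity/invariance arguments the higher iterates $(\rho_\mu)^i(v_k)$ — to be within a small distance of $V^G$; passing to limits and using density of $\{v_k\}$ together with the uniform bound $\|(\rho_\mu^{**})^i\|\le M$, one concludes $\mathbb E_\mu(\widehat v)\in \overline{\iota_V(V^G)}^{w^*}$. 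Since $V^G$ is the fixed space of the strongly continuous representation and $\iota_V$ is $G$-equivariant, its weak$^*$-closure in $V^{**}$ is contained in $(V^{**})^G$, giving the claim.

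The main obstacle, and the step requiring the most care, is the passage from "$\rho_\mu$ pushes each $v_k$ close to $V^G$" to "$\mathbb E_\mu(\widehat v)$ is genuinely $\rho^{**}_g$-fixed for all $g$, for all $v$" — i.e. controlling the behaviour of the \emph{iterates} $(\rho_\mu^{**})^i$ and of the full point-weak$^*$ closure, not just of $\rho_\mu$ itself, and upgrading from the countable dense set to all of $V$. This is exactly where separability is used (so that a single $\mu$ handles all of $V$ via a diagonal argument) and where I expect the Kaimanovich–Vershik / Hartman–Kalantar argument from \cite[Theorem 4.3]{KaimanovichVershik}, \cite[Theorem 5.1]{HartmanKalantar} to be invoked: one shows that the sequence of Cesàro averages $\frac1n\sum_{i<n}(\rho_\mu^{**})^i(\widehat v)$ is eventually norm-close to $\iota_V(V^G)$, so any point-weak$^*$ cluster point — in particular $\mathbb E_\mu(\widehat v)$, by Theorem \ref{ErgodicTheorem} when it lands in $\iota_V(V)$ and by a direct cluster-point argument otherwise — sits in the weak$^*$ closure of $\iota_V(V^G)\subseteq (V^{**})^G$. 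The remaining verifications (that $\mu$ is a well-defined Baire probability measure, that $V^G$ is $\rho_\mu$-invariant, that distances do not blow up under iteration) are routine given Remark \ref{BPRemarks}(ii).
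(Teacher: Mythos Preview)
Your overall architecture is the right one and matches the paper's: build $\mu$ as a countable convex combination $\sum_i 2^{-i}\mu_i$ of measures supplied by the hypothesis, use separability to reduce to a dense sequence $(v_i)$, show that the iterates $\rho_\mu^n(v)$ drift into $V^G$, and conclude by a weak$^*$ compactness argument that $\mathbb{E}_\mu(\widehat v)\in(V^{**})^G$. The final passage you sketch (weak$^*$ closure of $\iota_V(V^G)$ lies in $(V^{**})^G$ because $\rho_g^{**}$ is weak$^*$-continuous) is correct, and indeed one does not even need Theorem~\ref{ErgodicTheorem} or Ces\`aro averages here: it is the iterates $\rho_\mu^n(v)$ themselves that approach $V^G$.

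However, the inductive choice you actually write down --- ``choose $\nu_k$ with $\mathrm{dist}(\rho_{\nu_k}(v_k),V^G)$ very small'' --- is not enough, and this is precisely the content of the Kaimanovich--Vershik step you defer to. With your choice one has $\rho_\mu(v_k)=\sum_j 2^{-j}\rho_{\nu_j}(v_k)$, and only the single summand $j=k$ is controlled; the remaining mass $1-2^{-k}$ sits on terms $\rho_{\nu_j}(v_k)$ about which the hypothesis says nothing. Your auxiliary claim that ``composing Markov operators only decreases the distance to $V^G$'' is also not correct in general: since $\rho_\nu$ preserves $V^G$ and has norm at most $M$, one only gets $\mathrm{dist}(\rho_\nu w,V^G)\le M\,\mathrm{dist}(w,V^G)$, so distances can grow under iteration when $M>1$.

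The missing idea, which is the heart of the paper's proof, is twofold. First, at stage $i$ one must choose $\mu_i$ so that $\rho_{\mu_i}$ pushes close to $V^G$ not merely $v_i$, but \emph{every} vector of the form $\rho_{\mu_{j_k}}\cdots\rho_{\mu_{j_1}}(v_j)$ with $j,j_1,\dots,j_k<i$ and $k$ below a prescribed threshold $n_i$; this is a finite list, so the hypothesis (applied iteratively) furnishes such a $\mu_i$. Second, one fixes integers $n_i$ growing fast enough that $(\sum_{j\le i}2^{-j})^{n_i}<2^{-i}$. Expanding $\rho_\mu^{n_i}=\bigl(\sum_j 2^{-j}\rho_{\mu_j}\bigr)^{n_i}$ as a sum over tuples $(j_1,\dots,j_{n_i})$, the tuples with all indices $\le i$ carry total weight below $2^{-i}$ and can be discarded; every remaining tuple has a largest position $k$ with $j_k>i$, and then $\rho_{\mu_{j_k}}$ acts on a composition of earlier $\rho_{\mu_{j_l}}$'s applied to $v_i$, which is exactly one of the vectors $\mu_{j_k}$ was chosen to handle. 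This yields $\mathrm{dist}(\rho_\mu^{n}(v),V^G)\to0$ for every $v$, after which one picks $w_n\in V^G$ with $\|\rho_\mu^n(v)-w_n\|\to0$, passes to a weak$^*$ cluster point $f$ of $(\widehat{w_n})$, and checks $\mathbb{E}_\mu(\widehat v)=f\in(V^{**})^G$ using $\mathbb{E}_\mu\circ\rho_\mu^{**}=\mathbb{E}_\mu$. Without this two-part mechanism the argument does not go through.
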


\begin{proof} Let $(v_{i})_{i\in\mathbb{N}}\subseteq V$ be a sequence for which $\{v_{1},v_{2},...\}$ is dense in the unit ball of $V$. Furthermore, fix an arbitrary sequence $(n_{i})_{i\in\mathbb{N}}\subseteq\mathbb{N}_{\geq1}$ with $(\sum_{j=1}^{i}2^{-j})^{n_{i}}<2^{-i}$ for all $i\in\mathbb{N}_{\geq1}$. By our assumption we can inductively choose Baire probability measures $(\mu_{i})_{i\in\mathbb{N}}\in\mathcal{P}_{\sigma}(G)$ such that for all $i\in\mathbb{N}_{\geq1}$, $1\leq j,j_{1},...,j_{k}<i$ and $k<n_{i}$ the inequality 
\[
\text{dist}\left(\rho_{\mu_{i}}\rho_{\mu_{j_{k}}}...\rho_{\mu_{j_{1}}}(v_{j}),V^{G}\right)<2^{-i}
\]
holds. We claim that $\mu:=\sum_{i=1}^{\infty}2^{-i}\mu_{i}\in\mathcal{P}_{\sigma}(G)$ satisfies $\text{dist}\left(\rho_{\mu}^{i}(v),V^{G}\right)\rightarrow0$ for every $v\in V$. Indeed, for $v\in V$ with $\Vert v\Vert\leq1$ we find $i\in\mathbb{N}$ with $\Vert v-v_{i}\Vert<2^{-i}$. Then, by Remark \ref{BPRemarks} (ii), 
\begin{eqnarray*}
\text{dist}\left(\rho_{\mu}^{n_{i}}(v),V^{G}\right) & \leq & \Vert\rho_{\mu}^{n_{i}}(v)-\rho_{\mu}^{n_{i}}(v_{i})\Vert+\text{dist}\left(\rho_{\mu}^{n_{i}}(v_{i}),V^{G}\right)\\
 & \leq & 2^{-i}M+\text{dist}\left(\rho_{\mu}^{n_{i}}(v_{i}),V^{G}\right),
\end{eqnarray*}
where $M:=\sup_{g\in G}\Vert\rho_{g}\Vert$. The expression $\text{dist}\left(\rho_{\mu}^{n_{i}}(v_{i}),V^{G}\right)$ can be estimated via 
\begin{eqnarray*}
\text{dist}\left(\rho_{\mu}^{n_{i}}(v_{i}),V^{G}\right) & = & \text{dist}\left(\sum_{j_{1},...,j_{n_{i}}=1}^{\infty}\frac{1}{2^{j_{1}+...+j_{n_{i}}}}\rho_{\mu_{j_{1}}}...\rho_{\mu_{j_{n_{i}}}}(v_{i}),V^{G}\right)\\
 & \leq & \sum_{\max(j_{1},...,j_{n_{i}})\leq i}\frac{1}{2^{j_{1}+...+j_{n_{i}}}}\left\Vert \rho_{\mu_{j_{1}}}...\rho_{\mu_{j_{n_{i}}}}(v_{i})\right\Vert \\
 &  & \qquad+\sum_{\max(j_{1},...,j_{n_{i}})>i}\frac{1}{2^{j_{1}+...+j_{n_{i}}}}\text{dist}\left(\rho_{\mu_{j_{1}}}...\rho_{\mu_{j_{n_{i}}}}(v_{i}),V^{G}\right)\\
 & \leq M & \left(\sum_{j=1}^{i}2^{-j}\right)^{n_{i}}\left\Vert v_{i}\right\Vert +\sum_{\max(j_{1},...,j_{n_{i}})>i}\frac{1}{2^{j_{1}+...+j_{n_{i}}}}\text{dist}\left(\rho_{\mu_{j_{1}}}...\rho_{\mu_{j_{n_{i}}}}(v_{i}),V^{G}\right).
\end{eqnarray*}
For every tuple $(j_{1},...,j_{n_{i}})$ with $\max(j_{1},...,j_{n_{i}})>i$ there exists a largest integer $1\leq k\leq n_{i}$ for which $j_{k}>i$. Hence, by the choice of the sequence $(\mu_{j})_{j\in\mathbb{N}}\in\mathcal{P}_{\sigma}(G)$, 
\[
\text{dist}\left(\rho_{\mu_{j_{1}}}...\rho_{\mu_{j_{n_{i}}}}(v_{i}),V^{G}\right)\leq M\text{dist}\left(\rho_{\mu_{j_{k}}}...\rho_{\mu_{j_{n_{i}}}}(v_{i}),V^{G}\right)<2^{-j_{k}}M<2^{-i}M.
\]
Here the first inequality follows from the fact that the representation $\rho$ induces a (well-defined) linear representation $\widetilde{\rho}:G\rightarrow\text{GL}(V/V^{G})$ on the quotient space with uniform bound $M$. With $(\sum_{j=1}^{i}2^{-j})^{n_{i}}<2^{-i}$ and $\Vert v_{i}\Vert\leq1$ we obtain $\text{dist}\left(\rho_{\mu}^{n_{i}}(v_{i}),V^{G}\right)<2^{-i}(1+M)$, thus 
\[
\text{dist}\left(\rho_{\mu}^{n_{i}}(v),V^{G}\right)<2^{-i}(1+2M).
\]
For $n>n_{i}$ this implies 
\[
\text{dist}\left(\rho_{\mu}^{n}(v),V^{G}\right)\leq M\text{dist}\left(\rho_{\mu}^{n_{i}}(v),V^{G}\right)\leq2^{-i}M(1+2M)
\]
and therefore $\text{dist}\left(\rho_{\mu}^{n}(v),V^{G}\right)\rightarrow0$. We can hence choose a sequence $(w_{n})_{n\in\mathbb{N}}\subseteq V^{G}$ with $\Vert\rho_{\mu}^{n}(v)-w_{n}\Vert\rightarrow0$. By Remark \ref{BPRemarks} (ii) we have $\Vert\rho_{\mu}^{n}(v)\Vert\leq M\Vert v\Vert$ for every $n\in\mathbb{N}$. This gives that the sequence $(\widehat{w_{n}})_{n\in\mathbb{N}}\subseteq V^{\ast\ast}$ is bounded so that we can go over to a subnet $(\widehat{w_{n_{\lambda}}})_{\lambda\in\Lambda}$ with weak$^{\ast}$-limit $f\in V^{\ast\ast}$. It is easy to check that $f\in(V^{\ast\ast})^{G}$ and hence 
\[
\mathbb{E}_{\mu}(\widehat{v})-f=\lim_{\lambda}(\mathbb{E}_{\mu}\circ\iota_{V})\left(\rho_{\mu}^{n_{\lambda}}(v)-w_{n_{\lambda}}\right)=0.
\]
We obtain that $\mathbb{E}_{\mu}(\widehat{v})=f\in(V^{\ast\ast})^{G}$, as claimed. \end{proof}

\begin{remark} \emph{(i)} If $V^{G}$ is finite-dimensional, one can employ the norm compactness of the unit ball to deduce that, under the same assumptions as in Theorem \ref{FixedPointApproximation}, $\mathbb{E}_{\mu}(\widehat{v})\in\iota_{V}(V^{G})$ for every $v\in V$.

\emph{(ii)} Let $G$ be a topological group and $(V,\rho)$ a uniformly bounded BP-integrable Banach $G$-module. Assume that $V^{\ast}$ is separable and that for any $\varepsilon>0$, $\phi\in V^{\ast}$ there exists a Baire probability measure $\mu\in\mathcal{P}_{\sigma}(G)$ such that $\text{dist}\left(\rho_{\mu}^{\ast}(\phi),(V^{\ast})^{G}\right)<\varepsilon$. A simple modification of the proof of Theorem \ref{FixedPointApproximation} then implies that there exists $\mu\in\mathcal{P}_{\sigma}(G)$ with $\mathbb{E}_{\mu}(\widehat{v})\in(V^{\ast\ast})^{G}$ for every $v\in V$, where $\mathbb{E}_{\mu}$ is an operator as in Proposition \ref{Existence}. \end{remark}

\begin{example} A unital C$^{\ast}$-algebra $A$ with center $\mathcal{Z}(A):=\{a\in A\mid ab=ba\text{ for all }b\in A\}$ is said to have the \emph{Dixmier property} if for every $a\in A$ the norm closure of the convex hull of $\{uau^{\ast}\mid u\in\mathcal{U}(A)\}$ intersects $\mathcal{Z}(A)$ non-trivially. Here $\mathcal{U}(A)$ denotes the unitary group of $A$. If we equip $\mathcal{U}(A)$ with the restriction of the weak topology on $A$, we obtain a weakly operator continuous uniformly bounded linear representation $\rho:\mathcal{U}(A)\rightarrow\text{Aut}(A)\subseteq\text{GL}(A)$ via $\rho_{u}(a):=uau^{\ast}$. Note that $\mathcal{Z}(A)=A^{\mathcal{U}(A)}$ so that, in the case where $A$ is separable and has the Dixmier property, Theorem \ref{FixedPointApproximation} implies the existence of a Baire probability measure $\mu\in\mathcal{P}_{\sigma}(\mathcal{U}(A))$ with $\mathbb{E}_{\mu}(\widehat{a})\in(A^{\ast\ast})^{\mathcal{U}(A)}$ for every $a\in A$, with $\mathbb{E}_{\mu}$ as in Proposition \ref{Existence}. This class of examples in particular covers all separable simple unital C$^{\ast}$-algebras with at most one tracial state, see \cite{HaagerupSzido}. \end{example}

In a similar spirit, we obtain the following result.

Given a Baire probability measure $\mu$ on a topological group $G$ denote the closed subgroup of $G$ generated by the set $\Sigma_{\mu}(G)$ of elements $g\in G$ for which $\int_{G}fd\mu>0$ for every positive function $f\in C_{b}(G,\mathbb{R})$ with $f(g)\neq0$ by $G_{\mu}$. Furthermore, call a function $f:G\rightarrow\mathbb{C}$ on $G$ \emph{right-unifomly continuous} if for every $\varepsilon>0$ one can find a neighbourhood $\mathcal{U}$ of $e\in G$ with $|f(g)-f(h)|\leq\varepsilon$ for all $g,h\in G$ with $hg^{-1}\in\mathcal{U}$. The set $\text{RUCB}(G)$ of all right-uniformly continuous functions that are bounded with respect to the supremum norm becomes a commutative unital C$^{\ast}$-algebra with respect to pointwise addition, multiplication, and involution. For every $f\in\text{RUCB}(G)$ and $\mu\in\mathcal{P}_{\sigma}(G)$ the function $\Phi_{\mu}f:g\mapsto\int_{G}f(gh)d\mu(h)$ is again contained in $\text{RUCB}(G)$. The function $f$ is called \emph{$\mu$-harmonic}, if $f=\Phi_{\mu}f$. We say that the pair $(G,\mu)$ has the \emph{Liouville property} if for any $g\in G$ every $\mu$-harmonic function $f\in\text{RUCB}(G)$ satisfies $f(gh)=f(g)$ for $\mu$-almost all $h\in G$. The group $G$ is furthermore said to be \emph{Choquet--Deny} if $(G,\mu)$ satisfies the Liouville property for every $\mu\in\mathcal{P}_{\sigma}(G)$. Note that there are several different definitions of these concepts in the literature. Here we employ the ones that suit our purposes.

\begin{proposition} \label{LiouvilleFixedPoint} Let $G$ be a topological group and $\mu\in\mathcal{P}_{\sigma}(G)$ a Baire probability measure for which the pair $(G,\check{\mu})$ has the Liouville property. Let furthermore $(V,\rho)$ be a uniformly bounded, strongly operator continuous, $\mu$-integrable Banach $G$-module. Then $\mathbb{E}_{\mu}\circ\iota_{V}$ with $\mathbb{E}_{\mu}$ as in Proposition \ref{Existence} is $G_{\mu}$-invariant. \end{proposition}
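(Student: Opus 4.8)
The plan is to show that $\mathbb{E}_\mu(\widehat v)$ is fixed by $\rho_g^{\ast\ast}$ for every $v\in V$ and every $g$ in a generating set of $G_\mu$, namely every $g\in\Sigma_{\mu}(G)$; since $\mathbb{E}_\mu(\widehat v)$ lands in $(V^{\ast\ast})$ and the set of elements of $V^{\ast\ast}$ fixed by a fixed $\rho_g^{\ast\ast}$ is a closed subspace, and since the assignment $g\mapsto\rho_g^{\ast\ast}$ is multiplicative, $G_\mu$-invariance of $\mathbb{E}_\mu(\widehat v)$ will follow. To test fixedness it suffices, by the Hahn--Banach theorem, to pair against an arbitrary $\psi\in V^{\ast\ast\ast}$; but the efficient route is to pair against functionals of the form $\phi\in V^{\ast}\subseteq V^{\ast\ast\ast}$ and exploit that $\mathbb{E}_\mu$ lies in the point-weak$^{\ast}$ closure of $\mathrm{conv}\{(\rho_\mu^{\ast\ast})^i\}$, so that $(\mathbb{E}_\mu(\widehat v))(\phi)$ is a limit of convex combinations of the numbers $\phi(\rho_{\mu^i}(v))=\int_{G^i}\phi(\rho_{g_1\cdots g_i}(v))\,d\mu(g_1)\cdots d\mu(g_i)$.

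The key is to pass to the harmonic-function picture. Fix $v\in V$ and $\phi\in V^{\ast}$ with $\|\phi\|\le 1$ and consider $F(g):=\phi(\rho_g(v))$. Using strong operator continuity of $\rho$ together with the uniform bound $M$, one checks that $F\in\mathrm{RUCB}(G)$: right-uniform continuity of $g\mapsto\rho_g(v)$ in norm (a standard consequence of strong continuity and the uniform bound) transfers to $F$. Next, the Markov operator and the convolution operator $\Phi_{\check\mu}$ are intertwined: $\Phi_{\check\mu}F(g)=\int_G F(gh)\,d\check\mu(h)=\int_G\phi(\rho_g\rho_h(v))\,d\check\mu(h)=\phi\big(\rho_g(\rho_{\check\mu}(v))\big)$, i.e. $\Phi_{\check\mu}F=F_{\rho_{\check\mu}(v)}$ where $F_w(g):=\phi(\rho_g(w))$. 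Iterating, $\Phi_{\check\mu}^i F=F_{\rho_{\check\mu}^i(v)}$. Now take a net of convex combinations $T_\lambda=\sum_i c_i^{(\lambda)}(\rho_\mu^{\ast\ast})^i$ converging point-weak$^{\ast}$ to $\mathbb{E}_\mu$; applying the dual identity $(\ref{eq:Compatibility})$ and averaging, the function $H:=\lim_\lambda\sum_i c_i^{(\lambda)} F_{\rho_\mu^i(v)}$ (pointwise limit, using $|F_w(g)|\le M\|\phi\|\,\|w\|$ and $\|\rho_\mu^i(v)\|\le M\|v\|$ from Remark~\ref{BPRemarks}(ii)) satisfies $H(g)=(\rho_g^{\ast\ast}\mathbb{E}_\mu(\widehat v))(\phi)$; one must check $H\in\mathrm{RUCB}(G)$ and that $H$ is $\check\mu$-harmonic, which follows from $\rho_\mu^{\ast\ast}\mathbb{E}_\mu=\mathbb{E}_\mu$ and the intertwining $\Phi_{\check\mu}F_w=F_{\rho_{\check\mu}(w)}$ passed through the limit.

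With $H$ identified as a $\check\mu$-harmonic element of $\mathrm{RUCB}(G)$, the Liouville property of $(G,\check\mu)$ gives, for each $g\in G$, that $H(gh)=H(g)$ for $\check\mu$-almost every $h$; in particular $H$ is constant on $\check\mu$-almost every left translate, and evaluating at $g$ and at $gh$ with $h\in\Sigma_{\check\mu}(G)=\Sigma_\mu(G)$ — the point of $\Sigma_\mu$ being precisely that such $h$ are not null — yields $H(gh)=H(g)$ for all $g\in G$ and all $h\in\Sigma_\mu(G)$. Reading this back, $(\rho_{gh}^{\ast\ast}\mathbb{E}_\mu(\widehat v))(\phi)=(\rho_g^{\ast\ast}\mathbb{E}_\mu(\widehat v))(\phi)$ for all such $g,h$ and all $\phi\in V^{\ast}$; taking $g=e$ gives $\rho_h^{\ast\ast}\mathbb{E}_\mu(\widehat v)=\mathbb{E}_\mu(\widehat v)$ for every $h\in\Sigma_\mu(G)$, and multiplicativity of $g\mapsto\rho_g^{\ast\ast}$ together with closedness of the fixed space extends this to all of $G_\mu$. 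The main obstacle I expect is the careful bookkeeping that makes the pointwise limit $H$ well-defined, right-uniformly continuous, and genuinely $\check\mu$-harmonic — in particular justifying the interchange of the point-weak$^{\ast}$ limit defining $\mathbb{E}_\mu$ with the convolution $\Phi_{\check\mu}$, for which one leans on the uniform boundedness in Remark~\ref{BPRemarks}(ii) and dominated convergence — together with the translation from the "almost every $h$" statement of the Liouville property to a genuine pointwise identity on $\Sigma_\mu(G)$.
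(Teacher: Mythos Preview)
Your approach has a genuine $\mu$/$\check\mu$ mismatch that is not mere bookkeeping. The intertwining you record, $\Phi_{\check\mu}F_w=F_{\rho_{\check\mu}(w)}$, ties the convolution $\Phi_{\check\mu}$ to the operator $\rho_{\check\mu}$, whereas $\mathbb{E}_\mu$ is a point-weak$^{\ast}$ limit of convex combinations of powers of $\rho_\mu^{\ast\ast}$ --- that is, of $\rho_\mu$ on the $V$-level. Consequently your limit function $H(g)=(\rho_g^{\ast\ast}\mathbb{E}_\mu(\widehat v))(\phi)=\mathbb{E}_\mu(\widehat v)(\phi\circ\rho_g)$ satisfies $\Phi_\mu H=H$ (this is exactly $\rho_\mu^{\ast\ast}\mathbb{E}_\mu=\mathbb{E}_\mu$), but \emph{not} $\Phi_{\check\mu}H=H$ in general. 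Since the hypothesis is the Liouville property of $(G,\check\mu)$, not of $(G,\mu)$, it does not apply to $H$. (Incidentally, $\Sigma_{\check\mu}(G)=\Sigma_\mu(G)^{-1}$, not $\Sigma_\mu(G)$; only the generated closed subgroups coincide.) There is a second obstruction: your $F_w(g)=\phi(\rho_g w)$ is \emph{left}-uniformly continuous via strong continuity of $\rho$, not right-uniformly continuous, so membership of $H$ in $\mathrm{RUCB}(G)$ does not follow from your setup. Finally, even granting everything, your conclusion would be $\rho_h^{\ast\ast}\mathbb{E}_\mu(\widehat v)=\mathbb{E}_\mu(\widehat v)$, i.e.\ $\mathrm{im}(\mathbb{E}_\mu\circ\iota_V)\subseteq(V^{\ast\ast})^{G_\mu}$; this is not what the proposition asserts. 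Here ``$\mathbb{E}_\mu\circ\iota_V$ is $G_\mu$-invariant'' means $\mathbb{E}_\mu\circ\iota_V\circ\rho_h=\mathbb{E}_\mu\circ\iota_V$ for $h\in G_\mu$ (compare Corollary~\ref{CenterFixedPoint}, where the two properties are stated separately).

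The paper avoids all of this by first packaging $\mathbb{E}_\mu$ into a bounded linear functional $\psi_\phi\in V^{\ast}$ via $\psi_\phi(v):=\mathbb{E}_\mu(\widehat v)(\phi)$, and then setting $f_{v,\phi}(g):=\rho_g^{\ast}(\psi_\phi)(v)=\psi_\phi(\rho_{g^{-1}}(v))$. The inversion $g\mapsto g^{-1}$ is exactly what makes $f_{v,\phi}$ right-uniformly continuous (strong continuity of $\rho$ acts on the argument) and $\check\mu$-harmonic: one computes $\Phi_{\check\mu}f_{v,\phi}(g)=\psi_\phi(\rho_\mu\rho_{g^{-1}}(v))=\psi_\phi(\rho_{g^{-1}}(v))$, the second equality being $\mathbb{E}_\mu\rho_\mu^{\ast\ast}=\mathbb{E}_\mu$ read on $\iota_V(V)$. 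No limits or interchange arguments are needed, and the Liouville conclusion $f_{v,\phi}(h)=f_{v,\phi}(e)$ for $h\in\Sigma_\mu(G)$ unwinds directly to $\mathbb{E}_\mu\circ\iota_V\circ\rho_{h}=\mathbb{E}_\mu\circ\iota_V$.
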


\begin{proof} For fixed $\phi\in V^{\ast}$ we can define a bounded linear functional $\psi_{\phi}\in V^{\ast}$ via $\psi_{\phi}(v):=\mathbb{E}_{\mu}(\widehat{v})(\phi)$. For every $v\in V$ the map $f_{v,\phi}:G\rightarrow\mathbb{C}$, $f_{v,\phi}(g):=\rho_{g}^{\ast}(\psi_{\phi})(v)$ is contained in $\text{RUCB}(G)$ with 
\[
\int_{G}f_{v,\phi}(gh)d\breve{\mu}(h)=\int_{G}\rho_{gh}^{\ast}(\psi_{\phi})(v)d\breve{\mu}(h)=(\psi_{\phi}\circ\rho_{\mu})\left(\rho_{g^{-1}}(v)\right)=\rho_{g}^{\ast}(\psi_{\phi})(v)=f_{v,\phi}(g).
\]
It follows that $f_{v,\phi}$ is $\breve{\mu}$-harmonic and hence for all $g\in G$, $f_{v,\phi}(gh)=f_{v,\phi}(g)$ for $\mu$-almost every $h\in G$. We in particular obtain that the function $h\mapsto|f_{v,\phi}(h)-f_{v,\phi}(e)|$ is positive, bounded and continuous with $\int_{G}|f_{v,\phi}(h)-f_{v,\phi}(e)|d\mu(h)=0$. From the definition of $\Sigma_{\mu}(G)$ it follows that $f_{v,\phi}(h)=f_{v,\phi}(e)$ for all $h\in\Sigma_{\mu}(G)$ and hence, since $\phi\in V^{\ast}$, $v\in V$ where chosen arbitrarily, $\mathbb{E}_{\mu}\circ\iota_{V}\circ\rho_{h}=\mathbb{E}_{\mu}\circ\iota_{V}$. We finally deduce that $\mathbb{E}_{\mu}\circ\iota_{V}$ is $G_{\mu}$-invariant, as claimed. \end{proof}

\begin{remark} \label{LiouvilleRemark} The characterization in \cite[Corollary 4.8]{SchneiderThom} states that a Hausdorff second-countable topological group $G$ is \emph{amenable} (i.e. the canonical action of $G$ on the state space of $\text{RUCB}(G)$ has a fixed point) if and only if it admits a fully supported, regular Borel probability measure $\mu$ for which every $\mu$-harmonic function is constant. In combination with Proposition \ref{LiouvilleFixedPoint} this implies that every amenable topological group $G$ admits a Baire probability measure $\mu\in\mathcal{P}_{\sigma}(G)$ such that for every uniformly bounded, strongly operator continuous, BP-integrable Banach $G$-module $(V,\rho)$ the map $\mathbb{E}_{\mu}\circ\iota_{V}$ is $G$-invariant. \end{remark}

\begin{corollary} \label{CenterFixedPoint} Let $G$ be a topological group, $\mu\in\mathcal{P}_{\sigma}(\mathcal{Z}(G))$ a Baire probability measure, $(V,\rho)$ a uniformly bounded, strongly operator continuous, $\mu$-integrable Banach $G$-module, and set $Z:=Z(G)_{\mu}$. Assume that the product map $Z\times Z\rightarrow Z$ is measurable with respect to the product $\sigma$-algebra $\mathcal{B}a(Z)\otimes\mathcal{B}a(Z)$ and $\mathcal{B}a(Z)$. Then $\mathbb{E}_{\mu}\circ\iota_{V}$ with $\mathbb{E}_{\mu}$ as in Proposition \ref{Existence} is $Z$-invariant with \emph{$\text{im}(\mathbb{E}_{\mu}\circ\iota_{V})\subseteq(V^{\ast\ast})^{Z}$}. \end{corollary}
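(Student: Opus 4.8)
The plan is to deduce the corollary from Proposition \ref{LiouvilleFixedPoint} applied to the abelian topological group $\mathcal{Z}(G)$ in place of $G$. Restricting $(V,\rho)$ to $\mathcal{Z}(G)$ keeps it uniformly bounded, strongly operator continuous and $\mu$-integrable, and since $\mu$ lives on $\mathcal{Z}(G)$ the Markov operator $\rho_{\mu}$ and the projection $\mathbb{E}_{\mu}$ of Proposition \ref{Existence} are the same whether computed for the $G$-module or for the $\mathcal{Z}(G)$-module. Hence, once we know that the pair $(\mathcal{Z}(G),\check{\mu})$ has the Liouville property, Proposition \ref{LiouvilleFixedPoint} gives that $\mathbb{E}_{\mu}\circ\iota_{V}$ is $\mathcal{Z}(G)_{\mu}=Z$-invariant, i.e. $\mathbb{E}_{\mu}\circ\iota_{V}\circ\rho_{h}=\mathbb{E}_{\mu}\circ\iota_{V}$ for all $h\in Z$, which is the first assertion. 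So the two remaining tasks are: (a) verify the Liouville property of $(\mathcal{Z}(G),\check{\mu})$; (b) upgrade $Z$-invariance of $\mathbb{E}_{\mu}\circ\iota_{V}$ to $\text{im}(\mathbb{E}_{\mu}\circ\iota_{V})\subseteq(V^{\ast\ast})^{Z}$.

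For (a) I would run the Krein--Milman proof of the Choquet--Deny theorem, adapted to Baire measures. Let $f\in\text{RUCB}(\mathcal{Z}(G))$ be $\check{\mu}$-harmonic. Its orbit $\{R_{s}f\mid s\in\mathcal{Z}(G)\}$ of right translates is uniformly bounded and uniformly (right-uniformly) equicontinuous with the same modulus as $f$; moreover, since $\mathcal{Z}(G)$ is abelian, every right translation $R_{s}$ commutes with the averaging operator $\Phi_{\check{\mu}}$ and hence preserves $\check{\mu}$-harmonicity. Consequently the closed convex hull $\mathcal{C}$ of the orbit in the topology of pointwise convergence is a compact convex set consisting of $\check{\mu}$-harmonic functions in $\text{RUCB}(\mathcal{Z}(G))$, on which each $R_{s}$ acts as an affine homeomorphism. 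By Krein--Milman, $\mathcal{C}$ has extreme points; for such a point $e$, the harmonicity identity $e=\int_{\mathcal{Z}(G)}R_{h}e\,d\check{\mu}(h)$ exhibits $e$ as the barycenter of the push-forward of $\check{\mu}$ along the Baire-measurable map $h\mapsto R_{h}e$, and (after extending this push-forward to a Radon measure on the compact Hausdorff set $\mathcal{C}$) the fact that an extreme point is the barycenter of no probability measure other than its own Dirac measure forces $e(xh)=e(x)$ for $\check{\mu}$-almost every $h$, for every $x$; since the exceptional set is open and $\check{\mu}$-null, this yields $R_{h}e=e$ for every $h\in\Sigma_{\check{\mu}}(\mathcal{Z}(G))$. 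As $R_{h}-\text{id}$ is affine, pointwise continuous, and vanishes on the set of extreme points of $\mathcal{C}$, it vanishes on all of $\mathcal{C}$, so every $\check{\mu}$-harmonic $f\in\text{RUCB}(\mathcal{Z}(G))$ is right-invariant under $\Sigma_{\check{\mu}}(\mathcal{Z}(G))$, hence under the generated closed subgroup $Z$; this is the form of the Liouville property that enters Proposition \ref{LiouvilleFixedPoint}. The hypothesis that the multiplication $Z\times Z\to Z$ is $\mathcal{B}a(Z)\otimes\mathcal{B}a(Z)$--$\mathcal{B}a(Z)$-measurable is what legitimizes the convolution and Fubini steps underlying these harmonicity identities in this non-metrizable, non-locally-compact setting.

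For (b): each $h\in Z\subseteq\mathcal{Z}(G)$ satisfies $\rho_{h}\rho_{g}=\rho_{g}\rho_{h}$ for all $g\in G$, so by Remark \ref{BPRemarks}(ii) (together with a short computation with functionals) $\rho_{h}$ commutes with $\rho_{\mu}$, hence $\rho_{h}^{\ast\ast}$ commutes with $\rho_{\mu}^{\ast\ast}$, and therefore with $\mathbb{E}_{\mu}$: the set $\{T\in\mathcal{B}(V^{\ast\ast})\mid\rho_{h}^{\ast\ast}T=T\rho_{h}^{\ast\ast}\}$ is point-weak$^{\ast}$ closed (both left and right multiplication by $\rho_{h}^{\ast\ast}$ are point-weak$^{\ast}$ continuous) and contains $\text{conv}\{(\rho_{\mu}^{\ast\ast})^{i}\mid i\in\mathbb{N}\}$, hence contains $\mathbb{E}_{\mu}$ by Proposition \ref{Existence}. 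Combining this with the identity $\rho_{h}^{\ast\ast}\circ\iota_{V}=\iota_{V}\circ\rho_{h}$ and the $Z$-invariance from (a), for every $v\in V$ and $h\in Z$ we obtain $\rho_{h}^{\ast\ast}(\mathbb{E}_{\mu}(\iota_{V}(v)))=\mathbb{E}_{\mu}(\rho_{h}^{\ast\ast}(\iota_{V}(v)))=\mathbb{E}_{\mu}(\iota_{V}(\rho_{h}(v)))=\mathbb{E}_{\mu}(\iota_{V}(v))$, i.e. $\mathbb{E}_{\mu}(\iota_{V}(v))\in(V^{\ast\ast})^{Z}$, as required.

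The step I expect to be the main obstacle is (a): carrying the classical Krein--Milman/extreme-point argument for Choquet--Deny over to a general abelian topological group carrying only a Baire probability measure. One must check that pointwise limits of harmonic functions remain in $\text{RUCB}$ (handled via the equicontinuity of the orbit), that the barycenter calculus and the characterization of extreme points apply to the push-forward Baire/Radon measure on $\mathcal{C}$, and that the convolution and Fubini manipulations in the harmonicity identities make sense --- which is precisely the role of the measurability hypothesis on the product map of $Z$. The passage from $\Sigma_{\check{\mu}}$-invariance to $Z$-invariance, and the identification of $\mathbb{E}_{\mu}$ computed over $G$ with $\mathbb{E}_{\mu}$ computed over $\mathcal{Z}(G)$, are comparatively routine.
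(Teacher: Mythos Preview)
Your overall architecture matches the paper: reduce to Proposition~\ref{LiouvilleFixedPoint} by showing that $(\mathcal{Z}(G),\check{\mu})$ has the Liouville property, and then observe that part (b) follows from commutation of $\rho_{h}^{\ast\ast}$ with $\mathbb{E}_{\mu}$. Part (b) is correct as written and in fact makes explicit a step the paper leaves to the reader.

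Where you diverge from the paper is in the proof of the Liouville property itself. The paper does not use Krein--Milman; it runs Prunaru's elementary $L^{2}$ argument: for a $\mu$-harmonic $f\in\text{RUCB}(\mathcal{Z}(G))$ one sets $h(z):=\int|f(z)-f(cz)|^{2}\,d\mu(c)$, observes the identity $h=\Phi_{\mu}(|f|^{2})-|f|^{2}$, and uses Cauchy--Schwarz together with Fubini to get $\Phi_{\mu}(h)\geq h$; a telescoping sum then forces $h\equiv 0$. This is short, avoids any compactness or extreme-point machinery, and shows exactly where the measurability hypothesis on the product map enters: it is what makes the integrand $(c,c')\mapsto|f(c'z)-f(c'cz)|^{2}$ measurable for the product $\sigma$-algebra, so that Fubini applies.

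Your Krein--Milman route, by contrast, has a genuine gap you have not flagged. You need the pointwise-closed convex hull $\mathcal{C}$ of the orbit to consist of $\check{\mu}$-harmonic functions, so that an extreme point $e$ satisfies $e=\int R_{h}e\,d\check{\mu}(h)$. But harmonicity is not obviously preserved under pointwise \emph{net} limits: passing $\lim_{\lambda}\int g_{\lambda}(xh)\,d\check{\mu}(h)=\int g(xh)\,d\check{\mu}(h)$ is a net version of dominated convergence, which generally fails. In the classical locally compact setting one sidesteps this via Arzel\`a--Ascoli (compact-open topology, sequential limits), but that is unavailable here. Your invocation of the measurability hypothesis as ``legitimizing convolution and Fubini steps'' is also misplaced for this approach: no Fubini appears in your argument, so as sketched you neither use the hypothesis nor close the harmonicity gap. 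The paper's Prunaru argument avoids this issue entirely and uses the hypothesis precisely where it is needed.
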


\begin{proof} By Proposition \ref{LiouvilleFixedPoint} it suffices to show that the center of $G$ is Choquet--Deny. Following the proof in \cite{Prunaru00}, consider for $\mu\in\mathcal{P}_{\sigma}(\mathcal{Z}(G))$ the bounded linear operator $\Phi_{\mu}$ on $\text{RUCB}(\mathcal{Z}(G))$ defined above and define for fixed $f\in\text{RUCB}(\mathcal{Z}(G))$ with $f=\Phi_{\mu}f$ a function $h\in\text{RUCB}(\mathcal{Z}(G))$ via $h(z):=\int_{\mathcal{Z}(G)}|f(z)-f(cz)|^{2}d\mu(c)$. In combination with Fubini's theorem and the Cauchy-Schwarz inequality we obtain that $\Phi_{\mu}(h)(z)\geq h(z)$ for all $z\in\mathcal{Z}(G)$. By making use of the identity $h=\Phi_{\mu}(|f|^{2})-|f|^{2}$, we then have 
\[
h\leq\frac{1}{N}\sum_{i=0}^{N}\left(\Phi_{\mu}^{i+1}(|f|^{2})-\Phi_{\mu}^{i}(|f|^{2})\right)=\frac{1}{N}\left(\Phi_{\mu}^{N+1}(|f|^{2})-1\right)\rightarrow0,
\]
so that $h$ vanishes. It follows that for every $z\in\mathcal{Z}(G)$ the equality $f(z)=f(cz)$ holds $\mu$-almost everywhere. This finishes the proof. \end{proof}

\vspace{3mm}



\section{The projection $\mathbb{E}_{\mu}$\label{sec:The-projection}}

Theorem \ref{ErgodicTheorem} indicates that it is desirable to know more about the range of the map $\mathbb{E}_{\mu}$ in Proposition \ref{Existence}. Studying it will be the aim of this section.

\vspace{3mm}


\subsection{Weakly uniquely stationary representations}

For $\mu\in\mathcal{P}_{\sigma}(G)$ and a uniformly bounded $\mu$-integrable linear representation $\rho:G\rightarrow\text{GL}(V)$ of a topological group $G$ on a Banach space $V$ we write 
\[
V^{\mu}:=\{v\in V\mid\rho_{\mu}(v)=v\}\subseteq V
\]
for the subspace of all \emph{$\rho_{\mu}$-invariant elements} in $V$. Similarly, $(V^{\ast})^{\mu}\subseteq V^{\ast}$ denotes the space of all \emph{$\rho_{\mu}^{\ast}$-invariant functionals} on $V$ and $(V^{\ast\ast})^{\mu}$ is the space of all \emph{$\rho_{\mu}^{\ast\ast}$-invariant elements} in $V^{\ast\ast}$. The equality (\ref{eq:Compatibility}) immediately implies that $\iota_{V}(V^{\mu})\subseteq(V^{\ast\ast})^{\mu}$, whereas Proposition \ref{Existence} in particular gives that $\text{im}(\mathbb{E}_{\mu})=(V^{\ast\ast})^{\mu}$.

\begin{definition} \label{WeaklyUniquelyStationary} Let $G$ be a topological group and $\mu\in\mathcal{P}_{\sigma}(G)$ a Baire probability measure. We call a uniformly bounded linear representation $\rho:G\rightarrow\text{GL}(V)$ of $G$ on a Banach space $V$ \emph{weakly uniquely $\mu$-stationary} if it is $\mu$-integrable and if every non-trivial functional $\phi\in(V^{\ast})^{\breve{\mu}}$ admits an element $v\in V^{\mu}$ with $\phi(v)\neq0$. In this case we simply say that the Banach $G$-module $(V,\rho)$ is \emph{weakly uniquely $\mu$-stationary}. \end{definition}

The first three statements of the following proposition are essentially contained in \cite{Kakutani38}, \cite{Yosida38}, \cite{Sine70} (see also \cite[Chapter 2.1]{Krengel85}). We include a proof for the convenience of the reader.

\begin{proposition} \label{EquivalentCharacterizations} Let $G$ be a topological group. For a Baire probability measure $\mu\in\mathcal{P}_{\sigma}(G)$ and a uniformly bounded $\mu$-integrable Banach $G$-module $(V,\rho)$ the following statements are equivalent: 
\begin{enumerate}
\item $\rho$ is weakly uniquely $\mu$-stationary; 
\item $V$ admits a decomposition into norm closed $\rho_{\mu}$-invariant linear subspaces \emph{$V=\overline{\text{im}(\Delta_{\mu})}\oplus V^{\mu}$}; 
\item The sequence $\left(\frac{1}{n}\sum_{i=0}^{n-1}(\rho_{\mu}^{\ast\ast})^{i}(\widehat{v})\right)_{n\in\mathbb{N}}\subseteq V^{\ast\ast}$ converges for every $v\in V$ in norm; 
\item \emph{$\mathbb{E}_{\mu}(\widehat{v})\in\text{im}(\iota_{V})$} for every $v\in V$, where $\mathbb{E}_{\mu}$ is an operator as in Proposition \ref{Existence}. 
\end{enumerate}
\end{proposition}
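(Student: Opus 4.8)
The plan is to prove the chain of implications $(1)\Rightarrow(2)\Rightarrow(3)\Rightarrow(4)\Rightarrow(1)$, leaning heavily on the machinery already established in Section \ref{StationaryElements}. The conceptual heart is the relationship between $\ker(\mathbb{E}_\mu)$, $\overline{\mathrm{im}(\Delta_\mu^{\ast\ast})}$, and the annihilator duality between $\rho_\mu$-invariant vectors in $V$ and $\rho_\mu^{\ast}$-invariant functionals in $V^{\ast}$, so the first thing I would record is the elementary fact that $\bigl(\overline{\mathrm{im}(\Delta_\mu)}\bigr)^{\perp} = (V^{\ast})^{\breve\mu}$: a functional $\phi$ kills $\mathrm{im}(\Delta_\mu) = \mathrm{im}(\mathrm{id}_V - \rho_\mu)$ iff $\phi = \phi\circ\rho_\mu = \rho_\mu^{\ast}(\phi)$ (using $\rho_\mu^{\ast} = (\cdot)\circ\rho_{\breve\mu}$ and the definition of $\breve\mu$; note one must be slightly careful about whether it is $\mu$ or $\breve\mu$ that appears, which is why condition (1) is phrased with $(V^{\ast})^{\breve\mu}$).

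For $(1)\Rightarrow(2)$: the subspaces $\overline{\mathrm{im}(\Delta_\mu)}$ and $V^\mu$ are both norm-closed and $\rho_\mu$-invariant (the latter clearly, the former because $\Delta_\mu$ commutes with $\rho_\mu$). They intersect trivially: if $v\in V^\mu\cap\overline{\mathrm{im}(\Delta_\mu)}$ and $v\neq 0$, pick $\phi\in V^{\ast}$ with $\phi(v)\neq 0$; then weak unique stationarity cannot directly be applied to $\phi$, so instead I would argue that $\widehat v = \iota_V(v) \in (V^{\ast\ast})^\mu = \mathrm{im}(\mathbb{E}_\mu)$ while also $\widehat v \in \overline{\mathrm{im}(\Delta_\mu^{\ast\ast})}\subseteq\ker(\mathbb{E}_\mu)$ by Lemma \ref{Inclusion}, forcing $\widehat v = \mathbb{E}_\mu(\widehat v) = 0$. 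For the sum being everything, I would take $v\in V$, consider $\widehat v - \mathbb{E}_\mu(\widehat v)$, show it lies in $\ker(\mathbb{E}_\mu)$, and — this is the place where weak unique stationarity is genuinely used — show $\mathbb{E}_\mu(\widehat v)\in\mathrm{im}(\iota_V)$, say $\mathbb{E}_\mu(\widehat v) = \widehat w$ with $w\in V^\mu$ (from $\rho_\mu^{\ast\ast}\mathbb{E}_\mu = \mathbb{E}_\mu$ and equation (\ref{eq:Compatibility})), whence $\widehat{v-w}\in\ker(\mathbb{E}_\mu)\cap\mathrm{im}(\iota_V) = \overline{\mathrm{im}(\Delta_\mu^{\ast\ast})}\cap\mathrm{im}(\iota_V)$ by Lemma \ref{Inclusion}; then I must descend this to say $v-w\in\overline{\mathrm{im}(\Delta_\mu)}$ in $V$, which again uses the annihilator identity above together with Hahn--Banach (a functional $\phi\in V^{\ast}$ killing $\mathrm{im}(\Delta_\mu)$ is $\rho_\mu^{\ast}$-invariant, and by weak unique stationarity, if $\phi\neq 0$ it does not vanish on $V^\mu$ — but I need $\phi(v-w) = 0$, so I should instead feed $\phi$ through to $\widehat{v-w}\in\overline{\mathrm{im}(\Delta_\mu^{\ast\ast})}$, noting $\widehat{v-w}(\phi) = \phi(v-w)$ and that $\phi\in(V^{\ast})^{\breve\mu}$ annihilates $\mathrm{im}(\Delta_\mu^{\ast\ast})$). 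The claim that $\mathbb{E}_\mu(\widehat v)\in\mathrm{im}(\iota_V)$ from (1) is really the crux: I expect to prove it by showing that the functional $\phi\mapsto \mathbb{E}_\mu(\widehat v)(\phi)$ on $V^{\ast}$, restricted appropriately, is weak$^{\ast}$-continuous, or more concretely by using that $\mathbb{E}_\mu$ lies in the point-weak$^{\ast}$ closure of $\mathrm{conv}\{(\rho_\mu^{\ast\ast})^i\}$ together with the decomposition one is trying to build — there is a mild circularity to dissolve here, and this is where I'd be most careful.

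For $(2)\Rightarrow(3)$: given the decomposition $V = \overline{\mathrm{im}(\Delta_\mu)}\oplus V^\mu$, write $v = v_0 + v_1$ accordingly. On $V^\mu$ the averages are constant, equal to $\widehat{v_1}$. On $\overline{\mathrm{im}(\Delta_\mu)}$, for $v_0 = \Delta_\mu(u)$ the telescoping identity $\frac1n\sum_{i=0}^{n-1}\rho_\mu^i\Delta_\mu(u) = \frac1n(u - \rho_\mu^n u)$ shows the averages go to $0$ in $V$-norm with bound $\tfrac{2M}{n}\|u\|$; by the uniform bound $\|\rho_\mu^i\|\le M$ (Remark \ref{BPRemarks}(ii)) and an $\varepsilon/3$-approximation this extends to all of $\overline{\mathrm{im}(\Delta_\mu)}$, so the Cesàro averages in $V$ converge, hence so do their images $\iota_V$-images in $V^{\ast\ast}$ — and since $\rho_\mu^{\ast\ast}\iota_V = \iota_V\rho_\mu$ these are exactly $\frac1n\sum(\rho_\mu^{\ast\ast})^i(\widehat v)$. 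For $(3)\Rightarrow(4)$: the norm limit $L$ of $\frac1n\sum(\rho_\mu^{\ast\ast})^i(\widehat v)$ satisfies $\rho_\mu^{\ast\ast}(L) = L$ (standard telescoping, as in the proof of Theorem \ref{ErgodicTheorem}), so $L\in(V^{\ast\ast})^\mu = \mathrm{im}(\mathbb{E}_\mu)$; moreover $\widehat v - L\in\overline{\mathrm{im}(\Delta_\mu^{\ast\ast})}\subseteq\ker(\mathbb{E}_\mu)$, hence $\mathbb{E}_\mu(\widehat v) = \mathbb{E}_\mu(L) = L\in\mathrm{im}(\iota_V)$. Finally $(4)\Rightarrow(1)$: suppose $\phi\in(V^{\ast})^{\breve\mu}$ is non-trivial; pick $v\in V$ with $\phi(v)\neq 0$, and by (4) write $\mathbb{E}_\mu(\widehat v) = \widehat w$ with $w\in V^\mu$. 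Using $\rho_\mu^{\ast}$-invariance of $\phi$ one checks $\phi(w) = \widehat w(\phi) = \mathbb{E}_\mu(\widehat v)(\phi) = \widehat v(\phi) = \phi(v)\neq 0$ — here the middle equality $\mathbb{E}_\mu(\widehat v)(\phi) = \widehat v(\phi)$ follows because $\phi\in(V^{\ast})^{\breve\mu}$ means $\widehat v(\phi) = \widehat v((\rho_\mu^{\ast})^i(\phi)) = ((\rho_\mu^{\ast\ast})^i\widehat v)(\phi)$ for all $i$, so $\phi$ is unchanged by every element of $\mathrm{conv}\{(\rho_\mu^{\ast\ast})^i\}$ and, by continuity, by $\mathbb{E}_\mu$ — exactly the argument used in Lemma \ref{Inclusion}. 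Thus $w\in V^\mu$ with $\phi(w)\neq 0$, giving (1).
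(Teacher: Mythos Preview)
Your chain $(2)\Rightarrow(3)\Rightarrow(4)\Rightarrow(1)$ is correct and essentially matches the paper's argument, with only cosmetic differences (e.g.\ in $(3)\Rightarrow(4)$ you show $\widehat v - L\in\overline{\mathrm{im}(\Delta_\mu^{\ast\ast})}\subseteq\ker(\mathbb{E}_\mu)$, while the paper argues $\mathbb{E}_\mu(\widehat v)=\mathbb{E}_\mu(L)=L$ directly from $\mathbb{E}_\mu\rho_\mu^{\ast\ast}=\mathbb{E}_\mu$ and continuity; both work).

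The genuine gap is in $(1)\Rightarrow(2)$. Your plan for ``the sum is everything'' requires showing $\mathbb{E}_\mu(\widehat v)\in\mathrm{im}(\iota_V)$ directly from (1), which is precisely statement (4). You flag this yourself (``there is a mild circularity to dissolve here''), but you never dissolve it: the sketch ``showing that the functional $\phi\mapsto\mathbb{E}_\mu(\widehat v)(\phi)$ \ldots\ is weak$^{\ast}$-continuous'' is not an argument, and I do not see a way to extract (4) from (1) without passing through (2) and (3). The paper avoids this entirely with a one-line Hahn--Banach argument that uses exactly the annihilator identity you yourself recorded at the outset: if $\phi\in V^{\ast}$ vanishes on $\overline{\mathrm{im}(\Delta_\mu)}\oplus V^\mu$, then from $\phi|_{\overline{\mathrm{im}(\Delta_\mu)}}=0$ you get $\phi\in(V^{\ast})^{\breve\mu}$, and since additionally $\phi|_{V^\mu}=0$, weak unique stationarity forces $\phi=0$. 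Hahn--Banach then gives $\overline{\mathrm{im}(\Delta_\mu)}\oplus V^\mu=V$. So you had the right tool in hand but reached for the heavier $\mathbb{E}_\mu$ machinery instead of applying the definition of (1) directly. (Your argument for trivial intersection via Lemma~\ref{Inclusion} is fine, though the paper does this too with Ces\`aro averages: on $\overline{\mathrm{im}(\Delta_\mu)}$ they tend to $0$, on $V^\mu$ they are constant.)
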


\begin{proof} ``$\text{(1)}\Rightarrow\text{(2)}$'' Assume that the representation $\rho$ is weakly uniquely $\mu$-stationary. Both $\overline{\text{im}(\Delta_{\mu})}\subseteq V$ and $V^{\mu}\subseteq V$ are obviously norm closed. As in the proof of Theorem \ref{ErgodicTheorem} it can be deduced that $\frac{1}{n}\sum_{i=0}^{n-1}(\rho_{\mu}^{\ast\ast})^{i}(\widehat{v})\rightarrow0$ in norm for every $v\in\overline{\text{im}(\Delta_{\mu})}$, whereas $\frac{1}{n}\sum_{i=0}^{n-1}(\rho_{\mu}^{\ast\ast})^{i}(\widehat{v})=\widehat{v}$ for all $n\in\mathbb{N}$, $v\in V^{\mu}$. Hence, $\overline{\text{im}(\Delta_{\mu})}\cap V^{\mu}=\{0\}$. Define $V_{0}:=\overline{\text{im}(\Delta_{\mu})}\oplus V^{\mu}\subseteq V$ and let $\phi\in V^{\ast}$ be a functional on $V$ that vanishes on $V_{0}$. One in particular has that $\phi\circ\rho_{\mu}(v)=\phi(v)$ for all $v\in V$, i.e. $\phi\in(V^{\ast})^{\breve{\mu}}$. But $\rho$ is weakly uniquely $\mu$-stationary, so by construction, the functional $\phi$ must be trivial. The Hahn-Banach theorem finally implies that $V_{0}=V$.

``$\text{(2)}\Rightarrow\text{(3)}$'' Assume that $V=\overline{\text{im}(\Delta_{\mu})}\oplus V^{\mu}$ and let $v=v_{1}+v_{2}$ where $v_{1}\in\overline{\text{im}(\Delta_{\mu})}$ and $v_{2}\in V^{\mu}$. As we have seen before, $\frac{1}{n}\sum_{i=0}^{n-1}(\rho_{\mu}^{\ast\ast})^{i}(\widehat{v_{1}})\rightarrow0$ and $\frac{1}{n}\sum_{i=0}^{n-1}(\rho_{\mu}^{\ast\ast})^{i}(\widehat{v_{2}})\rightarrow\widehat{v_{2}}$ in norm. It follows that the sequence $\left(\frac{1}{n}\sum_{i=0}^{n-1}(\rho_{\mu}^{\ast\ast})^{i}(\widehat{v})\right)_{n\in\mathbb{N}}\subseteq V^{\ast\ast}$ norm-converges for every $v\in V$.

``$\text{(3)}\Rightarrow\text{(4)}$'' Assume that the sequence $\left(\frac{1}{n}\sum_{i=0}^{n-1}(\rho_{\mu}^{\ast\ast})^{i}(\widehat{v})\right)_{n\in\mathbb{N}}\subseteq V^{\ast\ast}$ converges for every $v\in V$ in norm. The limit of this sequence is $\rho_{\mu}^{\ast\ast}$-invariant so that 
\begin{eqnarray*}
\mathbb{E}_{\mu}(\widehat{v}) & = & \lim_{n\rightarrow\infty}\frac{1}{n}\sum_{i=0}^{n-1}\mathbb{E}_{\mu}\left((\rho_{\mu}^{\ast\ast})^{i}(\widehat{v})\right)\\
 & = & \mathbb{E}_{\mu}\left(\lim_{n\rightarrow\infty}\frac{1}{n}\sum_{i=0}^{n-1}(\rho_{\mu}^{\ast\ast})^{i}(\widehat{v})\right)\\
 & = & \lim_{n\rightarrow\infty}\frac{1}{n}\sum_{i=0}^{n-1}(\rho_{\mu}^{\ast\ast})^{i}(\widehat{v})\,
\end{eqnarray*}
is contained in $\text{im}(\iota_{V})$ as claimed.

``$\text{(4)}\Rightarrow\text{(1)}$'' Assume that $\mathbb{E}_{\mu}(\widehat{v})\in\text{im}(\iota_{V})$ for every $v\in V$. By Theorem \ref{ErgodicTheorem} we then have that for $v\in V$ and $\phi\in(V^{\ast})^{\breve{\mu}}$ 
\begin{eqnarray*}
\phi(v) & = & \lim_{n\rightarrow\infty}\frac{1}{n}\sum_{i=0}^{n-1}\left(\phi\circ\rho_{\mu}^{i}\right)(v)\\
 & = & \lim_{n\rightarrow\infty}\left(\phi\circ\iota_{V}^{-1}\right)\left(\frac{1}{n}\sum_{i=0}^{n-1}(\rho_{\mu}^{\ast\ast})^{i}(\widehat{v})\right)\\
 & = & \left(\phi\circ\iota_{V}^{-1}\right)\left(\mathbb{E}_{\mu}(\widehat{v})\right).
\end{eqnarray*}
But $(\iota_{V}^{-1}\circ\mathbb{E}_{\mu})(\widehat{v})\in V^{\mu}$. Hence $\phi$ must be trivial if $\phi(v)=0$ for all $v\in V^{\mu}$. It follows that $\rho$ is weakly uniquely $\mu$-stationary. \end{proof}

Justified by Proposition \ref{EquivalentCharacterizations}, for a Baire probability measure $\mu\in\mathcal{P}_{\sigma}(G)$ and a uniformly bounded weakly uniquely $\mu$-stationary Banach $G$-module $(V,\rho)$, we will from now on denote $\widetilde{\mathbb{E}}_{\mu}:=\iota_{V}^{-1}\circ\mathbb{E}_{\mu}\circ\iota_{V}$.

The following class of examples occurs in \cite[Proposition 2.2]{JaworskiNeufang}. Representations like this have been studied in \cite{CTV08}. More examples can be found in Lemma \ref{Connection}, in Subsection \ref{subsec:Weakly-almost-periodic}, and in \cite[Section 2]{JaworskiNeufang}.

\begin{example} Let $G$ be a countable discrete group equipped with a probability measure $\mu$ satisfying $\#G_{\mu}=\infty$, and let $(V,\rho)$ be a uniformly bounded Banach $G$-module. Suppose that for all $v\in V$, $\phi\in V^{\ast}$ the function $f_{\phi,v}:g\mapsto\rho_{g}^{\ast}(\phi)(v)$ is contained in $C_{0}(G)$. Then $(V,\rho)$ is weakly uniquely $\mu$-stationary. Indeed, by a similar argument as in Proposition \ref{LiouvilleFixedPoint} one deduces that for $\phi\in(V^{\ast})^{\breve{\mu}}$ and $v\in V$ the function $f_{\phi,v}\in C_{0}(G)$ is $\breve{\mu}$-harmonic. The maximum principle (see e.g. \cite[Exercise 2.80]{Yadin24}) then implies that $\phi\equiv0$ and hence $(V^{\ast})^{\breve{\mu}}=0$. \end{example}

\begin{proposition} \label{ConvexHull} Let $G$ be a topological group, $\mu\in\mathcal{P}_{\sigma}(G)$ a Baire probability measure and $(V,\rho)$ a uniformly bounded, weakly uniquely $\mu$-stationary Banach $G$-module. Then all finite subsets $T\subseteq V$ and $\varepsilon>0$ admit an element \emph{$\delta\in\text{conv}\{\rho_{g}\mid g\in G\}\subseteq\mathcal{B}(V)$} with $\sup_{v\in T}\Vert\widetilde{\mathbb{E}}_{\mu}(v)-\delta(v)\Vert<\varepsilon$. \end{proposition}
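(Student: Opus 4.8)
The plan is to reduce the statement, via an ergodic‑averaging argument, to a \emph{simultaneous} approximation problem on the finite set $T$, and then to solve the latter by a Hahn--Banach separation argument carried out in the finite product of copies of $V$ indexed by $T$, which I will denote $V^{T}$ (a Banach space with, say, the maximum norm, whose dual is $(V^{\ast})^{T}$).

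First I would put $S_{n}:=\frac{1}{n}\sum_{i=0}^{n-1}\rho_{\mu}^{i}\in\mathcal{B}(V)$. Iterating (\ref{eq:Compatibility}) gives $\iota_{V}\circ S_{n}=(\frac{1}{n}\sum_{i=0}^{n-1}(\rho_{\mu}^{\ast\ast})^{i})\circ\iota_{V}$. Since $(V,\rho)$ is weakly uniquely $\mu$-stationary, condition (4) of Proposition \ref{EquivalentCharacterizations} gives $\mathbb{E}_{\mu}(\widehat{v})\in\text{im}(\iota_{V})$ for every $v$, so Theorem \ref{ErgodicTheorem} applies and yields $\iota_{V}(S_{n}(v))\to\mathbb{E}_{\mu}(\widehat{v})=\iota_{V}(\widetilde{\mathbb{E}}_{\mu}(v))$ in norm, i.e.\ $S_{n}(v)\to\widetilde{\mathbb{E}}_{\mu}(v)$ in $V$ for every $v\in V$. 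As $T$ is finite I can therefore choose $n$ with $\sup_{v\in T}\Vert\widetilde{\mathbb{E}}_{\mu}(v)-S_{n}(v)\Vert<\varepsilon/2$, and it remains to produce $\delta\in\text{conv}\{\rho_{g}\mid g\in G\}$ with $\sup_{v\in T}\Vert S_{n}(v)-\delta(v)\Vert<\varepsilon/2$.

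Next I would show that $S_{n}$ lies in the point-weak closed convex hull $\mathcal{D}$ of $\{\rho_{g}\mid g\in G\}$ in $\mathcal{B}(V)$. By Remark \ref{BPRemarks}(ii) we have $\rho_{\mu}\in\mathcal{D}$. Right multiplication $T^{\prime}\mapsto T^{\prime}\rho_{g}$ is point-weak continuous, because paired against $\phi\in V^{\ast}$ and evaluated at $v$ it equals $\phi(T^{\prime}(\rho_{g}v))$, i.e.\ evaluation at the fixed vector $\rho_{g}v$ against $\phi$; and left multiplication $T^{\prime}\mapsto\rho_{\mu}T^{\prime}$ is point-weak continuous, because that pairing equals $(\phi\circ\rho_{\mu})(T^{\prime}(v))$ with $\phi\circ\rho_{\mu}\in V^{\ast}$. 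Right multiplication by $\rho_{g}$ maps $\{\rho_{h}\}$ into itself, hence maps $\mathcal{D}$ into $\mathcal{D}$; consequently $\rho_{\mu}\rho_{g}\in\mathcal{D}$ for all $g$, so left multiplication by $\rho_{\mu}$ maps each $\rho_{g}$ into $\mathcal{D}$ and therefore maps $\mathcal{D}$ into $\mathcal{D}$. Since $\rho_{\mu}\in\mathcal{D}$ and $\text{id}=\rho_{e}\in\mathcal{D}$, induction gives $\rho_{\mu}^{i}\in\mathcal{D}$ for all $i$, and the convex combination $S_{n}$ therefore lies in $\mathcal{D}$. Working with the $S_{n}$ (rather than with $\widetilde{\mathbb{E}}_{\mu}$ directly) and at the operator level (rather than vector by vector) is exactly what makes the finite set $T$ accessible simultaneously; it also avoids iterated convolutions of Baire measures, which need not be well behaved on a general topological group.

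Finally I would run the separation argument. The set $K:=\{(\delta(v))_{v\in T}\mid\delta\in\text{conv}\{\rho_{g}\}\}\subseteq V^{T}$ is convex, and the goal is $(S_{n}(v))_{v\in T}\in\overline{K}$. If this failed, Hahn--Banach separation in $V^{T}$ would produce functionals $(\phi_{v})_{v\in T}$ in $(V^{\ast})^{T}$ and $\alpha\in\mathbb{R}$ with $\text{Re}\sum_{v\in T}\phi_{v}(\delta(v))\le\alpha<\text{Re}\sum_{v\in T}\phi_{v}(S_{n}(v))$ for all $\delta\in\text{conv}\{\rho_{g}\}$; in particular $\text{Re}\sum_{v\in T}\phi_{v}(\rho_{g}(v))\le\alpha$ for every $g\in G$. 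But $T^{\prime}\mapsto\text{Re}\sum_{v\in T}\phi_{v}(T^{\prime}(v))$ is a point-weak continuous real-linear functional $\psi$ on $\mathcal{B}(V)$, and it is bounded above on $\{\rho_{g}\}$ since $\Vert\rho_{g}\Vert\le M$; hence the half-space $\{\psi\le\alpha\}$ is point-weak closed, convex, and contains $\{\rho_{g}\}$, so it contains $\mathcal{D}\ni S_{n}$, contradicting $\psi(S_{n})>\alpha$. Therefore $(S_{n}(v))_{v\in T}\in\overline{K}$, which yields the required $\delta$, and combining with the estimate from the second paragraph via the triangle inequality completes the proof. The only genuinely delicate point is the passage highlighted above: upgrading the per-vector membership $\rho_{\mu}^{i}(v)\in\overline{\text{conv}}^{\Vert\cdot\Vert}(G.v)$ to the operator-level statement $S_{n}\in\mathcal{D}$ that drives the simultaneous estimate.
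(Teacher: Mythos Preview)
Your proof is correct and follows essentially the same two-step strategy as the paper: first use the mean ergodic theorem (Proposition \ref{EquivalentCharacterizations}/Theorem \ref{ErgodicTheorem}) to replace $\widetilde{\mathbb{E}}_{\mu}$ by a Ces\`aro average $S_{n}$, and then solve the simultaneous approximation problem on $T$ via Hahn--Banach separation in the finite product $V^{T}$. The only difference is packaging: the paper passes to the diagonal representation $\overline{\rho}$ on $V^{n}$ and invokes Remark \ref{BPRemarks}(ii) there (approximating each $\overline{\rho}_{\mu}^{l}(\mathbf{v})$ separately by some $\overline{\delta}_{l}$ and then averaging), whereas you stay at the operator level, show once that $S_{n}$ lies in the point-weak closed convex hull of $\{\rho_{g}\}$, and run the separation argument directly. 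Both routes unwrap to the same Hahn--Banach step, so neither buys anything the other does not.
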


\begin{proof} Let $\{v_{1},...,v_{n}\}\subseteq V$ be a finite subset and $\varepsilon>0$. By Proposition \ref{EquivalentCharacterizations} there exists an integer $N\in\mathbb{N}$ with $\max_{1\leq i\leq n}\Vert\widetilde{\mathbb{E}}_{\mu}(v_{i})-\frac{1}{N}\sum_{l=0}^{N-1}\rho_{\mu}^{l}(v_{i})\Vert<\frac{\varepsilon}{2}$. Set $\mathbf{v}:=(v_{1},...,v_{n})\in V^{n}$ and consider the Banach space $V^{n}:=\bigoplus_{1\leq i\leq n}V$ with the norm $\Vert(w_{1},...,w_{n})\Vert_{n}:=\max_{1\leq i\leq n}\Vert w_{i}\Vert$ and the diagonal representation $\overline{\rho}:G\rightarrow\text{GL}(V^{n})$. Then $(V^{n},\overline{\rho})$ is a weakly operator continuous uniformly bounded Banach $G$-module. By $(V^{n})^{\ast}\cong(V^{\ast})^{n}$ we furthermore have that $(V^{n},\overline{\rho})$ is weakly uniquely $\mu$-stationary with $\overline{\rho}_{\mu}(\mathbf{w})=(\rho_{\mu}(w_{1}),...,\rho_{\mu}(w_{n}))$ for all $\mathbf{w}:=(w_{1},...,w_{n})\in V^{n}$. Remark \ref{BPRemarks} (ii) implies the existence of elements $\overline{\delta}_{0},...,\overline{\delta}_{N-1}\in\text{conv}\{\overline{\rho}_{g}\mid g\in G\}$ with $\max_{1\leq l\leq N-1}\Vert\overline{\rho}_{\mu}^{l}(\mathbf{v})-\overline{\delta}_{l}(\mathbf{v})\Vert_{n}<\frac{\varepsilon}{2}$. Denote the corresponding elements in $\text{conv}\{\rho_{g}\mid g\in G\}$ by $\delta_{0},...,\delta_{N-1}$ and set $\delta:=\frac{1}{N}\sum_{l=0}^{N-1}\delta_{l}$. Then, 
\begin{eqnarray*}
 &  & \max_{1\leq i\leq n}\Vert\widetilde{\mathbb{E}}_{\mu}(v_{i})-\delta(v_{i})\Vert\\
 & \leq & \frac{\varepsilon}{2}+\frac{1}{N}\sum_{l=0}^{N-1}\left(\max_{1\leq i\leq n}\Vert\rho_{\mu}^{l}(v_{i})-\delta_{l}(v_{i})\Vert\right)\\
 & = & \frac{\varepsilon}{2}+\frac{1}{N}\sum_{l=0}^{N-1}\Vert\overline{\rho}_{\mu}^{l}(\mathbf{v})-\overline{\delta}_{l}(\mathbf{v})\Vert_{n}\\
 & < & \varepsilon.
\end{eqnarray*}
This finishes the proof. \end{proof}

Definition \ref{WeaklyUniquelyStationary} is inspired by the notion of uniquely stationary actions of groups on C$^{\ast}$-algebras, as introduced by Hartman and Kalantar in \cite{HartmanKalantar}. Recall that for an action $\alpha:G\rightarrow\text{Aut}(A)$ of a countable discrete group $G$ on a unital C$^{\ast}$-algebra $A$ by automorphism and $\mu\in\text{Prob}(G)$ a state $\tau$ on $A$ is said to be \emph{$\mu$-stationary}, if $\tau=\sum_{g\in G}\mu(g)\alpha_{g}^{\ast}(\tau)$. By \cite[Proposition 4.2]{HartmanKalantar} every action $(G,\mu)\curvearrowright A$ admits a $\mu$-stationary state. It is called \emph{uniquely stationary} if there exists a unique $\mu$-stationary state on $A$.

Examples of uniquely stationary actions can be found in \cite{HartmanKalantar} and \cite{BaderBoutonnetHoudayerPeterson}. For instance, by \cite[Theorem 5.1]{HartmanKalantar} the reduced group C$^{\ast}$-algebra $C_{r}^{\ast}(G)$ of a countable discrete group $G$ is \emph{simple} (i.e. it contains no non-trivial two-sided closed ideal) if and only if there exists a probability measure $\mu$ on $G$ such that the action $(G,\mu)\curvearrowright C_{r}^{\ast}(G)$ by inner automorphisms is uniquely stationary. Similarly, \cite[Proposition 3.7]{BaderBoutonnetHoudayerPeterson} states that for any non-amenable charmenable countable discrete group $G$ with trivial amenable radical and any unitary representation $\pi:G\rightarrow\mathcal{U}(\mathcal{H})$ of $G$ there exists a probability measure $\mu$ on $G$ whose support generates the whole group and such that the C$^{\ast}$-algebra generated by $\pi$ carries a unique $\mu$-stationary state. \\

The purpose of the following lemma is to clear up the connection between our notion and the one of Hartman and Kalantar.

\begin{lemma} \label{Connection} Let $A$ be a unital C$^{\ast}$-algebra and $\alpha:G\rightarrow\text{Aut}(A)$ an action of a countable discrete group $G$ by automorphism. Let furthermore $\mu$ be a probability measure on $G$. Then the following statements hold: 
\begin{enumerate}
\item If $(G,\breve{\mu})\curvearrowright A$ is uniquely stationary, then the isometric linear representation $g\mapsto\alpha_{g}$ is weakly uniquely $\mu$-stationary; 
\item If $A^{\mu}=\mathbb{C}1$, then $(G,\breve{\mu})\curvearrowright A$ is uniquely stationary if and only if $g\mapsto\alpha_{g}$ is weakly uniquely $\mu$-stationary. 
\end{enumerate}
\end{lemma}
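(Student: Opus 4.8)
The plan is to unwind the two definitions and show they coincide via a dictionary between $\mu$-stationary states on $A$ and $\rho_{\breve{\mu}}^{\ast}$-invariant functionals on $A$ (equivalently $\rho_\mu^\ast$-invariant via the $\check\mu$-bookkeeping of the Markov operators). For a countable discrete $G$ and $\mu \in \mathrm{Prob}(G)$, write $\rho_g := \alpha_g \in \mathcal{B}(A)$; since $G$ is countable the representation is BP-integrable (Remark \ref{BPRemarks} (i)), the Markov operator is $\rho_\mu(a) = \sum_{g} \mu(g)\alpha_g(a)$, and on the dual $\rho_\mu^\ast(\phi) = \phi \circ \rho_{\check\mu} = \sum_g \check\mu(g)\,\phi\circ\alpha_g = \sum_g \mu(g)\, \alpha_{g^{-1}}^\ast(\phi) = \sum_g \mu(g)\,\alpha_g^\ast(\phi)$, where $\alpha_g^\ast(\phi) := \phi\circ\alpha_{g^{-1}}$ is the dual action. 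Hence a state $\tau$ is $\mu$-stationary in the sense of Hartman--Kalantar precisely when $\rho_\mu^\ast(\tau) = \tau$, i.e. $\tau \in (A^\ast)^{\breve\mu}$. Also, the fixed-point subspace $A^{\breve\mu} = \{a : \rho_{\breve\mu}(a) = a\}$; but since the roles of $\mu$ and $\breve\mu$ match up (a $\breve\mu$-stationary state corresponds to $\rho_{\breve\mu}^\ast = \rho_\mu^{\ast\ast}$-type invariance — I would carefully track which of $\mu$, $\breve\mu$ appears and state the final bookkeeping so that ``$(G,\breve\mu)\curvearrowright A$ uniquely stationary'' matches ``$g\mapsto\alpha_g$ weakly uniquely $\mu$-stationary''), the upshot is: uniquely $\breve\mu$-stationary means $(A^\ast)^{\breve\mu}$ is one-dimensional, spanned by a state, while weakly uniquely $\mu$-stationary means every nonzero $\phi \in (A^\ast)^{\breve\mu}$ is nonzero on $A^\mu$.

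For (1): assume $(G,\breve\mu)\curvearrowright A$ is uniquely stationary, so $(A^\ast)^{\breve\mu} = \mathbb{C}\tau$ for the unique $\breve\mu$-stationary state $\tau$. By \cite[Proposition 4.2]{HartmanKalantar} (applied to the relevant measure) such a $\tau$ exists; being a state, $\tau(1) = 1 \neq 0$. The key observation is that $1 \in A^\mu$: indeed $\rho_\mu(1) = \sum_g \mu(g)\alpha_g(1) = \sum_g \mu(g) 1 = 1$ since automorphisms are unital. So any nonzero $\phi \in (A^\ast)^{\breve\mu}$ is a nonzero scalar multiple of $\tau$, hence $\phi(1) \neq 0$ with $1 \in A^\mu$; this is exactly the weak unique $\mu$-stationarity condition. (I should double-check the exact correspondence of $\mu$ versus $\breve\mu$ here, since weakly uniquely $\mu$-stationary is defined via functionals in $(V^\ast)^{\breve\mu}$ — so the $\breve\mu$ in the hypothesis of (1) lines up directly with the $\breve\mu$ in Definition \ref{WeaklyUniquelyStationary}, which is reassuring.)

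For (2): now additionally assume $A^\mu = \mathbb{C}1$. The forward direction is (1). For the converse, assume $g\mapsto\alpha_g$ is weakly uniquely $\mu$-stationary; I must show $(A^\ast)^{\breve\mu}$ contains only one state, equivalently is one-dimensional. Take $\phi_1, \phi_2 \in (A^\ast)^{\breve\mu}$ both states. By weak unique $\mu$-stationarity each nonzero element of $(A^\ast)^{\breve\mu}$ does not vanish on $A^\mu = \mathbb{C}1$; so the linear functional $\lambda \mapsto (\phi_1 - \lambda\phi_2)(1) = 1 - \lambda$ shows that $\phi_1 - \phi_2$ vanishes on $A^\mu$, hence — being in $(A^\ast)^{\breve\mu}$ — must be zero. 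Thus $\phi_1 = \phi_2$: the stationary state is unique. The main obstacle I anticipate is purely bookkeeping: getting the $\mu$/$\breve\mu$ and the dual-action conventions to line up consistently, in particular verifying that ``$\mu$-stationary state'' as defined via $\tau = \sum_g \mu(g)\alpha_g^\ast(\tau)$ really is the same as $\rho_{\check\mu}^\ast$-invariance rather than $\rho_\mu^\ast$-invariance, and hence that the measure appearing in the hypotheses of (1) and (2) is correctly $\breve\mu$; once that dictionary is pinned down, both implications are short, resting only on unitality of automorphisms (giving $1 \in A^\mu$) and the existence of a stationary state from \cite{HartmanKalantar}.
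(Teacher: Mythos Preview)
Your argument for part (2) is correct and in fact more direct than the paper's: you observe that if $\phi_1,\phi_2\in(A^\ast)^{\breve\mu}$ are states then $\phi_1-\phi_2$ lies in $(A^\ast)^{\breve\mu}$ and vanishes on $A^\mu=\mathbb{C}1$, hence must be zero by the very definition of weak unique $\mu$-stationarity. The paper instead invokes Proposition~\ref{EquivalentCharacterizations} to produce the projection $\widetilde{\mathbb{E}}_\mu$ (which, under $A^\mu=\mathbb{C}1$, is a $\breve\mu$-stationary state) and then shows via the mean ergodic limit that any other $\breve\mu$-stationary state agrees with it. Your route avoids the ergodic theorem entirely.

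However, part (1) has a genuine gap. You assert that ``uniquely $\breve\mu$-stationary means $(A^\ast)^{\breve\mu}$ is one-dimensional, spanned by a state'' and then that ``any nonzero $\phi\in(A^\ast)^{\breve\mu}$ is a nonzero scalar multiple of $\tau$''. But the definition of uniquely stationary only says there is a unique $\breve\mu$-stationary \emph{state}; it does not immediately say anything about arbitrary invariant functionals. A priori there could be a nonzero self-adjoint $\phi\in(A^\ast)^{\breve\mu}$ with $\phi(1)=0$, and your argument would not detect it. The paper closes this gap via the Jordan decomposition: any nonzero $\phi\in(A^\ast)^{\breve\mu}$ splits (after taking hermitian parts) as $\phi=\tau_+-\tau_-$ with $\tau_\pm$ positive, and since the $\alpha_g$ are $\ast$-automorphisms the \emph{uniqueness} of the Jordan decomposition forces $\tau_+,\tau_-$ to be $\breve\mu$-stationary as well. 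Only then does unique stationarity give that each is a multiple of $\tau$, whence $\phi(1)\neq 0$. You should add this step; without it the implication in (1) is not established.
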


\begin{proof} \emph{About (1)}: Assume that $(G,\breve{\mu})\curvearrowright A$ is uniquely stationary and let $\tau\in\mathcal{S}(A)$ be the corresponding $\breve{\mu}$-stationary state. Since $G$ is countable, the representation $g\mapsto\alpha_{g}$ is BP-integrable. The uniqueness of the Jordan decomposition furthermore implies that every functional $\phi\in(A^{\ast})^{\breve{\mu}}\setminus\{0\}$ decomposes as $\phi=\tau_{+}-\tau_{-}$, where both $\tau_{+}$ and $\tau_{-}$ are $\breve{\mu}$-stationary positive functionals on $A$. But then, by our assumption, $\phi$ must be a non-trivial multiple of $\tau$, so in particular $\phi(1)\neq0$. It follows that the representation is weakly uniquely $\mu$-stationary.

\emph{About (2):} Assume that $A^{\mu}=\mathbb{C}1$. It remains to show that, if $\alpha : g\mapsto\alpha_{g}$ is weakly uniquely $\mu$-stationary, the action $(G,\breve{\mu})\curvearrowright A$ is uniquely stationary. So assume that $g\mapsto\alpha_{g}$ is weakly uniquely $\mu$-stationary. By Proposition \ref{EquivalentCharacterizations}, $\widetilde{\mathbb{E}}_{\mu}$ defines a well-defined $\breve{\mu}$-stationary state on $A$. Assume that $\tau\in\mathcal{S}(A)$ is another such state, then (by a similar calculation as in the proof of Proposition \ref{EquivalentCharacterizations}), 
\begin{eqnarray*}
\tau(a) & = & \lim_{n\rightarrow\infty}\frac{1}{n}\sum_{i=0}^{n-1}\left(\tau\circ\alpha_{\mu}^{i}\right)(a)=\lim_{n\rightarrow\infty}\left(\tau\circ\iota_{A}^{-1}\right)\left(\frac{1}{n}\sum_{i=0}^{n-1}(\alpha_{\mu}^{\ast\ast})^{i}(\widehat{a})\right)\\
 & = & \left(\tau\circ\iota_{A}^{-1}\right)\left(\mathbb{E}_{\mu}(\widehat{a})\right)=\widetilde{\mathbb{E}}_{\mu}(a)
\end{eqnarray*}
for all $a\in A$. \end{proof}

\vspace{3mm}


\subsection{Weakly almost periodic representations\label{subsec:Weakly-almost-periodic}}

In \cite{BaderRosendalSauer} the study of reduced continuous cohomology of weakly almost periodic representations was initiated.

\begin{definition}[{\cite[Definition 1]{BaderRosendalSauer}}] A linear representation $\rho:G\rightarrow\text{GL}(V)$ of a topological group $G$ on a Banach space $V$ is called \emph{weakly almost periodic} (or simply \emph{wap}) if for every $v\in V$ the $G$-orbit $G.v:=\{\rho_{g}(v)\mid g\in G\}\subseteq V$ is relatively weakly compact in $V$. \end{definition}

By the discussion in \cite{BaderRosendalSauer}, all linear representations on reflexive Banach spaces are weakly almost periodic. Another source of examples is given by the natural $L^{1}$-representations induced by actions via measure preserving transformations on probability spaces, see \cite[Example 10]{BaderRosendalSauer}.

As the following theorem demonstrates, weakly almost periodic representations fall within the setting of the previous sections.

\begin{theorem} \label{WeaklyAlmostPeriodicSetting} Let $G$ be a topological group and $(V,\rho)$ a wap weakly operator continuous Banach $G$-module. Then $(V,\rho)$ is BP-integrable and weakly uniquely $\mu$-stationary for every $\mu\in\mathcal{P}_{\sigma}(G)$. \end{theorem}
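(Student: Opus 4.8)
The plan is to treat the two assertions separately: BP-integrability will follow from Krein's theorem together with a Hahn--Banach separation argument (which sidesteps any separability or countability hypothesis), and weak unique $\mu$-stationarity will then be deduced from the classical mean ergodic theorem via the equivalent characterizations of Proposition \ref{EquivalentCharacterizations}.

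For BP-integrability I would fix $\mu\in\mathcal{P}_{\sigma}(G)$ and $v\in V$ and put $\mathcal{C}_{v}:=\overline{\text{conv}}^{\Vert\cdot\Vert}(G.v)\subseteq V$. Since $G.v$ is relatively weakly compact by hypothesis, Krein's theorem (the closed convex hull of a relatively weakly compact subset of a Banach space is weakly compact) shows that $\mathcal{C}_{v}$ is weakly compact, and as $\iota_{V}$ is continuous from $(V,\mathrm{weak})$ to $(V^{\ast\ast},\mathrm{weak}^{\ast})$ it follows that $\iota_{V}(\mathcal{C}_{v})$ is a convex, weak$^{\ast}$-compact (hence weak$^{\ast}$-closed) subset of $V^{\ast\ast}$. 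Weak operator continuity makes $g\mapsto\phi(\rho_{g}(v))$ continuous, hence Baire measurable, bounded, and $\mu$-integrable, so the element $\Psi_{v}\colon\phi\mapsto\int_{G}\phi(\rho_{g}(v))\,d\mu(g)$ from (\ref{MarkovOperator}) is a well-defined point of $V^{\ast\ast}$. If $\Psi_{v}\notin\iota_{V}(\mathcal{C}_{v})$, then since the continuous dual of $(V^{\ast\ast},\mathrm{weak}^{\ast})$ is $V^{\ast}$, Hahn--Banach separation would give $\phi\in V^{\ast}$ and $c\in\mathbb{R}$ with $\operatorname{Re}\Psi_{v}(\phi)>c\geq\operatorname{Re}\phi(w)$ for all $w\in\mathcal{C}_{v}$; but $\rho_{g}(v)\in G.v\subseteq\mathcal{C}_{v}$ for every $g$, so integrating yields $\operatorname{Re}\Psi_{v}(\phi)=\int_{G}\operatorname{Re}\phi(\rho_{g}(v))\,d\mu(g)\leq c$, a contradiction. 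Hence $\Psi_{v}\in\iota_{V}(\mathcal{C}_{v})\subseteq\text{im}(\iota_{V})$, so $\rho$ is $\mu$-integrable with $\rho_{\mu}(v)\in\mathcal{C}_{v}$; since $\mu$ was arbitrary, $(V,\rho)$ is BP-integrable.

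For weak unique $\mu$-stationarity I would fix $\mu\in\mathcal{P}_{\sigma}(G)$ and first record that $\mathcal{C}_{v}$ is $\rho_{\mu}$-invariant: by Remark \ref{BPRemarks} (ii) one has $\rho_{\mu}(\rho_{g}(v))\in\overline{\text{conv}}^{\Vert\cdot\Vert}\{\rho_{hg}(v)\mid h\in G\}\subseteq\mathcal{C}_{v}$ for all $g$, and invariance on all of $\mathcal{C}_{v}$ then follows from linearity and norm continuity of $\rho_{\mu}$. Since $v\in\mathcal{C}_{v}$, this shows $\{\rho_{\mu}^{n}(v)\mid n\in\mathbb{N}\}\subseteq\mathcal{C}_{v}$, so the $\rho_{\mu}$-orbit of each vector is relatively weakly compact; combined with $\sup_{n}\Vert\rho_{\mu}^{n}\Vert\leq M:=\sup_{g\in G}\Vert\rho_{g}\Vert$ (again Remark \ref{BPRemarks} (ii)), the classical mean ergodic theorem (see e.g. \cite[Chapter 2.1]{Krengel85}) applies to $\rho_{\mu}\in\mathcal{B}(V)$ and gives norm convergence of $\frac{1}{n}\sum_{i=0}^{n-1}\rho_{\mu}^{i}(v)$ for every $v\in V$. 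Applying the isometry $\iota_{V}$ and using $\iota_{V}\circ\rho_{\mu}=\rho_{\mu}^{\ast\ast}\circ\iota_{V}$ from (\ref{eq:Compatibility}), I would conclude that $\frac{1}{n}\sum_{i=0}^{n-1}(\rho_{\mu}^{\ast\ast})^{i}(\widehat{v})$ converges in norm for every $v\in V$, i.e. condition (3) of Proposition \ref{EquivalentCharacterizations} holds; hence $(V,\rho)$ is weakly uniquely $\mu$-stationary. (Alternatively, the mean ergodic theorem directly supplies the splitting $V=\overline{\text{im}(\Delta_{\mu})}\oplus V^{\mu}$, which is condition (2).)

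I expect the BP-integrability step to be the main obstacle: here $V$ is not assumed separable nor $G$ countable, so $\{\rho_{g}(v)\mid g\in G\}$ need not be essentially separably valued and Pettis' measurability theorem (as used in Remark \ref{BPRemarks} (i)) is unavailable. The role of the wap hypothesis is precisely that, once Krein's theorem upgrades relative weak compactness of $G.v$ to weak compactness of $\mathcal{C}_{v}$, the Hahn--Banach separation argument forces $\Psi_{v}$ back into $\iota_{V}(V)$. After that, the second assertion is essentially a citation to standard mean ergodic theory, the only additional observation being that $\mathcal{C}_{v}$ is a $\rho_{\mu}$-invariant weakly compact set containing the $\rho_{\mu}$-orbit of $v$.
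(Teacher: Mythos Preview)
Your proof is correct and rests on the same two pillars as the paper's: Krein's theorem to pass from relative weak compactness of $G.v$ to weak compactness of $\mathcal{C}_{v}$, and then one of the equivalent conditions in Proposition~\ref{EquivalentCharacterizations}. The differences are tactical rather than strategic. For BP-integrability, the paper invokes Lemma~\ref{MeasureDensity} to approximate $\Psi_{v}$ weak$^{\ast}$ by elements $\widehat{v_{\lambda}}$ with $v_{\lambda}\in\text{conv}(G.v)$ and then extracts a weakly convergent subnet inside $\mathcal{C}_{v}$, whereas you argue by Hahn--Banach separation from the weak$^{\ast}$-compact set $\iota_{V}(\mathcal{C}_{v})$; these are dual formulations of the same fact. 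For weak unique $\mu$-stationarity, the paper verifies condition~(4) of Proposition~\ref{EquivalentCharacterizations} directly---observing that $\mathbb{E}_{\mu}(\widehat{v})$ lies in the weak$^{\ast}$-closure of $\iota_{V}(\mathcal{C}_{v})$, which by a second use of Krein's theorem coincides with $\iota_{V}(\mathcal{C}_{v})\subseteq\text{im}(\iota_{V})$---while you verify condition~(3) by citing the Yosida--Kakutani mean ergodic theorem as an external result. Your route is marginally quicker since the mean ergodic theorem is already packaged in \cite{Krengel85}; the paper's route has the virtue of staying entirely within the framework it has constructed and of avoiding an additional citation.
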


\begin{proof} It was argued in \cite{BaderRosendalSauer} that wap representations are uniformly bounded. Lemma \ref{MeasureDensity} implies that for every $v\in V$ and $\mu\in\mathcal{P}_{\sigma}(G)$ the operator $f_{v}\in V^{\ast\ast}$ given by $f_{v}(\phi):=\int_{G}\phi\left(\rho_{g}(v)\right)d\mu(g)$ is contained in the weak$^{\ast}$-closure of the convex hull of $\{\rho_{g}^{\ast\ast}(\widehat{v})\mid g\in G\}\subseteq\mathcal{B}(V^{\ast\ast})$. We can hence choose a net $(v_{\lambda})_{\lambda\in\Lambda}\subseteq\text{conv}(G.v)$ for which $\widehat{v_{\lambda}}\rightarrow f_{v}$ in the weak$^{\ast}$-topology. With the assumption that $(V,\rho)$ is weakly almost periodic, Krein's theorem (see e.g. \cite[Theorem 3.133]{FHHMZ}) gives that the closed convex hull of $G.v$ is weakly compact in $V$. By possibly going over to a subnet we can hence assume that $v_{\lambda}$ weakly converges to an element $w\in V$. But then $f_{v}(\phi)=\lim_{\lambda}\widehat{v_{\lambda}}(\phi)=\lim_{\lambda}\phi(v_{\lambda})=\phi(w)=\widehat{w}(\phi)$ for every $\phi\in V^{\ast}$ so that $f_{v}=\widehat{w}\in\text{im}(\iota_{V})$. It follows that $(V,\rho)$ is BP-integrable.

Let $\mathcal{C}$ be the norm closure of the convex hull of $G.v$. By Remark \ref{BPRemarks} (ii), $\rho_{\mu}^{i}(v)\in\mathcal{C}$ for every $i\in\mathbb{N}$. Since the operator $\mathbb{E}_{\mu}$ from Proposition \ref{Existence} is contained in the point-weak$^{\ast}$ closure of $\text{conv}\{(\rho_{\mu}^{\ast\ast})^{i}\mid i\in\mathbb{N}\}\subseteq\mathcal{B}(V^{\ast\ast})$, the image $\mathbb{E}_{\mu}(\widehat{v})$ is an element in the weak$^{\ast}$-closure of $\iota_{V}(\mathcal{C})\subseteq V^{\ast\ast}$. But this set coincides (by applying Krein's theorem once again) with the image of the weak closure of $\text{conv}(G.v)$ under $\iota_{V}$ and is therefore contained in $\text{im}(\iota_{V})$. It follows that $\mathbb{E}_{\mu}(\widehat{v})\in\text{im}(\iota_{V})$ for every $v\in V$ and hence, by Proposition \ref{EquivalentCharacterizations}, $(V,\rho)$ is weakly uniquely $\mu$-stationary. \end{proof}

In combination with \cite[Lemma 17]{BaderRosendalSauer}, Theorem \ref{FixedPointApproximation} and Theorem \ref{WeaklyAlmostPeriodicSetting} immediately imply the following corollary.

\begin{corollary} \label{WapConditional} Let $G$ be a topological group and $(V,\rho)$ a wap Banach $G$-module with $V$ being separable. Then $(V,\rho)$ is BP-integrable and there exists a Baire probability measure $\mu$ on $G$ with $\mathbb{E}_{\mu}(\widehat{v})\in\iota_{V}(V^{G})$ for every $v\in V$, where $\mathbb{E}_{\mu}$ is an operator as in Proposition \ref{Existence}. \end{corollary}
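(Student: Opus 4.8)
The plan is to assemble the statement from the three quoted results. First, $(V,\rho)$ is BP-integrable: this is immediate from Theorem~\ref{WeaklyAlmostPeriodicSetting} (or already from Remark~\ref{BPRemarks}(i), using separability of $V$). Moreover Theorem~\ref{WeaklyAlmostPeriodicSetting} shows that $(V,\rho)$ is weakly uniquely $\mu$-stationary for \emph{every} $\mu\in\mathcal{P}_{\sigma}(G)$, so by Proposition~\ref{EquivalentCharacterizations} we have $\mathbb{E}_{\mu}(\widehat v)\in\text{im}(\iota_{V})$ for all $v\in V$ and every choice of $\mu$. Hence, once we exhibit a single $\mu$ with $\mathbb{E}_{\mu}(\widehat v)\in(V^{\ast\ast})^{G}$ for all $v$, we are done: writing $\mathbb{E}_{\mu}(\widehat v)=\widehat w$ with $w\in V$, the identity $\rho_g^{\ast\ast}(\widehat w)=\widehat w$ becomes $\widehat{\rho_g(w)}=\widehat w$ by $G$-equivariance of $\iota_{V}$, and injectivity of $\iota_{V}$ forces $w\in V^{G}$, i.e.\ $\mathbb{E}_{\mu}(\widehat v)\in\iota_{V}(V^{G})$.

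To produce such a $\mu$ I would apply Theorem~\ref{FixedPointApproximation}, which is legitimate here since $V$ is separable and $(V,\rho)$ is uniformly bounded and weakly operator continuous. Its only substantive hypothesis is that for all $\varepsilon>0$ and $v\in V$ there is some $\mu\in\mathcal{P}_{\sigma}(G)$ with $\text{dist}(\rho_{\mu}(v),V^{G})<\varepsilon$, and this is exactly where \cite[Lemma~17]{BaderRosendalSauer} enters: for a wap representation the closed convex hull of the orbit $G.v$ contains a $\rho(G)$-fixed vector $Pv\in V^{G}$. By Mazur's theorem the weak and norm closures of the convex set $\text{conv}(G.v)$ coincide, so $Pv$ is a norm-limit of finite convex combinations of orbit elements; given $\varepsilon>0$, pick $\sum_{k}\lambda_k\rho_{g_k}(v)$ with $\Vert\sum_{k}\lambda_k\rho_{g_k}(v)-Pv\Vert<\varepsilon$ and set $\mu:=\sum_{k}\lambda_k\delta_{g_k}$. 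This is a finite convex combination of point masses, hence lies in $\mathcal{P}_{\sigma}(G)$, and $\rho_{\mu}(v)=\sum_{k}\lambda_k\rho_{g_k}(v)$ straight from the definition of the Markov operator (cf.\ Remark~\ref{BPRemarks}(ii)), so $\text{dist}(\rho_{\mu}(v),V^{G})<\varepsilon$, as required.

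Feeding this into Theorem~\ref{FixedPointApproximation} yields a $\mu\in\mathcal{P}_{\sigma}(G)$ with $\mathbb{E}_{\mu}(\widehat v)\in(V^{\ast\ast})^{G}$ for every $v\in V$, and the observation of the first paragraph upgrades this to $\mathbb{E}_{\mu}(\widehat v)\in\iota_{V}(V^{G})$, which is the claim. I do not expect a genuine obstacle: the content is entirely carried by the three cited theorems, and the only point that needs a little care is the passage from the abstract fixed vector $Pv$ furnished by \cite[Lemma~17]{BaderRosendalSauer} to an honest Baire probability measure realizing the approximation hypothesis of Theorem~\ref{FixedPointApproximation}, which one handles via the coincidence of weak and norm closure on a convex set together with the (routine) verification that finitely supported probability measures belong to $\mathcal{P}_{\sigma}(G)$.
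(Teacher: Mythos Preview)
Your proposal is correct and follows precisely the route the paper intends: the corollary is stated there as an immediate consequence of \cite[Lemma~17]{BaderRosendalSauer}, Theorem~\ref{FixedPointApproximation} and Theorem~\ref{WeaklyAlmostPeriodicSetting}, and you have simply unpacked this by verifying the approximation hypothesis of Theorem~\ref{FixedPointApproximation} via the fixed vector from \cite[Lemma~17]{BaderRosendalSauer} and then upgrading $(V^{\ast\ast})^{G}$ to $\iota_{V}(V^{G})$ through Proposition~\ref{EquivalentCharacterizations}.
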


\vspace{3mm}



\section{Application to cohomology of groups\label{sec:Application-to-cohomology}}

In the following, we apply the results from the previous sections to the study of the reduced continuous cohomology of uniformly bounded representations.

\vspace{3mm}


\subsection{Complementation\label{subsec:Complementability}}

In \cite{Nowak17} Nowak initiated the study of the complementability of closed subspaces that appear in the cohomological framework beyond the setting of unitary representations on Hilbert spaces. For unitary representations results like this occur e.g. in Hodge theory, in the deduction of the vanishing of certain cohomology groups (see e.g. \cite{Garland73}), they were used in \cite{Ozawa18} to obtain a functional analytic proof of Gromov's polynomial growth theorem, and they are applied to rigidity questions for groups (see e.g. \cite{Shalom00}).

\begin{definition} A subspace $W$ of a topological vector space $V$ is \emph{complemented}, if there exists a subspace $W^{\prime}\subseteq V$ such that $V$ decomposes as a topological direct sum $V=W\oplus W^{\prime}$. \end{definition}

For linear representations on superreflexive Banach spaces and (more generally) weakly almost periodic representations, the complementation of $V^{G}$ in $V$ was studied in \cite{BaderFurmanGelanderMonod} and \cite{BaderRosendalSauer}. For $\mu$-integrable Banach modules, Proposition \ref{EquivalentCharacterizations} characterizes the complementability of the subspace of elements that are invariant with respect to the convolution with a given Baire probability measure.

The following statement is a refinement of the main theorem in \cite{Nowak17} (see also \cite[Example 6.1]{Rosendal22}).

\begin{proposition} \label{PiotrResult} Let $G$ be a topological group, $(V,\rho)$ a uniformly bounded strongly operator continuous Banach $G$-module, and $\mu\in\mathcal{P}_{\sigma}(G)$ a Baire probability measure. Assume that $\mu$ admits a null set $\mathcal{N}\subseteq G$ for which $G\setminus\mathcal{N}$ is contained in a compact set. If furthermore $\Vert\rho_{\mu}\Vert<1$, then $B^{1}(G,\rho)\subseteq Z^{1}(G,\rho)$ is a complemented closed subspace with

\[
Z^{1}(G,\rho)=B^{1}(G,\rho)\oplus\mathcal{H}_{\mu}^{1}(G,\rho),
\]
where $\mathcal{H}_{\mu}^{1}(G,\rho)$ is the subspace of all \emph{$\mu$-harmonic $1$-cocycles}, i.e. the subspace of all $b\in Z^{1}(G,\rho)$ with $\int_{G}b(h,e)d\mu(h)=0$. \end{proposition}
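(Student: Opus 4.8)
The plan is to construct an explicit projection from $Z^{1}(G,\rho)$ onto $\mathcal{H}^{1}_{\mu}(G,\rho)$ whose kernel is exactly $B^{1}(G,\rho)$, and to verify it is continuous in the compact-open topology. Working in the picture where $Z^{1}(G,\rho)=\{f\in C(G,V)\mid f(gh)=\rho_{g}(f(h))+f(g)\}$, note first that for $b\in Z^{1}(G,\rho)$ the cocycle identity gives $\int_{G}b(gh,e)\,d\mu(h)=\rho_{g}\!\left(\int_{G}b(h,e)\,d\mu(h)\right)+b(g,e)$, so if we set $v_{b}:=\int_{G}b(h,e)\,d\mu(h)\in V$ (well-defined since $\rho$ is $\mu$-integrable, after identifying $b(\cdot,e)$ with an element of $C(G,V)$ and using that $\mu$ is essentially supported on a compact set so the integral converges as a Pettis/Bochner integral), then the coboundary $g\mapsto v_{b}-\rho_{g}(v_{b})$ satisfies $\partial_{\mu}$ applied to it recovers $v_b - \rho_\mu(v_b)$ on the nose. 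The key point is that the condition $\Vert\rho_{\mu}\Vert<1$ makes $\Delta_{\mu}=\mathrm{id}_{V}-\rho_{\mu}$ invertible on $V$ via the Neumann series $\Delta_{\mu}^{-1}=\sum_{k\geq0}\rho_{\mu}^{k}$. Define $w_{b}:=\Delta_{\mu}^{-1}(v_{b})$ and let $P(b):=b-(g\mapsto w_{b}-\rho_{g}(w_{b}))$.

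Next I would check the three defining properties. First, $P(b)\in Z^{1}(G,\rho)$ since $Z^{1}$ is a linear subspace and the subtracted term is a (genuine) coboundary. Second, $P(b)$ is $\mu$-harmonic: $\int_{G}P(b)(h,e)\,d\mu(h)=v_{b}-\int_{G}(w_{b}-\rho_{h}(w_{b}))\,d\mu(h)=v_{b}-(w_{b}-\rho_{\mu}(w_{b}))=v_{b}-\Delta_{\mu}(w_{b})=v_{b}-v_{b}=0$. Third, if $b\in B^{1}(G,\rho)$, say $b(g,e)=u-\rho_{g}(u)$, then $v_{b}=u-\rho_{\mu}(u)=\Delta_{\mu}(u)$, hence $w_{b}=u$ and $P(b)=0$; conversely if $P(b)=0$ then $b$ is the coboundary of $w_b$, so $\ker P=B^{1}(G,\rho)$ exactly. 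Moreover $P$ restricted to $\mathcal{H}^{1}_{\mu}$ is the identity (for such $b$, $v_{b}=0$ so $w_{b}=0$), and $P^{2}=P$. Combined with $\mathcal{H}^{1}_{\mu}\cap B^{1}=\{0\}$ (immediate from the two computations above) and $Z^{1}=\ker P\oplus\mathrm{im}\,P$, this gives the algebraic direct sum decomposition $Z^{1}(G,\rho)=B^{1}(G,\rho)\oplus\mathcal{H}^{1}_{\mu}(G,\rho)$ with $\mathrm{im}\,P=\mathcal{H}^{1}_{\mu}(G,\rho)$.

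The remaining — and I expect main — obstacle is the topological content: showing $P$ is continuous for the compact-open topology, which then yields that $B^{1}$ is closed (as $\ker P$) and that the decomposition is a topological direct sum. The issue is that $b\mapsto v_{b}=\int_{G}b(h,e)\,d\mu(h)$ must be continuous from $C(G,V)$ with the topology of uniform convergence on compacta into $V$; this is where the hypothesis that $G\setminus\mathcal{N}$ lies in a compact set $K$ is essential, since then $\vert v_{b}\vert\leq M\sup_{h\in K}\Vert b(h,e)\Vert$ up to the normalization of $\mu$, i.e. $v_{b}$ depends continuously (indeed boundedly) on $b|_{K}$. Then $w_{b}=\Delta_{\mu}^{-1}(v_{b})$ depends continuously on $v_{b}$ since $\Delta_{\mu}^{-1}$ is a bounded operator, and the map $w\mapsto(g\mapsto w-\rho_{g}(w))$ is continuous from $V$ into $C(G,V)$ with the compact-open topology (uniform boundedness of $\rho$ plus: on a compact $K'\subseteq G$ the map is controlled by $(1+M)\Vert w\Vert$). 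Hence $P$ is continuous, $B^{1}(G,\rho)=\ker P$ is closed, $P$ is a continuous projection onto $\mathcal{H}^{1}_{\mu}(G,\rho)$, and $\mathrm{id}-P$ is a continuous projection onto $B^{1}(G,\rho)$; this is precisely the statement that the sum is a topological direct sum and that $B^{1}(G,\rho)$ is complemented. One should also record that $\mathcal{H}^{1}_{\mu}(G,\rho)=\ker(b\mapsto v_{b})\cap Z^{1}$ is automatically closed, being the preimage of $\{0\}$ under a continuous map.
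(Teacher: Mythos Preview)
Your proof is correct and follows essentially the same approach as the paper: both construct the splitting via the element $w_{b}=\Delta_{\mu}^{-1}\!\left(\int_{G}b(h,e)\,d\mu(h)\right)$ and the associated coboundary. The only cosmetic difference is that the paper names the projection onto $B^{1}(G,\rho)$ (your $\mathrm{id}-P$) rather than onto $\mathcal{H}^{1}_{\mu}(G,\rho)$, and identifies its kernel as $\mathcal{H}^{1}_{\mu}(G,\rho)$ using $V^{G}=\{0\}$ (which follows from $\Vert\rho_{\mu}\Vert<1$); your explicit verification of continuity via the compact essential support of $\mu$ is more detailed than the paper's ``one checks''.
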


\begin{proof} First note that Pettis' measurability theorem (see Remark \ref{BPRemarks}) implies the well-definedness of the operator $\rho_{\mu}$. Furthermore, the function $h\mapsto b(h,e)$ is Bochner integrable for every 1-cocycle $b\in Z^{1}(G,\rho)$. By $\Vert\rho_{\mu}\Vert<1$ the operator $\Delta_{\mu}$ is (boundedly) invertible, so that $f_{b}\in C^{1}(G,V)^{G}$ given by 
\[
f_{b}(g):=\rho_{g}\circ\Delta_{\mu}^{-1}\left(\int_{G}b(h,e)d\mu(h)\right)
\]
is well-defined. Define a linear map $P:Z^{1}(G,\rho)\rightarrow B^{1}(G,\rho)$ via $P(b):=\partial^{1}(f_{b})$. One checks that $P$ is continuous with $P|_{B^{1}(G,\rho)}=\text{id}$. It follows that $B^{1}(G,\rho)\subseteq Z^{1}(G,\rho)$ is closed with $Z^{1}(G,\rho)=B^{1}(G,\rho)\oplus\ker(P)$. An element $b\in Z^{1}(G,\rho)$ is contained in the kernel of $P$ if and only if $\Delta_{\mu}^{-1}\left(\int_{G}b(h,e)d\mu(h)\right)\in V^{G}=\{0\}$, that is $b\in\mathcal{H}_{\mu}^{1}(G,\rho)$. \end{proof}

\begin{remark} \emph{(i)} Proposition \ref{PiotrResult} in particular applies to the case of isometric representations without almost invariant vectors on uniformly convex Banach spaces, see e.g. \cite[Lemma 5.2]{Rosendal22}.

\emph{(ii)} Note that the representation in Proposition \ref{PiotrResult} is weakly uniquely $\mu$-stationary with $\mathbb{E}_{\mu}\equiv0$.

\emph{(iii)} Every unitary representation of a locally compact group admits a decomposition as in Proposition \ref{PiotrResult}, see e.g. \cite{Guichardet72}, \cite{BekkaValette97}, \cite{Creutz11}, \cite{Creutz22}. \end{remark}

In combination with the ideas in \cite{BaderRosendalSauer} our previous results can be used to obtain complementation results for subspaces in higher degrees. Statements like this have earlier been studied by Rosendal in \cite{Rosendal22} in a more restrictive setting.

Motivated by an important ingredient in the proof of Shalom's cohomological rigidity results for unitary representations of locally compact groups in \cite{Shalom00}, in \cite{BaderRosendalSauer}, by applying the Ryll-Nardzewski fixed point Theorem, Bader, Rosendal and Sauer proved an analog of the following theorem in the setting of strongly operator continuous wap representations.

\begin{theorem} \label{MainTheorem} Let $G$ be a topological group containing subgroups $N,C\leq G$ with $C\subseteq C_{G}(N)$, where $C_{G}(N)$ denotes the \emph{centralizer} of $N$ in $G$. Let furthermore $\mu\in\mathcal{P}_{\sigma}(C)$ be a Baire probability measure, $(V,\rho)$ a uniformly bounded, strongly operator continuous, weakly uniquely $\mu$-stationary Banach $G$-module, and $b\in Z^{n}(G,\rho)$ a $n$-cocycle. Then $\widetilde{\mathbb{E}}_{\mu}\circ(b|_{N^{n+1}})$ defines an element in $Z^{n}(N,\rho|_{N})$ which is almost cohomologous to the restriction $b|_{N^{n+1}}$. \end{theorem}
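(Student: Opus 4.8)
The plan is to follow the blueprint of Bader--Rosendal--Sauer, replacing their use of the Ryll--Nardzewski fixed point theorem (which produced an honest element of $\mathrm{conv}\{\rho_c\mid c\in C\}$ fixing a compact convex set) with the approximation result in Proposition \ref{ConvexHull}, which supplies averaging operators $\delta\in\mathrm{conv}\{\rho_c\mid c\in C\}$ approximating $\widetilde{\mathbb{E}}_\mu$ arbitrarily well on any finite set. First I would record the basic algebraic facts: because $\widetilde{\mathbb{E}}_\mu=\iota_V^{-1}\circ\mathbb{E}_\mu\circ\iota_V$ is a bounded projection onto $V^\mu$ commuting with $\rho_\mu$ and lying in the point-weak closed convex hull of $\{\rho_\mu^i\}$, and since every $\rho_c$ with $c\in C$ commutes with every $\rho_n$ with $n\in N$ (as $C\subseteq C_G(N)$), the operator $\widetilde{\mathbb{E}}_\mu$ commutes with $\rho_n$ for all $n\in N$. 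Consequently, applying $\widetilde{\mathbb{E}}_\mu$ pointwise to a cocycle $b\in Z^n(G,\rho)$ restricted to $N^{n+1}$ yields a function $\widetilde{\mathbb{E}}_\mu\circ(b|_{N^{n+1}})\colon N^{n+1}\to V^\mu\subseteq V$ that is again $N$-equivariant (equivariance is preserved because $\widetilde{\mathbb{E}}_\mu$ intertwines $\rho_n$) and still satisfies the cocycle identity $\partial^{n+1}=0$ (the differential is an alternating sum of coordinate-forgetting evaluations, so it commutes with post-composition by any bounded linear operator); continuity in the compact-open topology is preserved since $\widetilde{\mathbb{E}}_\mu$ is norm-bounded. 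Thus $\widetilde{\mathbb{E}}_\mu\circ(b|_{N^{n+1}})\in Z^n(N,\rho|_N)$.

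The heart of the argument is showing the two cocycles $b|_{N^{n+1}}$ and $\widetilde{\mathbb{E}}_\mu\circ(b|_{N^{n+1}})$ are \emph{almost cohomologous}, i.e. their difference lies in the closure $\overline{B^{n}(N,\rho|_N)}$ in the compact-open topology. Fix a compact set $K\subseteq N^{n+1}$ and $\varepsilon>0$. The key observation is a telescoping identity: for $c\in C$, since $\rho_c$ commutes with $\rho_n$ for $n\in N$, the function $(n_1,\dots,n_{n+1})\mapsto\rho_c(b(n_1,\dots,n_{n+1}))-b(n_1,\dots,n_{n+1})$ differs from a coboundary on $N$ by a controlled amount --- more precisely, using that $b$ is a $G$-cocycle and the commutation relations, one rewrites $\rho_c b(n_1,\dots,n_{n+1})$ via the cocycle identity applied to the element $c$ inserted appropriately, producing $b(cn_1,\dots,cn_{n+1})$-type terms that, by $G$-equivariance and $cn_i=n_ic$, collapse so that $(\rho_c-\mathrm{id})\circ b|_{N^{n+1}}$ equals $\partial^{n}$ of an explicit $(n-1)$-cochain on $N$ built from $b$ evaluated with one $c$-entry. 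Averaging over $c$ against a convex combination $\delta=\sum_j\lambda_j\rho_{c_j}$, $(\delta-\mathrm{id})\circ b|_{N^{n+1}}$ is likewise a genuine coboundary in $B^{n}(N,\rho|_N)$. Finally, by Proposition \ref{ConvexHull} applied to the (finite, since $K$ is compact and $b$ continuous, hence $b(K)$ is compact --- but $\mathrm{conv}$ requires finiteness, so one first covers $b(K)$ by finitely many $\varepsilon$-balls and chooses a finite set $T\subseteq b(K)$ that is $\varepsilon$-dense) set $T$, we obtain $\delta\in\mathrm{conv}\{\rho_c\mid c\in C\}$ with $\sup_{v\in T}\|\widetilde{\mathbb{E}}_\mu(v)-\delta(v)\|<\varepsilon$; combined with uniform continuity of $\widetilde{\mathbb{E}}_\mu$ and $\delta$ on the compact $b(K)$ and boundedness of all operators involved, this gives $\sup_{x\in K}\|\widetilde{\mathbb{E}}_\mu(b(x))-\delta(b(x))\|< C\varepsilon$ for a constant $C$ depending only on $M=\sup_g\|\rho_g\|$. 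Hence on $K$ the cochain $\widetilde{\mathbb{E}}_\mu\circ b|_{N^{n+1}}-b|_{N^{n+1}}$ is within $C\varepsilon$ of $(\delta-\mathrm{id})\circ b|_{N^{n+1}}\in B^n(N,\rho|_N)$; as $K,\varepsilon$ were arbitrary, the difference lies in $\overline{B^n(N,\rho|_N)}$.

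The main obstacle --- and the step requiring genuine care rather than bookkeeping --- is the telescoping identity expressing $(\rho_c-\mathrm{id})\circ b|_{N^{n+1}}$ as a precise coboundary $\partial^n(\text{something})$ on $N$. This is exactly the place where one must exploit that $C$ \emph{centralizes} $N$: the reindexing $cn_i=n_ic$ is what lets the inserted $c$-coordinate be "pushed through" the cocycle identity so that the residual terms assemble into a differential over $N$ rather than over the larger group. I would model this on the corresponding computation in \cite{BaderRosendalSauer} for wap modules, where the same homological identity underlies their Ryll--Nardzewski argument; the combinatorics of the alternating sum in $\partial^n$ must be matched carefully against the cocycle relation for $b$ at the single group element $c$. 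A secondary technical point is the passage from the finitely-many-vectors statement of Proposition \ref{ConvexHull} to a uniform estimate over the compact set $b(K)$, handled by a standard $\varepsilon$-net plus equicontinuity argument on the norm-compact image. Once the telescoping identity is in hand, the rest is a routine limiting argument in the compact-open topology.
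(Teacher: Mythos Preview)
Your proposal is correct and follows essentially the same route as the paper's proof: both establish $\widetilde{\mathbb{E}}_\mu\circ(b|_{N^{n+1}})\in Z^n(N,\rho|_N)$ via the commutation of $\rho_\mu$ (hence of its Ces\`aro limits) with $\rho|_N$, then use the homotopy identity $(1-\rho_c)\circ(b|_{N^{n+1}})=\partial^n(h_c(b)|_{N^{n+1}})$ from \cite[Lemma 18]{BaderRosendalSauer} to see that $(1-\delta)\circ(b|_{N^{n+1}})\in B^n(N,\rho|_N)$ for any $\delta\in\mathrm{conv}\{\rho_c\mid c\in C\}$, and finally invoke Proposition~\ref{ConvexHull} together with an $\varepsilon$-net argument on the compact image $b(K)$ to approximate $(1-\widetilde{\mathbb{E}}_\mu)\circ(b|_{N^{n+1}})$ by such coboundaries uniformly on compacta. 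The only cosmetic difference is that the paper organizes the approximation by indexing over the net of finite subsets $T\subseteq N^{n+1}$ with $\delta_T$ chosen so that $\max_{\mathbf n\in T}\|\widetilde{\mathbb{E}}_\mu(b(\mathbf n))-\delta_T(b(\mathbf n))\|<(\#T)^{-1}$, but this is exactly your $\varepsilon$-net step phrased as a single net.
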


\begin{proof} By $\mu\in\mathcal{P}_{\sigma}(C)$ the operator $\rho_{\mu}$ commutes with all elements $\rho_{m}\in\text{GL}(V)$, $m\in N$. From Proposition \ref{EquivalentCharacterizations} it follows that for all $m,m_{1},...,m_{n+1}\in N$, 
\begin{eqnarray*}
(\widetilde{\mathbb{E}}_{\mu}\circ b)(mm_{1},...,mm_{n+1}) & = & \lim_{N\rightarrow\infty}\frac{1}{N}\sum_{i=0}^{N-1}\rho_{\mu}^{i}\left(b(mm_{1},...,mm_{n+1})\right)\\
 & = & \lim_{N\rightarrow\infty}\rho_{m}\left(\frac{1}{N}\sum_{i=0}^{N-1}\rho_{\mu}^{i}\left(b(m_{1},...,m_{n+1})\right)\right)\\
 & = & \rho_{m}\left((\widetilde{\mathbb{E}}_{\mu}\circ b)(m_{1},...,m_{n+1})\right),
\end{eqnarray*}
i.e. $\widetilde{\mathbb{E}}_{\mu}\circ(b|_{N^{n+1}})\in C(N^{n+1},V)^{N}$. It is furthermore clear that $\partial^{n+1}(\widetilde{\mathbb{E}}_{\mu}\circ(b|_{N^{n+1}}))=0$, so that $\widetilde{\mathbb{E}}_{\mu}\circ(b|_{N^{n+1}})$ indeed defines an element in $Z^{n}(N,\rho|_{N})$.\\

Since $\text{im}(1-\widetilde{\mathbb{E}}_{\mu})=\overline{\text{im}(\Delta_{\mu})}$ is a $N$-invariant closed subspace of $V$, the element $b^{\prime}:=(1-\widetilde{\mathbb{E}}_{\mu})\circ(b|_{N^{n+1}})$ is a $n$-cocycle taking values in $\overline{\text{im}(\Delta_{\mu})}$. We may assume that $N$ is infinite. Denote by $\mathcal{S}$ the set of non-empty finite subsets of $N^{n+1}$, partially ordered by inclusion. For any $T\in\mathcal{S}$, Proposition \ref{ConvexHull} implies the existence of an element $\delta_{T}\in\text{conv}\{\rho_{c}\mid c\in C\}$ with $\max_{\mathbf{n}\in T}\Vert\widetilde{\mathbb{E}}_{\mu}(b(\mathbf{n}))-\delta_{T}(b(\mathbf{n}))\Vert<(\#T)^{-1}$. For $c\in C$ define a homomorphism $h_{c}:C^{n+1}(G,V)^{G}\rightarrow C^{n}(G,V)^{G}$ via 
\[
\left(h_{c}(f)\right)(g_{1},...,g_{n}):=\sum_{i=1}^{n}(-1)^{i+1}f(g_{1},...,g_{i},g_{i}c,...,g_{n}c).
\]
By the proof of \cite[Lemma 18]{BaderRosendalSauer}, $(1-\rho_{c})\circ(b|_{N^{n+1}})=\partial^{n}\left(h_{c}(b)|_{N^{n+1}}\right)$ and hence also $(1-\delta_{T})\circ(b|_{N^{n+1}})\in B^{n}(N,\rho|_{N})$. We claim that $(1-\delta_{T})\circ(b|_{N^{n+1}})\rightarrow b^{\prime}$ in $Z^{n}(N,\rho|_{N})$. Indeed, let $K\subseteq N^{n+1}$ be a norm compact set and let $\varepsilon>0$. By the continuity of $b$ we may find finitely many elements $\mathbf{m}_{1},...,\mathbf{m}_{k}\in K$ with $\frac{1}{k}<\frac{\varepsilon}{3}$ and $\min_{1\leq i\leq k}\Vert b(\mathbf{n})-b(\mathbf{m}_{i})\Vert<\frac{\varepsilon}{3M}$ for every $\mathbf{n}\in K$, where $M:=\sup_{g\in G}\Vert\rho_{g}\Vert$. For $T:=\{\mathbf{m}_{1},...,\mathbf{m}_{k}\}\in\mathcal{S}$ and $T\subseteq T^{\prime}\in\mathcal{S}$, 
\begin{eqnarray*}
 &  & \sup_{\mathbf{n}\in K}\Vert b^{\prime}(\mathbf{n})-(1-\delta_{T^{\prime}})\circ b(\mathbf{n})\Vert=\sup_{\mathbf{n}\in K}\Vert\widetilde{\mathbb{E}}_{\mu}(b(\mathbf{n}))-\delta_{T^{\prime}}(b(\mathbf{n}))\Vert\\
 & \leq & \frac{2\varepsilon}{3}+\max_{1\leq i\leq k}\Vert\widetilde{\mathbb{E}}_{\mu}(b(\mathbf{m}_{i}))-\delta_{T^{\prime}}(b(\mathbf{m}_{i}))\Vert<\frac{2\varepsilon}{3}+\frac{1}{\#T^{\prime}}<\varepsilon.
\end{eqnarray*}
This implies that 
\[
b^{\prime}=\lim_{T\in\mathcal{S}}(1-\delta_{T})\circ b\in\overline{B^{n}(N,\rho|_{N})},
\]
i.e. $\widetilde{\mathbb{E}}_{\mu}\circ(b|_{N^{n+1}})$ is almost cohomologous to $b|_{N^{n+1}}$ in $Z^{n}(N,\rho|_{N})$. \end{proof}

\begin{remark} By Theorem \ref{WeaklyAlmostPeriodicSetting} the statement in Theorem \ref{MainTheorem} in particular applies to wap representations and arbitrary measures. If $V$ is separable, Theorem \ref{FixedPointApproximation} furthermore implies in combination with \cite[Lemma 17]{BaderRosendalSauer} that $\mu$ can be chosen in a way such that $b|_{N^{n+1}}$ is almost cohomologous to a cocycle taking values in $V^{C}$. \end{remark}

\begin{corollary} \label{ComplementedCorollary} Let $G$ be a topological group containing subgroups $N,C\leq G$ with $N\subseteq C_{G}(N)$. Let furthermore $\mu\in\mathcal{P}_{\sigma}(C)$ be a Baire probability measure and $(V,\rho)$ a uniformly bounded, strongly operator continuous, weakly uniquely $\mu$-stationary Banach $G$-module. Then $\overline{B^{1}(N,V_{0})}$ with \emph{$V_{0}:=\overline{\text{im}(\Delta_{\mu})}$} topologically embeds as a complemented subspace into the image of the restriction map $Z^{1}(G,\rho)\rightarrow Z^{1}(N,\rho|_{N})$.

In particular, if $V^{\mu}\subseteq V^{N}$, then $\overline{B^{1}(N,\rho|_{N})}$ is complemented in the image of the restriction map $Z^{1}(G,\rho)\rightarrow Z^{1}(N,\rho|_{N})$. \end{corollary}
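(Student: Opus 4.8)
The plan is to transport the bounded projection $\widetilde{\mathbb{E}}_{\mu}$ to the level of $1$-cocycles and then invoke Theorem \ref{MainTheorem} to pin down the image of the restriction map.

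First I would observe, exactly as at the beginning of the proof of Theorem \ref{MainTheorem}, that since $\mu$ is supported on a subgroup commuting with $N$ the operator $\rho_{\mu}$ commutes with every $\rho_{m}$, $m\in N$, and hence so does $\widetilde{\mathbb{E}}_{\mu}$, being a point-weak limit of convex combinations of powers of $\rho_{\mu}$ (Proposition \ref{Existence}). By Proposition \ref{EquivalentCharacterizations} this gives a topological decomposition $V=V_{0}\oplus V^{\mu}$ into two closed, $\rho|_{N}$-invariant submodules, with $V_{0}=\overline{\text{im}(\Delta_{\mu})}=\ker\widetilde{\mathbb{E}}_{\mu}$ and $V^{\mu}=\text{im}\,\widetilde{\mathbb{E}}_{\mu}$. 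Post-composition with $1-\widetilde{\mathbb{E}}_{\mu}$ and with $\widetilde{\mathbb{E}}_{\mu}$ are bounded $N$-equivariant operators on $V$, hence induce continuous, mutually complementary projections $P_{0},P_{1}$ on $C^{1}(N,V)^{N}$ that preserve cocycles and coboundaries. This yields a topological direct sum $Z^{1}(N,\rho|_{N})=Z^{1}(N,\rho|_{V_{0}})\oplus Z^{1}(N,\rho|_{V^{\mu}})$ and, by continuity of $P_{0},P_{1}$, also $\overline{B^{1}(N,\rho|_{N})}=\overline{B^{1}(N,\rho|_{V_{0}})}\oplus\overline{B^{1}(N,\rho|_{V^{\mu}})}$.

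Next I would examine the restriction map $\text{res}\colon Z^{1}(G,\rho)\to Z^{1}(N,\rho|_{N})$ and set $W:=\text{im}(\text{res})$. Theorem \ref{MainTheorem} with $n=1$ says precisely that for $b\in Z^{1}(G,\rho)$ the cochain $P_{0}(\text{res}(b))=(1-\widetilde{\mathbb{E}}_{\mu})\circ(b|_{N^{2}})=b|_{N^{2}}-\widetilde{\mathbb{E}}_{\mu}\circ(b|_{N^{2}})$ lies in $\overline{B^{1}(N,\rho|_{N})}$; being $V_{0}$-valued it in fact lies in $\overline{B^{1}(N,\rho|_{V_{0}})}$, so $P_{0}(W)\subseteq\overline{B^{1}(N,\rho|_{V_{0}})}$. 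Conversely every coboundary $m\mapsto v_{0}-\rho_{m}v_{0}$ with $v_{0}\in V_{0}$ is the restriction of the $G$-coboundary $g\mapsto v_{0}-\rho_{g}v_{0}$ and is fixed by $P_{0}$, whence $B^{1}(N,\rho|_{V_{0}})\subseteq P_{0}(W)$ and $P_{0}(W)$ is dense in $\overline{B^{1}(N,\rho|_{V_{0}})}$. Granting that $P_{0}(W)$ is in fact all of $\overline{B^{1}(N,\rho|_{V_{0}})}$ (equivalently $\overline{B^{1}(N,\rho|_{V_{0}})}\subseteq W$), the restriction of $P_{1}$ to $W$ is a continuous projection of $W$ with kernel $\overline{B^{1}(N,\rho|_{V_{0}})}=W\cap Z^{1}(N,\rho|_{V_{0}})$ and image $W\cap Z^{1}(N,\rho|_{V^{\mu}})$; this exhibits $\overline{B^{1}(N,\rho|_{V_{0}})}$ as a topologically embedded complemented subspace of $W$. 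For the final assertion, if $V^{\mu}\subseteq V^{N}$ then $N$ acts trivially on $V^{\mu}$, so $B^{1}(N,\rho|_{V^{\mu}})=0$ and $\overline{B^{1}(N,\rho|_{N})}=\overline{B^{1}(N,\rho|_{V_{0}})}$, which gives the displayed special case.

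The main obstacle is exactly the step just flagged: upgrading ``$P_{0}(W)$ is dense in $\overline{B^{1}(N,\rho|_{V_{0}})}$'' to an equality, i.e. showing that every reduced $V_{0}$-valued coboundary on $N$ is the restriction of some $G$-cocycle. Since images of continuous linear maps between the relevant cochain spaces need not be closed, this cannot be read off formally; I expect one must use that the cochain $\widetilde{\mathbb{E}}_{\mu}\circ(b|_{N^{2}})$ produced by Theorem \ref{MainTheorem} is genuinely $N$-equivariant and continuous, together with the fact that the approximation in Proposition \ref{ConvexHull} proceeds through convex combinations of the operators $\rho_{c}$, $c\in C$, all of which extend to $G$, so that the limiting coboundary can be traced back through a $G$-defined cochain. (One should also check whether the intended topology on $W$ is the subspace topology from $Z^{1}(N,\rho|_{N})$ or the quotient topology from $Z^{1}(G,\rho)$; this is immaterial for the decomposition but relevant for reading off the embedding.)
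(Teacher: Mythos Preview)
Your approach coincides with the paper's almost step for step: both push the idempotent $1-\widetilde{\mathbb{E}}_{\mu}$ to the cocycle level, invoke Theorem~\ref{MainTheorem} to force the $V_{0}$-component of any restricted cocycle into $\overline{B^{1}(N,V_{0})}$, and read off complementation from the resulting retraction. The paper phrases the embedding via the extension map $f\mapsto\overline f$ on $C^{1}$ and then asserts that ``$P$ restricts to the identity on $\overline{B^{1}(N,V_{0})}$'', but verifies this only on the unreduced space $B^{1}(N,V_{0})$, tacitly assuming $\overline{B^{1}(N,V_{0})}\subseteq W$ so that the continuity argument applies.

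So the obstacle you isolate---upgrading $B^{1}(N,V_{0})\subseteq W$ to $\overline{B^{1}(N,V_{0})}\subseteq W$---is not a defect of your sketch relative to the paper; the paper passes over exactly this point without argument. Your diagnosis that it cannot be read off formally is correct: a limit of $N$-coboundaries $\partial^{1}f_{i}$ lifts to the $G$-coboundaries $\partial^{1}\overline{f_{i}}$, but convergence uniformly on compacta of $N$ does not entail convergence uniformly on compacta of $G$, so one cannot simply pass to a limit in $Z^{1}(G,\rho)$. (When $N=G$, as in Corollary~\ref{AbelianComplementedCorollary}, the issue disappears because then $W=Z^{1}(N)$.) One small quibble: your ``equivalently $\overline{B^{1}(N,V_{0})}\subseteq W$'' is actually the stronger of the two conditions, since $P_{0}(W)$ is not a~priori contained in $W$; it is however precisely what is needed for $P_{0}|_{W}$ to be a projection of $W$ onto $\overline{B^{1}(N,V_{0})}$, so your instinct about which statement to aim for is right.
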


\begin{proof} Denote the restriction map $Z^{1}(G,\rho)\rightarrow Z^{1}(N,\rho|_{N})$ by $\text{Res}$ and note that $V_{0}$ is a $N$-invariant closed subspace of $V$. The linear map $C^{1}(N,V)^{N}\rightarrow C^{1}(G,V)^{G}$ given by $f\mapsto\overline{f}$ with $\overline{f}(g):=\rho_{g}\circ f(e)$ for $g\in G$ is well-defined, continuous and bijective. It induces a topological embedding of $\overline{B^{1}(N,\rho|_{N})}$ into $\text{Res}(Z^{1}(G,\rho))$ via $\partial^{1}f\mapsto\text{Res}(\partial^{1}\overline{f})$ for $f\in C^{1}(N,V)^{N}$. Similarly, $\overline{B^{1}(N,V_{0})}$ topologically embeds as a subspace of $\text{Res}(Z^{1}(G,\rho))$ via $\partial^{1}f\mapsto\text{Res}(\partial^{1}\overline{f})$ for $f\in C^{1}(N,V_{0})^{N}\subseteq C^{1}(N,V)^{N}$.

By Theorem \ref{MainTheorem} and Lemma \ref{Inclusion}, $(1-\widetilde{\mathbb{E}}_{\mu})\circ\text{Res}(b)\in\overline{B^{1}(N,V_{0})}$ for every $b\in Z^{1}(G,\rho)$. We thus obtain a map $P:\text{Res}(Z^{1}(G,\rho))\rightarrow\overline{B^{1}(N,V_{0})}$ which is well-defined and continuous. For every $f\in C^{1}(N,V_{0})^{N}$ and $m,n\in N$ we have that 
\[
P(\text{Res}(\partial^{1}\overline{f}))(m,n)=(1-\widetilde{\mathbb{E}}_{\mu})\left(f(n)-f(m)\right)=f(n)-f(m)=\text{Res}(\partial^{1}\overline{f})(m,n)
\]
so that $P$ restricts to the identity on $\overline{B^{1}(N,V_{0})}$. This implies that $\overline{B^{1}(N,V_{0})}$ is complemented in $\text{Res}(Z^{1}(G,\rho))$.\\

In the case where $V^{\mu}\subseteq V^{N}$, the space $\overline{B^{1}(N,V_{0})}$ identifies with $\overline{B^{1}(N,\rho|_{N})}$ in $\text{Res}(Z^{1}(G,\rho))$. Indeed, it is clear that $\overline{B^{1}(N,V_{0})}\subseteq\overline{B^{1}(N,\rho|_{N})}$. For the converse inclusion let $f\in C^{1}(N,V)^{N}$ be a function and note that by Proposition \ref{EquivalentCharacterizations} and the same argument as in the proof of Theorem \ref{MainTheorem}, $(1-\widetilde{\mathbb{E}}_{\mu})\circ f\in C^{1}(N,V_{0})^{N}$. For all $m,n\in N$ we have that $\widetilde{\mathbb{E}}_{\mu}\left(f(n)-f(m)\right)=0$ so that $\text{Res}(\partial^{1}\overline{f})=\text{Res}(\partial^{1}\overline{(1-\widetilde{\mathbb{E}}_{\mu})\circ f})$. From this, we obtain the converse inclusion and therefore, by the previous paragraph, that $\overline{B^{1}(N,\rho|_{N})}$ is complemented in $\text{Res}(Z^{1}(G,\rho))$. \end{proof}

If the subgroup is chosen to be the center of the group, the statement of Corollary \ref{ComplementedCorollary} extends to higher degrees.

\begin{corollary} \label{AbelianComplementedCorollary} Let $G$ be a topological group, $\mu\in\mathcal{P}_{\sigma}(\mathcal{Z}(G))$ a Baire probability measure, and $(V,\rho)$ a uniformly bounded, strongly operator continuous, weakly uniquely $\mu$-stationary Banach $G$-module. Then 
\begin{equation}
Z^{n}(G,\rho)=\overline{B^{n}(G,V_{0})}\oplus Z^{n}(G,V^{\mu}),\label{eq:Decomposition}
\end{equation}
where \emph{$V_{0}:=\overline{\text{im}(\Delta_{\mu})}$}. In particular, $\overline{H}_{c}^{n}(G,\rho)\cong\overline{H}_{c}^{n}(G,V^{\mu})$. \end{corollary}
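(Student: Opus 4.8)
The plan is to establish the direct sum decomposition \eqref{eq:Decomposition} by exhibiting a continuous projection $Z^{n}(G,\rho)\to \overline{B^{n}(G,V_{0})}$ whose kernel is exactly $Z^{n}(G,V^{\mu})$. The central tool is Corollary~\ref{CenterFixedPoint} (via Proposition~\ref{LiouvilleFixedPoint}), which, because $\mu$ is supported on $\mathcal{Z}(G)$, tells us that $\widetilde{\mathbb{E}}_{\mu}$ is $Z$-invariant for $Z=\mathcal{Z}(G)_{\mu}$; but more importantly we should observe that when $\mu\in\mathcal{P}_{\sigma}(\mathcal{Z}(G))$, the Markov operator $\rho_{\mu}$ commutes with \emph{every} $\rho_{g}$, $g\in G$, not merely with elements of a centralizer. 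Hence $\widetilde{\mathbb{E}}_{\mu}\colon V\to V^{\mu}$ is a $G$-equivariant bounded projection, $V_{0}=\overline{\operatorname{im}(\Delta_{\mu})}$ is a $G$-invariant closed subspace, and $V^{\mu}$ is a $G$-submodule with $V=V_{0}\oplus V^{\mu}$ as Banach $G$-modules by Proposition~\ref{EquivalentCharacterizations}.

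First I would use this $G$-module splitting to split the cochain complex: post-composition with $\widetilde{\mathbb{E}}_{\mu}$ and with $1-\widetilde{\mathbb{E}}_{\mu}$ gives continuous $G$-equivariant maps $C^{n+1}(G,V)^{G}\to C^{n+1}(G,V^{\mu})^{G}$ and $C^{n+1}(G,V)^{G}\to C^{n+1}(G,V_{0})^{G}$ that commute with the differentials (equivariance is exactly what makes the post-composed maps land in the $G$-invariant cochains, and commuting with $\partial^{n}$ is automatic since $\partial^{n}$ only permutes and omits arguments). This yields a topological direct sum decomposition of complexes $C^{\bullet}(G,V)^{G}=C^{\bullet}(G,V_{0})^{G}\oplus C^{\bullet}(G,V^{\mu})^{G}$, hence in particular $Z^{n}(G,\rho)=Z^{n}(G,V_{0})\oplus Z^{n}(G,V^{\mu})$ and $\overline{B^{n}(G,\rho)}=\overline{B^{n}(G,V_{0})}\oplus\overline{B^{n}(G,V^{\mu})}$ as topological direct sums.

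To finish I must identify $Z^{n}(G,V_{0})$ with $\overline{B^{n}(G,V_{0})}$, i.e.\ show every cocycle valued in $V_{0}$ is a limit of coboundaries valued in $V_{0}$. This is precisely the content of Theorem~\ref{MainTheorem} applied with $N=G$ and $C=\mathcal{Z}(G)$ (legitimate since $\mathcal{Z}(G)\subseteq C_{G}(G)$): for $b\in Z^{n}(G,\rho)$, the cocycle $b'=(1-\widetilde{\mathbb{E}}_{\mu})\circ b$ takes values in $V_{0}$ and is almost cohomologous to $b|_{G^{n+1}}=b$, so $b'\in\overline{B^{n}(G,V_{0})}$; applying this to an arbitrary $b\in Z^{n}(G,V_{0})$ (for which $\widetilde{\mathbb{E}}_{\mu}\circ b=0$, hence $b=b'$) gives $Z^{n}(G,V_{0})=\overline{B^{n}(G,V_{0})}$. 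Combining with the previous paragraph yields $Z^{n}(G,\rho)=\overline{B^{n}(G,V_{0})}\oplus Z^{n}(G,V^{\mu})$, which is \eqref{eq:Decomposition}. For the reduced cohomology statement, from the splitting of complexes we get $\overline{H}^{n}_{c}(G,\rho)=\overline{H}^{n}_{c}(G,V_{0})\oplus\overline{H}^{n}_{c}(G,V^{\mu})$, and the first summand is $Z^{n}(G,V_{0})/\overline{B^{n}(G,V_{0})}=0$ by what we just proved, so $\overline{H}^{n}_{c}(G,\rho)\cong\overline{H}^{n}_{c}(G,V^{\mu})$.

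The main obstacle is the topological bookkeeping: one must verify that post-composition with the bounded operators $\widetilde{\mathbb{E}}_{\mu}$ and $1-\widetilde{\mathbb{E}}_{\mu}$ is continuous for the compact-open topology (clear, since these are bounded linear operators and uniform convergence on compacta is preserved), that the resulting decomposition of $C^{n+1}(G,V)^{G}$ is genuinely \emph{topological} (i.e.\ the projections are continuous, which again follows from boundedness of $\widetilde{\mathbb{E}}_{\mu}$), and that closures behave well under this splitting so that $\overline{B^{n}(G,V)}$ decomposes accordingly — this last point uses that a topological direct sum of Banach (or Fréchet-type) spaces restricts to a topological direct sum on closed subspaces via the continuous projections. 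None of these steps is deep, but they are where care is needed; the genuinely substantive input, that cocycles into $V_{0}$ are approximable by coboundaries, is already packaged in Theorem~\ref{MainTheorem}.
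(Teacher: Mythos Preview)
Your proposal is correct and follows essentially the same approach as the paper: define the projection $P$ on $Z^{n}(G,\rho)$ by post-composition with $\widetilde{\mathbb{E}}_{\mu}$, identify $\operatorname{im}(P)=Z^{n}(G,V^{\mu})$ and $\ker(P)=Z^{n}(G,V_{0})$, and then invoke Theorem~\ref{MainTheorem} (with $N=G$, $C=\mathcal{Z}(G)$) to see that every $V_{0}$-valued cocycle lies in $\overline{B^{n}(G,V_{0})}$. The paper's proof is terser, citing Lemma~\ref{Inclusion} to identify $\ker(P)$ and leaving implicit the step you spell out---namely that $\overline{B^{n}(G,V)}\cap Z^{n}(G,V_{0})=\overline{B^{n}(G,V_{0})}$, which follows by applying the continuous $G$-equivariant projection $1-\widetilde{\mathbb{E}}_{\mu}$ to an approximating net of coboundaries. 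Your initial reference to Corollary~\ref{CenterFixedPoint} is unnecessary (and you rightly pivot away from it): all that is needed is the direct observation that $\rho_{\mu}$, hence $\widetilde{\mathbb{E}}_{\mu}$, commutes with every $\rho_{g}$ because $\mu$ is supported on the center.
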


\begin{proof} Theorem \ref{MainTheorem} implies that $b\mapsto\widetilde{\mathbb{E}}_{\mu}\circ b$ induces a well-defined linear map $P:Z^{n}(G,\rho)\rightarrow Z^{n}(G,V^{\mu})$ with $(1-P)(b)\in\overline{B^{n}(G,V)}$ for every $b\in Z^{n}(G,\rho)$. The map is obviously a continuous linear projection with $\text{im}(P)=Z^{n}(G,V^{\mu})$. By Lemma \ref{Inclusion} we furthermore have that $\ker(P)=\text{im}(1-P)=\overline{B^{n}(G,V_{0})}$, from which the decomposition (\ref{eq:Decomposition}) follows. \end{proof}

An analog to Corollary \ref{AbelianComplementedCorollary} for strongly operator continuous wap representations occurs in \cite[Corollary 5]{BaderRosendalSauer}. The statement notably generalizes a number of previous results, see \cite{Puls03}, \cite{MartinValette07}, \cite{Kappos05}, \cite{Nowak13}.

\vspace{3mm}


\subsection{Cohomology of nilpotent groups}

As before, given a Baire probability measure $\mu$ on a topological group $G$, let $G_{\mu}$ be the closed subgroup of $G$ generated by the set $\Sigma_{\mu}(G)$ of elements $g\in G$ for which $\int_{G}fd\mu>0$ for every positive function $f\in C_{b}(G,\mathbb{R})$ with $f(g)\neq0$. By combining the statement in Proposition \ref{CenterFixedPoint} with Corollary \ref{AbelianComplementedCorollary}, one can formulate an analog of \cite[Corollary 3.7]{Shalom00} and \cite[Theorem 7]{BaderRosendalSauer} for weakly uniquely stationary Banach modules. The proof is essentially the same as in \cite[Theorem 7]{BaderRosendalSauer}.

\begin{proposition} \label{CenterProposition} Let $G$ be a topological group, $\mu\in\mathcal{P}_{\sigma}(\mathcal{Z}(G))$ a Baire probability measure, and $(V,\rho)$ a uniformly bounded, strongly operator continuous, weakly uniquely $\mu$-stationary Banach $G$-module with $V^{G}=\{0\}$. Set $Z:=\mathcal{Z}(G)_{\mu}$ and assume that the product map $Z\times Z\rightarrow Z$ is measurable with respect to the product $\sigma$-algebra $\mathcal{B}a(Z)\otimes\mathcal{B}a(Z)$ and $\mathcal{B}a(Z)$. Then, $Z^{1}(G,V)\cong\overline{B^{1}(G,V_{0})}\oplus Z^{1}(G/Z,V^{Z})$, where \emph{$V_{0}:=\overline{\text{im}(\Delta_{\mu})}$}. In particular, $\overline{H}_{c}^{1}(G,\rho)\cong\overline{H}_{c}^{1}(G/Z,V^{Z})$. \end{proposition}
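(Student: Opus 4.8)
The plan is to reduce the statement to Corollary \ref{AbelianComplementedCorollary} and Corollary \ref{CenterFixedPoint}, and then to run the classical inflation argument going back to \cite{Shalom00}, with the hypothesis $V^{G}=\{0\}$ entering only at the last step. Applying Corollary \ref{AbelianComplementedCorollary} with $n=1$ already yields the topological direct sum $Z^{1}(G,\rho)=\overline{B^{1}(G,V_{0})}\oplus Z^{1}(G,V^{\mu})$ together with $\overline{H}_{c}^{1}(G,\rho)\cong\overline{H}_{c}^{1}(G,V^{\mu})$. It therefore suffices to identify $Z^{1}(G,V^{\mu})$ with $Z^{1}(G/Z,V^{Z})$ as topological vector spaces, and correspondingly for the reduced cohomology.

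First I would check that $V^{\mu}=V^{Z}$. Since $\widetilde{\mathbb{E}}_{\mu}$ is idempotent with range exactly $V^{\mu}$ (cf.\ the discussion preceding Definition \ref{WeaklyUniquelyStationary} together with (\ref{eq:Compatibility})) and Corollary \ref{CenterFixedPoint} gives $\text{im}(\mathbb{E}_{\mu}\circ\iota_{V})\subseteq(V^{\ast\ast})^{Z}$, we obtain $V^{\mu}=\text{im}(\widetilde{\mathbb{E}}_{\mu})\subseteq V^{Z}$. For the reverse inclusion one uses that $\mu$ is carried by the closed subgroup $Z=\mathcal{Z}(G)_{\mu}$ (its closed support $\Sigma_{\mu}$ lies in $Z$): for $v\in V^{Z}$ one has $\rho_{g}(v)=v$ for $\mu$-almost every $g$, whence $\widehat{\rho_{\mu}(v)}(\phi)=\int_{G}\phi(\rho_{g}v)\,d\mu(g)=\phi(v)$ for all $\phi\in V^{\ast}$, i.e.\ $\rho_{\mu}(v)=v$. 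Thus $V^{\mu}=V^{Z}$; in particular $V^{Z}$ is a $G$-submodule of $V$ on which $Z$ acts trivially, so the $G$-action on $V^{Z}$ descends to the quotient $G/Z$.

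It remains to see that inflation along the quotient map $q\colon G\to G/Z$ is an isomorphism $Z^{1}(G/Z,V^{Z})\cong Z^{1}(G,V^{Z})=Z^{1}(G,V^{\mu})$ of topological vector spaces. Inflation $\widetilde{b}\mapsto\widetilde{b}\circ q$ is clearly a continuous linear injection; surjectivity is where $V^{G}=\{0\}$ is used, via the standard computation: for $b\in Z^{1}(G,V^{Z})$, $z\in Z$ and $g\in G$, the cocycle identity together with $zg=gz$ and $b(g)\in V^{Z}$ gives $b(g)+b(z)=\rho_{z}(b(g))+b(z)=b(zg)=b(gz)=\rho_{g}(b(z))+b(g)$, hence $b(z)=\rho_{g}(b(z))$ for every $g\in G$, i.e.\ $b(z)\in V^{G}=\{0\}$. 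Thus $b|_{Z}\equiv 0$, so $b(gz)=\rho_{g}(b(z))+b(g)=b(g)$ and $b$ factors through $G/Z$ as a continuous $V^{Z}$-valued cocycle (continuity by the quotient property of $q$), which is a preimage of $b$. That this linear bijection is in fact a homeomorphism is checked as in \cite{Shalom00} and \cite{BaderRosendalSauer} (using that $q$ is open and sends compacta to compacta).

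Combining everything, $Z^{1}(G,\rho)=\overline{B^{1}(G,V_{0})}\oplus Z^{1}(G,V^{\mu})\cong\overline{B^{1}(G,V_{0})}\oplus Z^{1}(G/Z,V^{Z})$. For the reduced statement, the inflation isomorphism carries $B^{1}(G/Z,V^{Z})$ onto $B^{1}(G,V^{Z})$ --- the coboundaries $g\mapsto w-\rho_{g}(w)$ with $w\in V^{Z}$ automatically vanish on $Z$ --- hence identifies closures, so $\overline{H}_{c}^{1}(G,\rho)\cong\overline{H}_{c}^{1}(G,V^{\mu})=\overline{H}_{c}^{1}(G,V^{Z})\cong\overline{H}_{c}^{1}(G/Z,V^{Z})$. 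The only genuinely new input beyond invoking the two corollaries is the Shalom-type cocycle computation forcing $b|_{Z}=0$; I expect the main technical nuisances to be the identification $V^{\mu}=V^{Z}$, which hinges on $\mu$ being carried by $Z$, and the verification that inflation is a topological (not merely linear) isomorphism in the absence of a local-compactness hypothesis on $G$.
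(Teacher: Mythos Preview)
Your proof is correct and follows essentially the same approach as the paper: use Corollary \ref{CenterFixedPoint} to identify $V^{\mu}=V^{Z}$, apply Corollary \ref{AbelianComplementedCorollary} with $n=1$, and then run the cocycle computation (using $V^{G}=\{0\}$) to show that every $b\in Z^{1}(G,V^{Z})$ vanishes on $Z$ and hence factors through $G/Z$. The paper is terser on the inclusion $V^{Z}\subseteq V^{\mu}$ and on the topological verification of the inflation isomorphism, but the overall structure is identical.
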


\begin{proof} Since $(V,\rho)$ is weakly uniquely $\mu$-stationary, we have by Corollary \ref{CenterFixedPoint} that $V^{\mu}=V^{Z}$. By Corollary \ref{AbelianComplementedCorollary} it thereby suffices to show that $Z^{1}(G/Z,V^{Z})\cong Z^{1}(G,V^{Z})$. One easily checks that $Z^{1}(G/Z,V^{Z})$ canonically embeds into $Z^{1}(G,V^{Z})$. For every $b\in Z^{1}(G,V^{Z})$ and $z\in Z$, $g,h\in G$ we furthermore have 
\[
b(g,zh)=b(g,e)+b(e,zh)=b(g,e)+b(e,h)-b(z,e)=b(g,e)+b(e,h)+b(e,z).
\]
For $g=h$ it in particular follows that $b(e,z)\in V^{G}=\{0\}$ and hence $b(g,zh)=b(zg,h)=b(g,h)$. This means that $b$ is constant on cosets of $Z$, so that $Z^{1}(G/Z,V^{Z})\cong Z^{1}(G,V^{Z})$. \end{proof}

Under similar conditions, we deduce the following implication.

\begin{proposition} \label{LimitBehaviour} Let $G$ be a topological group, $\mu\in\mathcal{P}_{\sigma}(\mathcal{Z}(G))$ a Baire probability measure, and $(V,\rho)$ a uniformly bounded, strongly operator continuous, weakly uniquely $\mu$-stationary Banach $G$-module. Set $Z:=\mathcal{Z}(G)_{\mu}$ and assume that the product map $Z\times Z\rightarrow Z$ is measurable with respect to the product $\sigma$-algebra $\mathcal{B}a(Z)\otimes\mathcal{B}a(Z)$ and $\mathcal{B}a(Z)$. Then, for any 1-cocycle $b\in Z^{1}(G,\rho)$ with $\lim_{n}n^{-1}\Vert b(e,z^{n})\Vert=0$ for all $z\in Z$, the element $\widetilde{\mathbb{E}}_{\mu}\circ b\in Z^{1}(G,V^{Z})$ factors through $G/Z$. \end{proposition}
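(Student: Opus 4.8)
The plan is to show that the $1$-cocycle $c:=\widetilde{\mathbb{E}}_{\mu}\circ b$ vanishes identically on $Z=\mathcal{Z}(G)_{\mu}$, and then to read off from the cocycle identity that $c$ is constant on $Z$-cosets, hence descends to $G/Z$. First I would check that $c$ is a legitimate object: applying Theorem \ref{MainTheorem} with $N=G$ and $C=\mathcal{Z}(G)$ — exactly as in the proof of Corollary \ref{AbelianComplementedCorollary} — the assignment $b\mapsto\widetilde{\mathbb{E}}_{\mu}\circ b$ gives a continuous linear map $Z^{1}(G,\rho)\to Z^{1}(G,V^{\mu})$, and by Corollary \ref{CenterFixedPoint} (which is where the measurability hypothesis on $Z\times Z\to Z$ enters) one has $V^{\mu}=V^{Z}$, so $c\in Z^{1}(G,V^{Z})$. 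In the function picture, $c(g):=\widetilde{\mathbb{E}}_{\mu}(b(e,g))$ then satisfies $c(gh)=\rho_{g}(c(h))+c(g)$ for all $g,h\in G$ and takes values in $V^{Z}$, on which $Z$ acts trivially.

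Now fix $z\in Z$. Since $c(z)\in V^{Z}$ and every power of $z$ lies in $Z$, the cocycle identity telescopes to $c(z^{n})=\sum_{k=0}^{n-1}\rho_{z}^{k}(c(z))=n\,c(z)$ for all $n\in\mathbb{N}$, whence $\|c(z)\|=n^{-1}\|c(z^{n})\|=n^{-1}\|\widetilde{\mathbb{E}}_{\mu}(b(e,z^{n}))\|\le\|\widetilde{\mathbb{E}}_{\mu}\|\,n^{-1}\|b(e,z^{n})\|\to 0$ by hypothesis; hence $c(z)=0$. As this holds for every $z\in Z$, the cocycle identity yields $c(gz)=\rho_{g}(c(z))+c(g)=c(g)$ for all $g\in G$, $z\in Z$, and, $Z$ being a closed normal subgroup, the analogous identity on the left. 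Thus $c$ is constant on cosets of $Z$ and descends to a continuous $1$-cocycle $\bar c\colon G/Z\to V^{Z}$ for the quotient module (well-defined because $Z$ acts trivially on $V^{Z}$); that is, $\widetilde{\mathbb{E}}_{\mu}\circ b$ factors through $G/Z$.

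I do not expect a real obstacle here. The substantive inputs — well-definedness of $\widetilde{\mathbb{E}}_{\mu}$ on $V$, the fact that $\widetilde{\mathbb{E}}_{\mu}\circ b$ is a $G$-cocycle valued in $V^{\mu}$, and the identification $V^{\mu}=V^{Z}$ — are all already supplied by Theorem \ref{MainTheorem}, Corollary \ref{AbelianComplementedCorollary} and Corollary \ref{CenterFixedPoint}. What remains is purely formal: checking that $Z=\mathcal{Z}(G)_{\mu}$ is closed and normal so that $G/Z$ and the $G/Z$-module $V^{Z}$ make sense, and that continuity of $\bar c$ follows from continuity of $c$ via the quotient map $G\to G/Z$.
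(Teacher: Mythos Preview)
Your proof is correct and follows essentially the same approach as the paper: use Theorem~\ref{MainTheorem} and Corollary~\ref{CenterFixedPoint} to get $c=\widetilde{\mathbb{E}}_{\mu}\circ b\in Z^{1}(G,V^{Z})$, telescope the cocycle identity on $V^{Z}$ to obtain $c(z^{n})=n\,c(z)$, bound by $\Vert\widetilde{\mathbb{E}}_{\mu}\Vert\,n^{-1}\Vert b(e,z^{n})\Vert\to 0$, and then conclude that $c$ is constant on $Z$-cosets as in Proposition~\ref{CenterProposition}. Your write-up is in fact a bit cleaner than the paper's, which telescopes the identity with the symbol $b$ where it really means $b'=\widetilde{\mathbb{E}}_{\mu}\circ b$.
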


\begin{proof} Since $(V,\rho)$ is weakly uniquely $\mu$-stationary we have by Corollary \ref{CenterFixedPoint} that $V^{\mu}=V^{Z}$. Theorem \ref{MainTheorem} therefore implies that $b^{\prime}:=\widetilde{\mathbb{E}}_{\mu}\circ b\in Z^{1}(G,V^{Z})$. By the same calculation as in the proof of Proposition \ref{CenterProposition}, $b(e,z^{n})=nb(e,z)$ for every $z\in Z$, $n\in\mathbb{N}$, so that by our assumption $\Vert b(e,z)\Vert=\lim_{n}n^{-1}\Vert b(e,z^{n})\Vert=0$. The claim then follows in the same way as in the proof of Proposition \ref{CenterProposition}. \end{proof}

\begin{theorem} \label{CommutatorDecomposition} Let $G$ be a finitely generated nilpotent topological group and $(V,\rho)$ a strongly operator continuous wap Banach $G$-module. Denote the Abelianization of $G$ by $G^{\text{ab}}:=G/[G,G]$. Then 
\begin{equation}
Z^{1}(G,\rho)\cong Z^{1}(G^{\text{ab}},V^{[G,G]})\oplus W\label{eq:NilpotentDecomposition}
\end{equation}
for some closed subspace $W\subseteq\overline{B^{1}(G,\rho)}$. The identification induces a topological isomorphism $\overline{H}_{c}^{1}(G,\rho)\cong\overline{H}_{c}^{1}(G^{\text{ab}},V^{[G,G]})$ and in the case where $V^{[G,G]}=V^{G}$ we have $W=\overline{B^{1}(G,\rho)}$. \end{theorem}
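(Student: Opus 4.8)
The plan is to induct on the nilpotency class of $G$ by descending through the upper central series, using Proposition~\ref{CenterProposition} at each stage. The key point is that when $G$ is nilpotent, its center $\mathcal{Z}(G)$ is non-trivial (unless $G$ is trivial), and $G/\mathcal{Z}(G)$ is again finitely generated nilpotent of strictly smaller class. First I would observe that a strongly operator continuous wap Banach $G$-module is, by Theorem~\ref{WeaklyAlmostPeriodicSetting}, BP-integrable and weakly uniquely $\mu$-stationary for \emph{every} Baire probability measure $\mu$; in particular we are free to choose $\mu$ supported on $\mathcal{Z}(G)$. Since $G$ is finitely generated, $\mathcal{Z}(G)$ is a finitely generated abelian group, and one can pick $\mu\in\mathcal{P}_\sigma(\mathcal{Z}(G))$ whose support generates $\mathcal{Z}(G)$ (e.g.\ a finitely supported measure on a generating set together with the identity), so that $Z:=\mathcal{Z}(G)_\mu=\mathcal{Z}(G)$; the measurability hypothesis on the product map $Z\times Z\to Z$ is automatic here since $Z$ is a countable discrete group (or at worst a finitely generated abelian Lie group, which is metrizable).

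The next step is to verify that for finitely generated nilpotent $G$, every $1$-cocycle $b\in Z^1(G,\rho)$ satisfies $\lim_n n^{-1}\Vert b(e,z^n)\Vert=0$ for all $z\in\mathcal{Z}(G)$ — indeed, as in the proof of Proposition~\ref{CenterProposition} one has $b(e,z^n)=nb(e,z)$ only after passing through $\widetilde{\mathbb{E}}_\mu$; more directly, for a \emph{wap} module the orbit $G.v$ is relatively weakly compact, hence norm bounded, and the sublinearity of the growth of $b$ along the cyclic subgroup $\langle z\rangle$ follows from the fact that $\overline{H}^1$ of $\mathbb{Z}$ with wap coefficients is governed by a bounded averaging argument — alternatively, this is exactly the content that Proposition~\ref{LimitBehaviour} is designed to exploit. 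With this in hand, Proposition~\ref{CenterProposition} (applied first to $V/V^G$ or directly, distinguishing whether $V^G=\{0\}$) gives a decomposition
\[
Z^1(G,\rho)\cong\overline{B^1(G,V_0)}\oplus Z^1(G/Z,V^Z),\qquad V_0:=\overline{\mathrm{im}(\Delta_\mu)},
\]
together with $\overline{H}^1_c(G,\rho)\cong\overline{H}^1_c(G/Z,V^Z)$. Now $G/Z$ is finitely generated nilpotent of smaller class, and $(V^Z,\rho)$ is still a strongly operator continuous wap Banach $G/Z$-module (a closed $G$-invariant, hence $G/Z$-invariant, subspace of a wap module is wap), so the induction hypothesis applies to it. Iterating until the quotient becomes abelian, and noting that the terminal abelian quotient of the tower of successive central quotients is precisely $G^{\mathrm{ab}}=G/[G,G]$ while the fixed-point space stabilizes to $V^{[G,G]}$, one assembles the claimed isomorphism $Z^1(G,\rho)\cong Z^1(G^{\mathrm{ab}},V^{[G,G]})\oplus W$ with $W$ the direct sum of the successive coboundary pieces $\overline{B^1(\,\cdot\,,(\,\cdot\,)_0)}$, each of which sits inside $\overline{B^1(G,\rho)}$ after pulling back along the quotient maps.

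The main obstacle I anticipate is \textbf{bookkeeping the coboundary subspaces along the tower}: at each stage Proposition~\ref{CenterProposition} produces a summand $\overline{B^1(G_i,(V_i)_0)}$ for the $i$-th central quotient $G_i$ and coefficient module $V_i=V^{Z_1\cdots Z_{i}}$, and one must check that the pullback of this summand to $Z^1(G,\rho)$ lands in $\overline{B^1(G,\rho)}$ — this requires that the canonical map $C^1(G_i,V_i)^{G_i}\to C^1(G,V)^G$ sends closed coboundaries to closed coboundaries, which follows from continuity of the inflation map plus the fact that $V_i\subseteq V$ is a closed $G$-invariant subspace, but needs to be stated carefully so that the $W$ in the final formula is genuinely a \emph{single} closed complement and the direct sum is \emph{topological}. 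The second delicate point is the final clause: when $V^{[G,G]}=V^G$, one must argue that the terminal $Z^1(G^{\mathrm{ab}},V^{[G,G]})$-summand contains no further coboundary contribution, i.e.\ that the ``harmonic'' part is exhausted — concretely, $V^{[G,G]}=V^G$ forces $V_0$ at the last abelian stage to coincide with $\overline{\mathrm{im}(\Delta_\mu)}$ in a way that makes $W=\overline{B^1(G,\rho)}$ exactly, and one verifies $\overline{B^1(G,\rho)}=\sum_i(\text{pullback of }\overline{B^1(G_i,(V_i)_0)})$ by comparing with the decomposition of Corollary~\ref{AbelianComplementedCorollary} applied at the bottom. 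Everything else — well-definedness of the maps, $G$-equivariance, continuity in the compact-open topology — is routine given Proposition~\ref{EquivalentCharacterizations} and the averaging identity from Theorem~\ref{ErgodicTheorem}.
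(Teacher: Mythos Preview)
There is a genuine gap. Your induction runs along the \emph{upper} central series, quotienting by $Z=\mathcal{Z}(G)$ at each step, and you assert that ``the terminal abelian quotient of the tower of successive central quotients is precisely $G^{\mathrm{ab}}$'' and that ``the fixed-point space stabilizes to $V^{[G,G]}$.'' Both claims are false. Applying the induction hypothesis to $G/Z$ yields $Z^{1}(G/Z,V^{Z})\cong Z^{1}((G/Z)^{\mathrm{ab}},(V^{Z})^{[G/Z,G/Z]})\oplus W'$, and one computes $(G/Z)^{\mathrm{ab}}=G/(Z\cdot[G,G])$ and $(V^{Z})^{[G/Z,G/Z]}=V^{Z\cdot[G,G]}$. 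Whenever $\mathcal{Z}(G)\not\subseteq[G,G]$---for instance $G=\mathbb{Z}\times H$ with $H$ the discrete Heisenberg group---this is a strictly smaller quotient than $G^{\mathrm{ab}}$ and a strictly smaller fixed-point space than $V^{[G,G]}$, so you do not land on the stated decomposition. The paper avoids this by inducting along the \emph{lower} central series: at each step one quotients by $N=\gamma_{c-1}(G)$, and since $\gamma_{c-1}(G)\subseteq\gamma_{1}(G)=[G,G]$ for $c\geq 2$, one has $(G/N)^{\mathrm{ab}}=G^{\mathrm{ab}}$ and $(V^{N})^{[G/N,G/N]}=V^{[G,G]}$ exactly.

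A second problem is your sublinearity step. The claim that $\lim_{n}n^{-1}\Vert b(e,z^{n})\Vert=0$ for every $z\in\mathcal{Z}(G)$ is not true in general: already for $G=\mathbb{Z}$ with the trivial action on $V=\mathbb{R}$ the cocycle $n\mapsto n$ grows linearly, and neither the wap hypothesis nor Proposition~\ref{LimitBehaviour} (which \emph{assumes} sublinear growth) helps. The paper's substitute is the geometric fact that every element of $\gamma_{c-1}(G)$ is \emph{distorted} in a finitely generated nilpotent group, i.e.\ $\lim_{n}n^{-1}\ell_{T}(g^{n})=0$; this immediately gives $\lim_{n}n^{-1}\Vert b(e,g^{n})\Vert=0$ for $g\in\gamma_{c-1}(G)$ and hence the factoring of $\widetilde{\mathbb{E}}_{\mu}\circ b$ through $G/N$, with no need to reduce to $V^{G}=\{0\}$ or to pass to $V/V^{G}$. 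Finally, your treatment of the clause $W=\overline{B^{1}(G,\rho)}$ when $V^{[G,G]}=V^{G}$ is too vague: the paper argues this by observing that cocycles valued in $V^{G}$ are genuine group homomorphisms (hence linear along cyclic subgroups), while every element of $\overline{B^{1}(G,\rho)}$ has sublinear growth by \cite{CTV07}, forcing $Z^{1}(G,V^{G})\cap\overline{B^{1}(G,\rho)}=\{0\}$.
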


\begin{proof} Let $G$ be a finitely generated $c$-step nilpotent group and let $G=\gamma_{0}(G)\triangleright...\triangleright\gamma_{c}(G)=\{e\}$ be its lower central series. We perform an induction over $c$.

For $c=1$ the group $G$ is Abelian so that $[G,G]=\{e\}$ and $G^{\text{ab}}=G$. Therefore $Z^{1}(G,\rho)=Z^{1}(G^{\text{ab}},V^{[G,G]})$ and the claim immediately follows by choosing $W$ to be trivial.

For the induction step assume that for any finitely generated nilpotent topological group of step $<c$ and every strongly operator continuous wap Banach $G$-module the assertion of the theorem holds, and let $G$ be finitely generated $c$-step nilpotent. The subgroup $N:=\gamma_{c-1}(G)\leq\mathcal{Z}(G)$ is finitely generated, so that we may choose a finite symmetric generating set $S$ of $N$. Consider the corresponding Baire probability measure $\mu:=(\#S)^{-1}\sum_{s\in S}\delta_{s}$. By the main theorem in \cite{Shanbhag91} the pair $(N,\mu)$ is Liouville so that an application of Proposition \ref{LiouvilleFixedPoint}, Theorem \ref{WeaklyAlmostPeriodicSetting} and Corollary \ref{AbelianComplementedCorollary} give $Z^{1}(G,V)\cong\overline{B^{1}(G,V_{0})}\oplus Z^{1}(G,V^{N})$, where $V_{0}:=\overline{\text{im}(\Delta_{\mu})}$. Let $T$ be a finite generating set of $G$ and denote the corresponding word length function on $G$ by $\ell_{T}$. It is well-known (see e.g. \cite[Lemma 14.15]{Drutu18}) that every element $g\in N$ is distorted in the sense that $\lim_{n\rightarrow\infty}n^{-1}\ell_{T}(g^{n})=0$. In particular, 
\[
\lim_{n\rightarrow\infty}n^{-1}\left\Vert b(e,g^{n})\right\Vert \leq M\left(\max_{t\in T}\Vert b(e,t)\Vert\right)\left(\lim_{n\rightarrow\infty}n^{-1}\left\Vert \ell_{T}(g^{n})\right\Vert \right)=0
\]
for all $b\in Z^{1}(G,\rho)$, $g\in N$, where $M:=\sup_{g\in G}\Vert\rho_{g}\Vert$. The same reasoning as in the proof of Proposition \ref{LimitBehaviour} hence implies that for any 1-cocycle $b\in Z^{1}(G,\rho)$ the element $\widetilde{\mathbb{E}}_{\mu}\circ b\in Z^{1}(G,V^{N})$ factors through $G/N$. It therefore follows that 
\[
Z^{1}(G,\rho)\cong\overline{B^{1}(G,V_{0})}\oplus Z^{1}(G/N,V^{N}).
\]
The quotient $G/N$ is a finitely generated nilpotent topological group of step at most $c-1$ and the induced quotient representation of $G/N$ is strongly operator continuous and weakly almost periodic. Our induction assumption gives a decomposition 
\[
Z^{1}(G/N,V^{N})\cong Z^{1}\left((G/N)^{\text{ab}},V^{[G/N,G/N]}\right)\oplus W
\]
with a suitable closed subspace $W\subseteq\overline{B^{1}\left(G/N,V^{N}\right)}$. Note that the group $(G/N)^{\text{ab}}$ is a quotient of $G^{\text{ab}}$ and that the kernel of the quotient map acts trivially on $V^{[G,G]}$. This implies that $Z^{1}\left((G/N)^{\text{ab}},V^{[G/N,G/N]}\right)\cong Z^{1}(G^{\text{ab}},V^{[G,G]})$ canonically and hence 
\[
Z^{1}(G,\rho)\cong\left(\overline{B^{1}(G,V_{0})}\oplus W\right)\oplus Z^{1}(G^{\text{ab}},V^{[G,G]}).
\]
The space $\overline{B^{1}(G,V_{0})}\oplus W$ identifies with a subspace of $\overline{B^{1}(G,\rho)}$, from which we conclude the first part of the theorem. It furthermore immediately follows that $\overline{H}_{c}^{1}(G,\rho)\cong\overline{H}_{c}^{1}(G^{\text{ab}},V^{[G,G]})$.

Now assume that $V^{[G,G]}=V^{G}$ so that $Z^{1}(G^{\text{ab}},V^{[G,G]})\cong Z^{1}(G,V^{G})\subseteq Z^{1}(G,\rho)$. For $b\in Z^{1}(G,V^{G})$ and $g,h\in G$ we have 
\[
b(e,gh)=b(e,h)-b(g,e)=b(e,g)+b(e,h),
\]
which means that the map $g\mapsto b(e,g)$ is linear. An application of \cite[Corollary 3.3]{CTV07} gives that the map $g\mapsto b^{\prime}(e,g)$ is sublinear for every $b^{\prime}\in\overline{B^{1}(G,\rho)}$. Thereby the intersection of $Z^{1}(G,V^{G})$ and $\overline{B^{1}(G,\rho)}$ is trivial. With the decomposition (\ref{eq:NilpotentDecomposition}) we obtain the second part of the theorem. \end{proof}

\begin{remark} By similar arguments and an application of Theorem \ref{ErgodicTheorem} it can be shown that for every finitely generated 2-step nilpotent discrete group $G$, every uniformly bounded Banach $G$-module $(V,\rho)$, and every subgroup $H\leq[G,G]$ the image of the restriction map $\overline{H}_{c}^{1}(G,\rho)\rightarrow\overline{H}_{c}^{1}(H,\rho|_{H})$ is zero. Compare this with \cite[Theorem 8]{FernosValette12}. \end{remark}

\vspace{3mm}


\subsection{Cohomology of products of groups\label{subsec:Cohomology-of-products}}

The results in Subsection \ref{subsec:Complementability} indicate that the notion of weak unique stationarity is particularly useful in combination with commuting structures. Naturally, this motivates the study of the cohomology of products of groups. In this setting complementation results have been particularly important for the deduction of superrigidity statements for suitable unitary representations on Hilbert spaces and Banach modules; see for instance \cite{Shalom00}, \cite{BaderFurmanGelanderMonod} and also Subsection \ref{Subsec:InductionAndSuperrigidity}.

The following theorem is an application of Proposition \ref{PiotrResult}.

\begin{theorem} \label{NowakDirectProduct} Let $G:=G_{1}\times G_{2}$ be a product of topological groups and $(V,\rho)$ a uniformly bounded strongly operator continuous Banach $G$-module. Assume that $G_{1}$ and $G_{2}$ admit Baire probability measures $\mu_{1}\in\mathcal{P}_{\sigma}(G_{1})$, $\mu_{2}\in\mathcal{P}_{\sigma}(G_{2})$ and null sets $\mathcal{N}_{1}\subseteq G_{1}$, $\mathcal{N}_{2}\subseteq G_{2}$ for which both $G_{1}\setminus\mathcal{N}_{1}$ and $G_{2}\setminus\mathcal{N}_{2}$ are contained in compact sets. Assume furthermore that $\Vert\rho_{\mu_{1}}\rho_{\mu_{2}}\Vert<1$ and that $V^{\mu_{1}}=V^{G_{1}}$ and $V^{\mu_{2}}=V^{G_{2}}$. Then there exists a topological isomorphism $H_{c}^{1}(G_{1}\times G_{2},V)\cong H_{c}^{1}(G_{1},V^{G_{2}})\oplus H_{c}^{1}(G_{2},V^{G_{1}})$. \end{theorem}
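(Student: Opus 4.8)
The plan is to use the hypothesis $\Vert\rho_{\mu_1}\rho_{\mu_2}\Vert<1$ to apply Proposition \ref{PiotrResult} to the product group $G=G_1\times G_2$ with the product measure $\mu:=\mu_1\times\mu_2$, and then to identify the resulting space of $\mu$-harmonic cocycles with the claimed direct sum. First I would check that $\mu$ is a Baire probability measure on $G$ with $\rho_\mu=\rho_{\mu_1}\rho_{\mu_2}=\rho_{\mu_2}\rho_{\mu_1}$ (the factors commute since $G_1$ and $G_2$ commute inside $G$), that $G\setminus(\mathcal{N}_1\times G_2 \cup G_1\times\mathcal{N}_2)$ is contained in a compact set, and that $\Vert\rho_\mu\Vert<1$; this puts us exactly in the setting of Proposition \ref{PiotrResult}, giving $Z^1(G,\rho)=B^1(G,\rho)\oplus\mathcal{H}^1_\mu(G,\rho)$ with $B^1(G,\rho)$ closed, hence $H^1_c(G,\rho)\cong\mathcal{H}^1_\mu(G,\rho)$ as topological vector spaces. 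Note also that $V^G=V^{G_1}\cap V^{G_2}$; since $\Vert\rho_\mu\Vert<1$ forces $V^G=\{0\}$, and since the hypotheses $V^{\mu_i}=V^{G_i}$ are exactly what is needed to invoke Corollary \ref{CenterFixedPoint}-type reasoning, one has a clean picture of the fixed-point spaces.

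Next I would analyze a $\mu$-harmonic cocycle $b\in\mathcal{H}^1_\mu(G,\rho)$, using the identification of $Z^1$ with maps $G\to V$ satisfying $b(gh)=\rho_g b(h)+b(g)$. Restricting to $G_1$ and $G_2$ gives cocycles $b|_{G_1}$, $b|_{G_2}$, and the cocycle relation together with commutativity of $G_1,G_2$ shows $b(g_1g_2)=b(g_1)+\rho_{g_1}b(g_2)=b(g_2)+\rho_{g_2}b(g_1)$. Applying $\rho_{\mu_2}$ to the restriction $b|_{G_1}$ and using that $\rho_{\mu_2}$ commutes with $\rho_{g_1}$ ($g_1\in G_1$), one sees $\rho_{\mu_2}\circ b|_{G_1}$ is again a $G_1$-cocycle; the condition $\int_G b\,d\mu=0$ combined with Fubini and with $\Vert\rho_{\mu_1}\Vert, \Vert\rho_{\mu_2}\Vert$ being (after replacing $b$ by a cohomologous cocycle via the projection of Proposition \ref{PiotrResult}) harmonic in each variable, should force $\rho_{\mu_2}\circ b|_{G_1}$ to take values in $V^{\mu_2}=V^{G_2}$ and similarly $\rho_{\mu_1}\circ b|_{G_2}$ to take values in $V^{G_1}$. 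The key computation is that the assignment $b\mapsto(\rho_{\mu_2}\circ b|_{G_1},\,\rho_{\mu_1}\circ b|_{G_2})$ — or rather the projections onto the fixed subspaces — lands in $Z^1(G_1,V^{G_2})\oplus Z^1(G_2,V^{G_1})$, descends to cohomology, and is a topological isomorphism with inverse induced by $(b_1,b_2)\mapsto\big((g_1,g_2)\mapsto b_1(g_1)+\rho_{g_1}b_2(g_2)\big)$.

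I would then verify that the two maps are mutually inverse on cohomology: given $(b_1,b_2)\in Z^1(G_1,V^{G_2})\oplus Z^1(G_2,V^{G_1})$ one checks directly that $b(g_1,g_2):=b_1(g_1)+\rho_{g_1}b_2(g_2)$ is a $G$-cocycle (using that $V^{G_2}$ is $G_1$-invariant and $b_2$ is $G_1$-fixed), and that modifying $b_i$ by a coboundary changes $b$ by a coboundary, so the map is well-defined on $H^1_c$; continuity in both directions follows from continuity of restriction maps, of $\rho_{\mu_i}$, and of the projection $P$ from Proposition \ref{PiotrResult}, together with the open mapping theorem applied in the Fréchet (or at least barrelled/complete) setting of the compact-open topology. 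The main obstacle I anticipate is the middle step: showing that the restriction $b|_{G_1}$ of a $\mu$-harmonic cocycle, after the canonical correction, actually represents a class coming from $V^{G_2}$-valued cocycles — i.e. controlling the interaction between $\mu_1$-harmonicity and $\mu_2$-harmonicity simultaneously, which is where the hypothesis $\Vert\rho_{\mu_1}\rho_{\mu_2}\Vert<1$ (rather than merely each $\Vert\rho_{\mu_i}\Vert<1$) does the real work, via a telescoping/Neumann-series argument forcing the "mixed" part of $b$ to be a coboundary and the "pure" parts to be fixed-vector valued.
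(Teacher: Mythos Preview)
Your overall plan matches the paper's: apply the Proposition~\ref{PiotrResult} splitting to get $Z^1(G,\rho)=B^1(G,\rho)\oplus\mathcal{H}^1_\mu(G,\rho)$, then identify $\mathcal{H}^1_\mu$ with $Z^1(G_1,V^{G_2})\oplus Z^1(G_2,V^{G_1})$ via restriction, with the evident inverse. But the crucial middle step is left as a gesture (``should force'', ``telescoping/Neumann-series''), and the mechanism you propose---post-composing $b|_{G_1}$ with $\rho_{\mu_2}$ and hoping this lands in $V^{\mu_2}$---is not the right one.

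The paper's argument for this step is short and concrete, and worth spelling out since it is where the hypothesis $\|\rho_{\mu_1}\rho_{\mu_2}\|<1$ actually bites. For harmonic $b$ (meaning $\int_{G_1}\int_{G_2}b(gh,e)\,d\mu_2(h)\,d\mu_1(g)=0$), set
\[
v:=\int_{G_1}b(g,e)\,d\mu_1(g),\qquad w:=\int_{G_2}b(h,e)\,d\mu_2(h).
\]
Integrating the cocycle identity $b(gh,e)=\rho_g(b(h,e))+b(g,e)$ over both variables gives $\rho_{\mu_1}(w)+v=0$; by symmetry $\rho_{\mu_2}(v)+w=0$. Composing, $\rho_{\mu_1}\rho_{\mu_2}(v)=v$, so $\|\rho_{\mu_1}\rho_{\mu_2}\|<1$ forces $v=w=0$. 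Now integrate the same identity only over $g\in G_1$: for each $h\in G_2$ one obtains $\rho_{\mu_1}(b(h,e))+v=\rho_h(v)+b(h,e)$, i.e.\ $b(h,e)=\rho_{\mu_1}(b(h,e))\in V^{\mu_1}=V^{G_1}$. Thus the restriction $b|_{G_2}$ \emph{already} takes values in $V^{G_1}$, and symmetrically $b|_{G_1}$ takes values in $V^{G_2}$; no post-composition with $\rho_{\mu_i}$ is needed. The map $\mathcal{H}^1_\mu\ni b\mapsto(b|_{G_1},b|_{G_2})$ is then the isomorphism, with inverse $(b_1,b_2)\mapsto\big((g_1,g_2)\mapsto b_1(g_1)+b_2(g_2)\big)$ (your factor $\rho_{g_1}$ is redundant since $b_2$ is $V^{G_1}$-valued).

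Two smaller points. First, treating $\mu_1\times\mu_2$ as a Baire probability measure on $G_1\times G_2$ is delicate, since in general $\mathcal{B}a(G_1)\otimes\mathcal{B}a(G_2)\neq\mathcal{B}a(G_1\times G_2)$; the paper sidesteps this by working exclusively with iterated integrals $\int_{G_1}\int_{G_2}$ and never forming the product measure. Second, the open mapping theorem is neither needed nor obviously available (the cocycle spaces need not be Fr\'echet without countability hypotheses on $G$): continuity of the inverse is checked directly from the explicit formula on the level of cocycles.
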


\begin{proof} First recall that Pettis' measurability theorem implies the well-definedness of the operators $\rho_{\mu_{1}}$ and $\rho_{\mu_{2}}$, and that the functions $G_{1}\ni g\mapsto b(g,e)$ and $G_{2}\ni h\mapsto b(h,e)$ are Bochner integrable for every 1-cocycle $b\in Z^{1}(G,\rho)$. By the same argument as in the proof of Proposition \ref{PiotrResult}, we find that $Z^{1}(G,\rho)$ decomposes via $B^{1}(G,\rho)\oplus\mathcal{H}^{1}(G,\rho)$, where $\mathcal{H}^{1}(G,\rho)$ consists of all elements $b\in Z^{1}(G)$ with $\int_{G_{1}}(\int_{G_{2}}b(gh,e)d\mu_{2}(h))d\mu_{1}(g)=0$. This allows to restrict to the consideration of harmonic cocycles. Denote the corresponding projection onto $\mathcal{H}^{1}(G,\rho)$ by $P$.

For $b\in\mathcal{H}^{1}(G,\rho)$ set $v:=\int_{G_{1}}b(g,e)d\mu_{1}(g)$ and $w:=\int_{G_{2}}b(h,e)d\mu_{2}(h)$. We then have 
\[
\rho_{\mu_{1}}(w)+v=\int_{G_{1}}\left(\int_{G_{2}}b(gh,e)d\mu_{2}(h)\right)d\mu_{1}(g)=0
\]
and similarly $\rho_{\mu_{2}}(v)+w=0$, so that $\rho_{\mu_{1}}\rho_{\mu_{2}}(v)=\rho_{\mu_{1}}(-w)=v$. From this we obtain with $\Vert\rho_{\mu_{1}}\rho_{\mu_{2}}\Vert<1$ that $v=0$. In the same way, $w=0$. For $h\in G_{2}$, 
\begin{eqnarray*}
 &  & b(h,e)=\rho_{h}(v)+b(h,e)=\int_{G_{1}}b(gh,e)d\mu_{1}(g)\\
 & = & \int_{G_{1}}\rho_{g}\left(b(h,e)\right)d\mu_{1}(g)+v=\rho_{\mu_{1}}\left(b(h,e)\right),
\end{eqnarray*}
which means that $b(h,e)$ is invariant under $\rho_{\mu_1}$ and hence $b|_{G_{2}\times G_{2}}\in Z^{1}(G_{2},V^{G_{1}})$. A similar calculation leads to $b|_{G_{1}\times G_{1}}\in Z^{1}(G_{1},V^{G_{2}})$. We may therefore introduce a well-defined linear map $\pi:H_{c}^{1}(G,\rho)\rightarrow H_{c}^{1}(G_{1},V^{G_{2}})\oplus H_{c}^{1}(G_{2},V^{G_{1}})$ via $[b]\mapsto[Pb|_{G_{1}\times G_{1}}]\oplus[Pb|_{G_{2}\times G_{2}}]$. It remains to show that $\pi$ is an isomorphism of topological vector spaces.\\

\begin{itemize}
\item \emph{Injectivity}: Let $b\in\mathcal{H}^{1}(G,\rho)$ with $Pb|_{G_{1}\times G_{1}}\in B^{1}(G_{1},V^{G_{2}})$ and $Pb|_{G_{2}\times G_{2}}\in B^{1}(G_{2},V^{G_{1}})$. Then there exist functions $f_{1}\in C^{1}(G_{1},V^{G_{2}})^{G_{1}}$ and $f_{2}\in C^{1}(G_{2},V^{G_{1}})^{G_{2}}$ with $Pb|_{G_{1}\times G_{1}}=\partial^{1}f_{1}$ and $Pb|_{G_{2}\times G_{2}}=\partial^{1}f_{2}$. For $g,g^{\prime}\in G_{1}$, $h,h^{\prime}\in G_{2}$ we have 
\begin{eqnarray*}
b(gh,g^{\prime}h^{\prime}) & = & b(e_{G},g^{\prime}h^{\prime})-b(e_{G},gh)\\
 & = & \rho_{g^{\prime}}\left(b(e_{G},h^{\prime})\right)+b(e_{G},g^{\prime})-\rho_{g}\left(b(e_{G},h)\right)-b(e_{G},g)\\
 & = & b(e_{G},h^{\prime})+b(e_{G},g^{\prime})-b(e_{G},h)-b(e_{G},g)\\
 & = & \partial^{1}f_{2}(e,h^{\prime})+\partial^{1}f_{1}(e,g^{\prime})-\partial^{1}f_{2}(e,h)-\partial^{1}f_{1}(e,g)\\
 & = & f_{1}(g^{\prime})+f_{2}(h^{\prime})-f_{1}(g)-f_{2}(h).
\end{eqnarray*}
It follows that $b\in B^{1}(G,\rho)$ and hence that $\pi$ is injective.
\item \emph{Surjectivity}: For $b_{1}\in Z^{1}(G_{1},V^{G_{2}})$ and $b_{2}\in Z^{1}(G_{2},V^{G_{1}})$ define $b:G\times G\rightarrow V$ by $b(gh,g^{\prime}h^{\prime}):=b_{1}(g,g^{\prime})+b_{2}(h,h^{\prime})$, where $g,g^{\prime}\in G_{1}$, $h,h^{\prime}\in G_{2}$. For $g,g_{1}^{\prime},g_{2}^{\prime}\in G_{1}$, $h,h_{1}^{\prime},h_{2}^{\prime}\in G_{2}$ we have 
\begin{eqnarray*}
b(ghg_{1}^{\prime}h_{1}^{\prime},ghg_{2}^{\prime}h_{2}^{\prime}) & = & b_{1}(gg_{1}^{\prime},gg_{2}^{\prime})+b_{2}(hh_{1}^{\prime},hh_{2}^{\prime})\\
 & = & \rho_{g}\left(b_{1}(g_{1}^{\prime},g_{2}^{\prime})\right)+\rho_{h}\left(b_{2}(h_{1}^{\prime},h_{2}^{\prime})\right)\\
 & = & \rho_{gh}\left(b_{1}(g_{1}^{\prime},g_{2}^{\prime})+b_{2}(h_{1}^{\prime},h_{2}^{\prime})\right)\\
 & = & \rho_{gh}\left(b(g_{1}^{\prime}h_{1}^{\prime},g_{2}^{\prime}h_{2}^{\prime})\right),
\end{eqnarray*}
and 
\begin{eqnarray*}
\partial^{2}b(gh,g^{\prime}h^{\prime},g^{\prime\prime}h^{\prime\prime}) & = & b(g^{\prime}h^{\prime},g^{\prime\prime}h^{\prime\prime})-b(gh,g^{\prime\prime}h^{\prime\prime})+b(gh,g^{\prime}h^{\prime})\\
 & = & \left(b_{1}(g^{\prime},g^{\prime\prime})-b_{1}(g,g^{\prime\prime})+b_{1}(g,g^{\prime})\right)\\
 &  & +\left(b_{2}(h^{\prime},h^{\prime\prime})-b_{2}(h,h^{\prime\prime})+b_{2}(h,h^{\prime})\right)\\
 & = & 0,
\end{eqnarray*}
so that $b\in Z^{1}(G,\rho)$ with $[Pb|_{G_{1}\times G_{1}}]=[b_{1}]$, $[Pb|_{G_{2}\times G_{2}}]=[b_{2}]$. We obtain that $\pi$ is surjective. 
\item \emph{Homeomorphism}: Consider the map $Z^{1}(G,\rho)\rightarrow Z^{1}(G_{1},V^{G_{2}})\oplus Z^{1}(G_{2},V^{G_{1}})$ given by $b\mapsto Pb|_{G_{1}\times G_{1}} \oplus Pb|_{G_{2}\times G_{2}}$. This map is clearly continuous so that the induced map $Z^{1}(G,\rho)\rightarrow H_{c}^{1}(G_{1},V^{G_{2}})\oplus H_{c}^{1}(G_{2},V^{G_{1}})$ is also continuous. From the universal property of the quotient, it follows that $\pi$ must be continuous as well. The continuity of $\pi^{-1}$ follows in a similar way by considering the map $\eta:Z^{1}(G_{1},V^{G_{2}})\oplus Z^{1}(G_{2},V^{G_{1}})\rightarrow Z^{1}(G,\rho)$ given by $\eta(b_{1}\oplus b_{2})(gh,g^{\prime}h^{\prime}):=b_{1}(g,g^{\prime})+b_{2}(h,h^{\prime})$ for $b_{1}\in Z^{1}(G_{1},V^{G_{2}})$, $b_{2}\in Z^{1}(G_{2},V^{G_{1}})$, $g,g^{\prime}\in G_{1}$, $h,h^{\prime}\in G_{2}$. 
\end{itemize}
This finishes the proof. \end{proof}

\begin{remark} \label{DecompositionRemark1} Under the assumptions of Theorem \ref{NowakDirectProduct}, its assertion can alternatively be formulated as follows: every 1-cocycle in $Z^{1}(G,\rho)$ is cohomologous to a sum of the form $b_{1}+b_{2}$ with $b_{1},b_{2}\in Z^{1}(G,\rho)$, where $b_{1}$ takes values in $V^{G_{2}}$ and where $b_{2}$ takes values in $V^{G_{1}}$. Furthermore, $b_{1}$ and $b_{2}$ factor to elements in $Z^{1}(G_{1},V^{G_{2}})$ and $Z^{1}(G_{2},V^{G_{1}})$. \end{remark}

\begin{definition} Let $G$ be a topological group. A uniformly bounded Banach $G$-module $(V,\rho)$ is said to have \emph{almost invariant unit vectors} if for all compact subsets $K\subseteq G$ and $\varepsilon>0$ there exists an element $v\in V$ with $\Vert v\Vert=1$ and $\Vert v-\rho_{g}(v)\Vert<\varepsilon$ for all $g\in K$. \end{definition}

Theorem \ref{NowakDirectProduct} can be viewed as a partial generalization of \cite[Theorem C]{BaderFurmanGelanderMonod}, as the following remark illustrates.

\begin{remark} \label{UniformlyConvexExample} By applying the results in \cite{DrutuNowak19}, the assertion of Theorem \ref{NowakDirectProduct} in particular applies to isometric representations of compactly generated topological groups without almost invariant unit vectors on uniformly convex Banach spaces. \end{remark}

\begin{theorem} \label{WeaklyProductDecomposition} Let $G:=G_{1}\times G_{2}$ be a product of topological groups, $\mu_{1}\in\mathcal{P}_{\sigma}(G_{1})$, $\mu_{2}\in\mathcal{P}_{\sigma}(G_{2})$ Baire probability measures and $(V,\rho)$ a uniformly bounded strongly operator continuous Banach $G$-module. Assume that $(V,\rho|_{G_{1}})$ is weakly uniquely $\mu_{1}$-stationary without almost invariant unit vectors, and that $(V,\rho|_{G_{2}})$ is weakly uniquely $\mu_{2}$-stationary. Then there exists an embedding of topological vector spaces $H_{c}^{1}(G_{1}\times G_{2},V)\hookrightarrow H_{c}^{1}(G_{1},V^{\mu_{2}})\oplus H_{c}^{1}(G_{2},V^{\mu_{1}})$ via $[b]\mapsto[\widetilde{\mathbb{E}}_{\mu_{2}}\circ(b|_{G_{1}})]\oplus[\widetilde{\mathbb{E}}_{\mu_{1}}\circ(b|_{G_{2}})]$.

If furthermore $V^{\mu_{1}}=V^{G_{1}}$ and $V^{\mu_{2}}=V^{G_{2}}$, then the embedding above is surjective, i.e. $H_{c}^{1}(G_{1}\times G_{2},V)\cong H_{c}^{1}(G_{1},V^{G_{2}})$. \end{theorem}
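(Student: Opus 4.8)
The plan is to construct the embedding $\pi\colon H_c^1(G,V)\to H_c^1(G_1,V^{\mu_2})\oplus H_c^1(G_2,V^{\mu_1})$ directly and then verify injectivity and bicontinuity. First I would note that since $G_2\subseteq C_G(G_1)$, the operator $\rho_{\mu_2}$ (built from a measure on $G_2$) commutes with $\rho_{g}$ for all $g\in G_1$, so by exactly the computation in the proof of Theorem \ref{MainTheorem} the map $\widetilde{\mathbb{E}}_{\mu_2}\circ(b|_{G_1\times G_1})$ is $G_1$-equivariant, has values in $V^{\mu_2}$, and is a $1$-cocycle; symmetrically for $\widetilde{\mathbb{E}}_{\mu_1}\circ(b|_{G_2\times G_2})$. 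Both assignments are continuous on cochains (the averages converge uniformly on compacta by Proposition \ref{EquivalentCharacterizations} together with Proposition \ref{ConvexHull}, exactly as in the proof of Theorem \ref{MainTheorem}) and clearly send coboundaries to coboundaries, so $\pi$ is a well-defined continuous linear map. Continuity of $\pi$ on the level of $H_c^1$ then follows from the universal property of the quotient topology, as in the last bullet of the proof of Theorem \ref{NowakDirectProduct}.

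The key step is injectivity, and here the hypothesis that $(V,\rho|_{G_1})$ has no almost invariant unit vectors enters. Suppose $b\in Z^1(G,\rho)$ with $\widetilde{\mathbb{E}}_{\mu_2}\circ(b|_{G_1})$ and $\widetilde{\mathbb{E}}_{\mu_1}\circ(b|_{G_2})$ both coboundaries. By Theorem \ref{MainTheorem} applied with $(N,C)=(G_1,G_2)$ the restriction $b|_{G_1\times G_1}$ is almost cohomologous to $\widetilde{\mathbb{E}}_{\mu_2}\circ(b|_{G_1})$, hence $b|_{G_1\times G_1}\in\overline{B^1(G_1,\rho|_{G_1})}$; but a representation of a topological group without almost invariant unit vectors that is weakly uniquely $\mu_1$-stationary has $B^1(G_1,\rho|_{G_1})$ closed and in fact $\mathcal{H}^1_{\mu_1}$-complemented with trivial harmonic part when combined with the stationarity decomposition $V=\overline{\operatorname{im}\Delta_{\mu_1}}\oplus V^{\mu_1}$ — concretely, on $\overline{\operatorname{im}\Delta_{\mu_1}}$ one has no almost invariant vectors so $\Delta_{\mu_1}$ restricted there is bounded below (this is where the obstacle lies; one needs the quantitative implication ``no almost invariant unit vectors'' $\Rightarrow$ the Markov operator has spectral radius-type gap on $\overline{\operatorname{im}\Delta_{\mu_1}}$, which should follow from Proposition \ref{ConvexHull} and a uniform-convexity-free convexity argument, or one invokes it through \cite{DrutuNowak19} as in Remark \ref{UniformlyConvexExample}). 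Granting that, $b|_{G_1\times G_1}\in B^1(G_1,\rho|_{G_1})$, say $b|_{G_1}=\partial^1 f_1$ with $f_1(g)=\rho_g(v)-v$ for some $v\in V$; after subtracting the global coboundary $\partial^1\bar f_1$ on $G$ we may assume $b|_{G_1\times G_1}=0$, i.e. $b(e_G,g)=0$ for $g\in G_1$. Then the cocycle identity forces $b(gh,g'h')=\rho_{g'}(b(e_G,h'))-\rho_g(b(e_G,h))=b(e_G,h')-b(e_G,h)$ using $G_1$-triviality and commutation, so $b$ depends only on the $G_2$-coordinates and equals (the inflation of) $b|_{G_2\times G_2}$; since the latter is assumed a coboundary after applying $\widetilde{\mathbb{E}}_{\mu_1}$ and, by Theorem \ref{MainTheorem} again, $b|_{G_2}$ is almost cohomologous to it, we conclude $b\in\overline{B^1(G,\rho)}$ and then, by the same no-almost-invariant-vectors argument on the $G_1$-action applied to the ambient cocycle (its values already lie in a space where $G_1$ acts without almost invariant unit vectors after the reduction), $b\in B^1(G,\rho)$. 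This gives $[b]=0$.

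For the bicontinuity, I would exhibit a continuous right inverse construction as in Theorem \ref{NowakDirectProduct}: given representatives $b_1\in Z^1(G_1,V^{\mu_2})$, $b_2\in Z^1(G_2,V^{\mu_1})$ — noting $V^{\mu_2}\subseteq V$ is $G_1$-invariant and $\rho_{\mu_1}$-stationary, etc. — set $\eta(b_1\oplus b_2)(gh,g'h'):=b_1(g,g')+b_2(h,h')$; the verification that this lands in $Z^1(G,\rho)$ and that $\pi\circ[\eta]=\mathrm{id}$ up to coboundaries is the same routine computation as in the surjectivity bullet of Theorem \ref{NowakDirectProduct}, and continuity of $\eta$ on cochains gives continuity of $\pi^{-1}$ on the image. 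Finally, for the last sentence: if additionally $V^{\mu_1}=V^{G_1}$ and $V^{\mu_2}=V^{G_2}$, then one checks as in the proof of Proposition \ref{CenterProposition} that every element of $Z^1(G_2,V^{G_1})$ is constant on $G_1$-cosets — for $z\in G_1$ and $b_2\in Z^1(G_2,V^{G_1})$ the value $b_2(e,z)$ lies in $V^{G_1}$, and combined with the $G_1$-triviality forced by $V^{\mu_1}=V^{G_1}$ this collapses the second summand; more precisely, under $V^{\mu_2}=V^{G_2}$ any cocycle on $G_2$ with values in $V^{G_1}$ that is $\widetilde{\mathbb{E}}_{\mu_1}$-averaged takes values in $V^{G_1}\cap V^{G_2}=(V^{G_1})^{G_2}$ and the corresponding $G_2$-action is trivial on its range in the relevant sense, forcing $H_c^1(G_2,V^{\mu_1})=0$ — so the embedding becomes $H_c^1(G,V)\cong H_c^1(G_1,V^{G_2})$. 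The main obstacle throughout is the passage from ``$b|_{G_1}\in\overline{B^1}$'' to ``$b|_{G_1}\in B^1$'' (closedness of coboundaries), which is precisely where the no-almost-invariant-unit-vectors hypothesis on $\rho|_{G_1}$ must be used quantitatively.
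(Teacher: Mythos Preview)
Your overall strategy for injectivity differs from the paper's, and while it \emph{can} be made to work, your write-up contains a real gap and misses the observation that would close it cleanly.

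First, the step you flag as an ``obstacle'' is not one: the fact that $B^1(G_1,\rho|_{G_1})$ is closed when $\rho|_{G_1}$ has no almost invariant unit vectors is elementary and needs no spectral-gap input. If $\partial^1 f_n\to c$ with $f_n(g)=\rho_g(v_n)$, then restricting to the witnessing compact $K\subseteq G_1$ gives $\sup_{g\in K}\|(v_n-v_m)-\rho_g(v_n-v_m)\|\to 0$, hence $\|v_n-v_m\|\to 0$; so $(v_n)$ is Cauchy and $c$ is a coboundary. (The same argument shows $B^1(G,\rho)$ is closed.) You do not need $\Delta_{\mu_1}$ bounded below, nor \cite{DrutuNowak19}.

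The genuine gap is later. After reducing to $b'|_{G_1\times G_1}=0$ you assert ``$b'|_{G_2}\in\overline{B^1(G_2,\rho|_{G_2})}$ hence $b'\in\overline{B^1(G,\rho)}$''. This inference is unjustified: the inflation to $G$ of a $G_2$-coboundary $\rho_h(w)-w$ is $\rho_{gh}(w)-w$, which does \emph{not} equal $b'(e,gh)=\rho_h(w)-w$ unless $w\in V^{G_1}$; approximating $b'|_{G_2}$ by $G_2$-coboundaries gives no control over the $G_1$-direction. What you are missing is the simple consequence of your own hypothesis: no almost invariant unit vectors for $\rho|_{G_1}$ forces $V^{G_1}=0$ (any nonzero $G_1$-fixed unit vector would violate the uniform lower bound). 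Since $b'|_{G_1}=0$ implies that $b'(e,h)\in V^{G_1}$ for all $h\in G_2$ (use $b'(e,gh)=b'(e,hg)$), you get $b'\equiv 0$ immediately, without invoking $\widetilde{\mathbb E}_{\mu_1}$ or Theorem~\ref{MainTheorem} a second time. The same observation $V^{G_1}=0$ is what makes the second summand $H_c^1(G_2,V^{\mu_1})=H_c^1(G_2,V^{G_1})$ vanish in the final statement; your argument for that part (plugging $z\in G_1$ into a $G_2$-cocycle) does not parse.

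By contrast, the paper's proof never reduces to $b|_{G_1}=0$; it only arranges $\widetilde{\mathbb E}_{\mu_2}\circ(b|_{G_1})=0$ and then builds explicit approximating $G$-coboundaries $\partial^1\xi_T$, with $\xi_T(x)=\rho_x\bigl(\sum_{y\in G_2}\delta_T(y)\,b(e,y)\bigr)$, showing convergence on \emph{all} of $G$ by a direct estimate that uses the no-almost-invariant-vectors inequality pointwise. Your route, once repaired via $V^{G_1}=0$, is actually shorter.
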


\begin{proof} Denote the map above by $\iota$. In the following, we demonstrate that $\iota$ is a well-defined linear homeomorphism onto its image.\\

\begin{itemize}
\item \emph{Well-defined}: By $\mu_{2}\in\mathcal{P}_{\sigma}(G_{2})$ the operator $\rho_{\mu_{2}}$ commutes with all elements $\rho_{g}$, $g\in G_{1}$. From Proposition \ref{EquivalentCharacterizations} it therefore follows that $\widetilde{\mathbb{E}}_{\mu_{2}}\circ(f|_{G_{1}})\in C^{1}(G_{1},V^{\mu_{2}})^{G_{1}}$ and similarly $\widetilde{\mathbb{E}}_{\mu_{1}}\circ(f|_{G_{2}})\in C^{1}(G_{2},V^{\mu_{1}})^{G_{2}}$ for every $f\in C^{1}(G,V)^{G}$. But then 
\begin{eqnarray*}
\widetilde{\mathbb{E}}_{\mu_{2}}\circ\left((\partial^{1}f)|_{G_{1}\times G_{1}}\right)=\lim_{\lambda\in\Lambda}\partial^{1}\left(\widetilde{\mathbb{E}}_{\mu_{2}}\circ(f|_{G_{1}})\right)\in B^{1}(G_{1},V^{\mu_{2}})
\end{eqnarray*}
and $\widetilde{\mathbb{E}}_{\mu_{1}}\circ\left((\partial^{1}f)|_{G_{2}\times G_{2}}\right)\in B^{1}(G_{2},V^{\mu_{1}})$. This implies that $\iota$ is a well-defined linear map.
\item \emph{Injectivity}: Let $b\in Z^{1}(G,\rho)$ be a 1-cocycle for which $\widetilde{\mathbb{E}}_{\mu_{2}}\circ(b|_{G_{1}\times G_{1}})\in B^{1}(G_{1},V^{\mu_{2}})$, $\widetilde{\mathbb{E}}_{\mu_{1}}\circ(b|_{G_{2}\times G_{2}})\in B^{1}(G_{2},V^{\mu_{1}})$ and let $f\in C^{1}(G,V^{\mu_{2}})^{G_{1}}$ with $\widetilde{\mathbb{E}}_{\mu_{2}}\circ(b|_{G_{1}\times G_{1}})=\partial^{1}(f|_{G_{1}})$. By replacing the 1-cocycle $b$ by $b-\partial^{1}f$ we may assume that $\widetilde{\mathbb{E}}_{\mu_{2}}\circ(b|_{G_{1}\times G_{1}})=0$. Now consider the set $\mathcal{S}$ of non-empty finite subsets of $G_{1}\times G_{1}\leq G\times G$, partially ordered by inclusion. By the proof of Theorem \ref{MainTheorem} we find a net $(\delta_{T})_{T\in\mathcal{S}}\subseteq\text{conv}\{\rho_{g}\mid g\in G_{2}\}$ with $(1-\delta_{T})\circ(b|_{G_{1}\times G_{1}})\in B^{1}(G_{1},\rho|_{G_{1}})$ for every $T\in\mathcal{S}$ and $(1-\delta_{T})\circ(b|_{G_{1}\times G_{1}})\rightarrow(1-\widetilde{\mathbb{E}}_{\mu_{2}})\circ(b|_{G_{1}\times G_{1}})$ in $Z^{1}(G_{1},\rho|_{G_{1}})$. For $T\in\mathcal{S}$ and $x\in G_{2}$ let $0\leq\delta_{T}(x)\leq1$ be the corresponding coefficient of $\delta_{T}$, so that $\delta_{T}=\sum_{x\in G_{2}}\delta_{T}(x)\rho_{x}$. Since $(V,\rho|_{G_{1}})$ has no almost invariant unit vectors, there exists a non-trivial compact subset $K\subseteq G_{1}$ and $\varepsilon>0$ such that $\sup_{g\in K}\Vert v-\rho_{g}(v)\Vert\geq\varepsilon\Vert v\Vert$ for all $v\in V$. Let $K^{\prime}\subseteq G_{2}$ be a compact subset. For $g\in K$, $h\in K^{\prime}$ we have 
\begin{eqnarray*}
b(e_{G},gh) & = & \rho_{h}\left((1-\widetilde{\mathbb{E}}_{\mu_{2}})\circ b(e_{G},g)+\widetilde{\mathbb{E}}_{\mu_{2}}\circ b(e_{G},g)\right)+b(e_{G},h)\\
 & = & \rho_{h}\left((1-\widetilde{\mathbb{E}}_{\mu_{2}})\circ b(e_{G},g)\right)+b(e_{G},h)
\end{eqnarray*}
so that 
\begin{eqnarray*}
\sup_{g\in K,h\in K^{\prime}}\left\Vert \sum_{x\in G_{2}}\delta_{T}(x)\left(\rho_{h}\circ b(e_{G},x)-\rho_{h}\circ b(e_{G},gx)\right)\right\Vert \rightarrow0
\end{eqnarray*}
and hence by subtracting, 
\[
\sup_{g\in K^{\prime},h\in K}\left\Vert (1-\rho_{g})\left(\sum_{x\in G_{2}}\delta_{T}(x)\left(b(e_{G},hx)-b(e_{G},x)\right)\right)\right\Vert \rightarrow0.
\]
The assumptions on $K$ then imply that the net of functions of the form $G_{2}\ni h\mapsto\sum_{x\in G_{2}}\delta_{T}(x)\left(b(e_{G},hx)-b(e_{G},x)\right)$ converges to 0 uniformly on compact subsets. Set $M:=\sup_{g\in G}\Vert\rho_{g}\Vert$. From the estimate 
\begin{eqnarray*}
 &  & \Vert b(e_{G},gh)-\sum_{x\in G_{2}}\delta_{T}(x)(1-\rho_{gh})\circ b(e_{G},x)\Vert\\
 & = & \Vert\rho_{h}\left((1-\widetilde{\mathbb{E}}_{\mu_{2}})\circ b(e_{G},g)\right)+b(e_{G},h)-\sum_{x\in G_{2}}\delta_{T}(x)(1-\rho_{gh})\circ b(e_{G},x)\Vert\\
 & = & \Vert\rho_{h}((1-\widetilde{\mathbb{E}}_{\mu_{2}})\circ b(e_{G},g)-\sum_{x\in G_{2}}\delta_{T}(x)(1-\rho_{g})\circ b(e_{G},x))\\
 &  & \qquad+\sum_{x\in G_{2}}\delta_{T}(x)\left(b(e_{G},hx)-b(e_{G},x)\right)\Vert\\
 & \leq & M\Vert(1-\widetilde{\mathbb{E}}_{\mu_{2}})\circ b(e_{G},g)-(1-\delta_{T})\circ b(e_{G},g)\Vert\\
 &  & \qquad+\Vert\sum_{x\in G_{2}}\delta_{T}(x)\left(b(e_{G},hx)-b(e_{G},x)\right)\Vert
\end{eqnarray*}
it then follows that $\partial^{1}\xi_{T}\rightarrow b$ in $Z^{1}(G,\rho)$, where $\xi_{T}\in C^{1}(G,V)^{G}$ is given by $\xi_{T}(g):=\sum_{x\in G_{2}}\delta_{T}(x)\rho_{g}\circ b(e_{G},x)$. Since $(V,\rho|_{G_{1}})$ admits no almost invariant unit vectors, this implies that $b\in B^{1}(G,\rho)$. We thereby obtain that the map $\iota$ is injective.
\item \emph{Continuity}: The continuity of the map $\iota$ and its inverse follow similar to the proof of Theorem \ref{NowakDirectProduct}. 
\end{itemize}
This implies the first part of the statement of the theorem. The surjectivity of $\iota$ in the case where $V^{\mu_{1}}=V^{G_{1}}$ and $V^{\mu_{2}}=V^{G_{2}}$ follows in the same way as in the proof of Theorem \ref{NowakDirectProduct}. \end{proof}

\begin{remark} \label{DecompositionRemark2} Similar to Remark \ref{DecompositionRemark1}, under the assumptions of the second part of Theorem \ref{WeaklyProductDecomposition}, its assertion can alternatively be formulated as follows: every 1-cocycle in $Z^{1}(G,\rho)$ is cohomologous to a cocycle $b\in Z^{1}(G,\rho)$ which takes values in $V^{G_{2}}$ and which factors to an element in $Z^{1}(G_{1},V^{G_{2}})$. \end{remark}

Theorem \ref{WeaklyProductDecomposition} has in combination with Theorem \ref{WapConditional} the following immediate consequence.

\begin{corollary} Let $G:=G_{1}\times G_{2}$ be a product of topological groups and $(V,\rho)$ a strongly operator continuous wap Banach $G$-module. Assume that $V$ is separable and that $(V,\rho|_{G_{1}})$ has no almost invariant unit vectors. Then there exists a topological isomorphism ${H}_{c}^{1}(G_{1}\times G_{2},V)\cong{H}_{c}^{1}(G_{1},V^{G_{2}})$. \end{corollary}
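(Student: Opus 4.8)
The plan is to combine Theorem \ref{WeaklyProductDecomposition} with Theorem \ref{WapConditional}; the work consists almost entirely of checking that the hypotheses of the former can be arranged. First I would observe that the restricted modules $(V,\rho|_{G_{1}})$ and $(V,\rho|_{G_{2}})$ are again wap: for a subgroup $H\leq G$ and $v\in V$ one has $H.v\subseteq G.v$, and a subset of a relatively weakly compact set is relatively weakly compact. They remain strongly (hence weakly) operator continuous and $V$ remains separable. By Theorem \ref{WeaklyAlmostPeriodicSetting} both $(V,\rho|_{G_{1}})$ and $(V,\rho|_{G_{2}})$ are therefore BP-integrable and weakly uniquely $\nu$-stationary for every Baire probability measure $\nu$ on the respective factor.

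Next I would apply Theorem \ref{WapConditional} to $G_{1}$ and to $G_{2}$ separately: since $(V,\rho|_{G_{1}})$ is a separable wap Banach $G_{1}$-module, there is $\mu_{1}\in\mathcal{P}_{\sigma}(G_{1})$ with $\mathbb{E}_{\mu_{1}}(\widehat{v})\in\iota_{V}(V^{G_{1}})$ for all $v\in V$, where $\mathbb{E}_{\mu_{1}}$ is as in Proposition \ref{Existence}; likewise there is $\mu_{2}\in\mathcal{P}_{\sigma}(G_{2})$ with $\mathbb{E}_{\mu_{2}}(\widehat{v})\in\iota_{V}(V^{G_{2}})$ for all $v$. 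I then claim $V^{\mu_{1}}=V^{G_{1}}$. The inclusion $V^{G_{1}}\subseteq V^{\mu_{1}}$ is immediate (e.g. from Remark \ref{BPRemarks} (ii), since $\rho_{\mu_{1}}(v)$ lies in the norm closed convex hull of $G_{1}.v$). Conversely, if $v\in V^{\mu_{1}}$ then $\widehat{v}\in(V^{\ast\ast})^{\mu_{1}}=\mathrm{im}(\mathbb{E}_{\mu_{1}})$ by Proposition \ref{Existence}, whence $\widehat{v}=\mathbb{E}_{\mu_{1}}(\widehat{v})\in\iota_{V}(V^{G_{1}})$ and so $v\in V^{G_{1}}$. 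The identical argument gives $V^{\mu_{2}}=V^{G_{2}}$.

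Finally I would feed this into Theorem \ref{WeaklyProductDecomposition}: $(V,\rho|_{G_{1}})$ is weakly uniquely $\mu_{1}$-stationary and, by hypothesis, has no almost invariant unit vectors; $(V,\rho|_{G_{2}})$ is weakly uniquely $\mu_{2}$-stationary; and $V^{\mu_{1}}=V^{G_{1}}$, $V^{\mu_{2}}=V^{G_{2}}$. The second part of that theorem then yields the topological isomorphism $H_{c}^{1}(G_{1}\times G_{2},V)\cong H_{c}^{1}(G_{1},V^{G_{2}})$, as desired.

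There is no genuine obstacle here beyond the bookkeeping above; the one point that deserves a line of care is the identification $V^{\mu_{i}}=V^{G_{i}}$, which is exactly what upgrades the "$\mu$-level" conclusion of Theorem \ref{WeaklyProductDecomposition} to the clean "$G_{i}$-level" statement and is the place where separability (via Theorem \ref{WapConditional}) enters. One should also note in passing that it suffices to have \emph{some} choice of $\mathbb{E}_{\mu_{i}}$ as in Proposition \ref{Existence} with the stated range property, which is precisely what Theorem \ref{WapConditional} supplies.
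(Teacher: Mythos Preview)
Your proposal is correct and follows exactly the route the paper indicates: the corollary is stated there as an immediate consequence of Theorem \ref{WeaklyProductDecomposition} combined with Theorem \ref{WapConditional}, and you have simply spelled out the verification of the hypotheses (including the key identification $V^{\mu_i}=V^{G_i}$) that the paper leaves implicit. There is nothing to add.
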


\vspace{3mm}


\subsection{Induction and superrigidity\label{Subsec:InductionAndSuperrigidity}}

In \cite{Shalom00} and \cite{BaderFurmanGelanderMonod} decompositions as in Subsection \ref{subsec:Cohomology-of-products} were used to obtain superrigidity theorems for the continuous cohomology of suitable unitary representations on Hilbert spaces and Banach modules. We finish this article by following this path. For this, we first remind the reader of basic notions surrounding the theory of lattices and induced representations.

\vspace{3mm}


\subsubsection{Lattices}

Let $G=G_{1}\times...\times G_{n}$ be a product of locally compact groups and $\Gamma\leq G$ a discrete subgroup. Fix a Haar measure $\nu$ on $G$, normalized so that the induced measure on $G/\Gamma$ has total mass one. Recall that a subset $\mathcal{D}\subseteq G$ is called a \emph{fundamental domain} for $\Gamma$ if $G=\mathcal{D}\Gamma$ with $\gamma\mathcal{D}\cap\gamma^{\prime}\mathcal{D}=\emptyset$ for all $\gamma,\gamma^{\prime}\in\Gamma$ with $\gamma\neq\gamma^{\prime}$. The discrete subgroup $\Gamma$ is called a \emph{lattice} if it admits a Borel fundamental domain $\mathcal{D}$ with $\nu(D)<\infty$. A lattice is \emph{irreducible} if the image of every projection $p_{i}:\Gamma\rightarrow G_{i}$ with $1\leq i\leq n$ has dense image in $G_{i}$.

Given a fundamental domain $\mathcal{D}$ of a lattice $\Gamma\leq G_{1}\times...\times G_{n}$, we can consider the associated \emph{cocycle} $\chi_{\mathcal{D}}:G\times\mathcal{D}\rightarrow\Gamma$ defined by $\chi_{\mathcal{D}}(g,x):=\gamma$, where $\gamma$ is the unique element in $\Gamma$ with $gx\gamma\in\mathcal{D}$. One checks that $\chi_{\mathcal{D}}(gh,x)=\chi_{\mathcal{D}}(h,x)\chi\left(g,hx\chi_{\mathcal{D}}(h,x)\right)$ for all $g,h\in G$, $x\in\mathcal{D}$.

\begin{definition}[{\cite[Definition 8.2]{BaderFurmanGelanderMonod}}] \label{p-integrableDefinition} For $p>0$ a lattice $\Gamma\leq G_{1}\times...\times G_{n}$ is called \emph{$p$-integrable} if one of the following two statements holds:
\begin{enumerate}
\item The lattice is \emph{cocompact} (i.e. the quotient $G/\Gamma$ is compact);
\item The lattice it is finitely generated and for some finite generating set $S\subseteq G$ there exists a Borel fundamental domain $\mathcal{D}\subseteq G$ with $\int_{\mathcal{D}}\ell_{S}^{p}\left(\chi_{\mathcal{D}}(g,x)\right)d\nu(x)<\infty$ for all $g\in G$. Here $\ell_{S}$ denotes the word length function associated with $S$.
\end{enumerate}
\end{definition}

\vspace{3mm}


\subsubsection{Induced representations\label{subsec:Induced-representations}}

Let $G=G_{1}\times...\times G_{n}$ be a product of locally compact second countable groups and $\Gamma\leq G$ a $p$-integrable lattice. As before, fix a Haar measure $\nu$ on $G$, normalized so that the induced measure on $G/\Gamma$ has total mass one, and denote the corresponding measure on $G/\Gamma$ by $\widetilde{\nu}$. For a Banach space $V$ and $1<p<\infty$ consider the Banach space $L^{p}(G/\Gamma,V)$ of (equivalence classes of) measurable functions $f$ on $G/\Gamma$ with coefficients in $V$ for which $\Vert f\Vert$ is $p$-integrable. Given a strongly operator continuous isometric linear representation $\rho:\Gamma\rightarrow\text{GL}(V)$ and a fundamental domain $\mathcal{D}$ for $\Gamma$, we may introduce the induced representation $\text{Ind}_{\Gamma}^{G}(\rho):G\rightarrow\text{GL}(L^{p}(G/\Gamma,V))$ by 
\[
\left(\text{Ind}_{\Gamma}^{G}(\rho)_{g}(\xi)\right)(x\Gamma):=\rho_{\chi_{\mathcal{D}}(g^{-1},x)}\left(\xi\left(g^{-1}x\chi_{\mathcal{D}}(g^{-1},x)\Gamma\right)\right)
\]
for $x\in\mathcal{D}$, $\xi\in L^{p}(G/\Gamma,V)$. Then $\text{Ind}_{\Gamma}^{G}(\rho)$ is again a strongly operator continuous linear isometric representation. For notational convenience we will usually write $\widetilde{\rho}:=\text{Ind}_{\Gamma}^{G}(\rho)$. For a 1-cocycle $b\in Z^{1}(\Gamma,\rho)$ we furthermore define its \emph{induced 1-cocycle} $\widetilde{b}\in Z^{1}(G,\widetilde{\rho})$ via $\widetilde{b}(g,h)(x\Gamma):=b\left(\chi_{\mathcal{D}}(g^{-1},x),\chi_{\mathcal{D}}(h^{-1},x)\right)$. The assignment $b\mapsto\widetilde{b}$ induces a topological isomorphism $H^{1}(\Gamma,\rho)\cong H^{1}(G,\widetilde{\rho})$. For more details see \cite{Shalom00} and \cite{BaderFurmanGelanderMonod}.

\vspace{3mm}


\subsubsection{Superrigidity}

In the same way that Theorem \ref{NowakDirectProduct} can be viewed as a partial generalization of \cite[Theorem C]{BaderFurmanGelanderMonod} (see Remark \ref{UniformlyConvexExample}), the following theorem can be viewed as a partial generalization of \cite[Theorem D]{BaderFurmanGelanderMonod}. Its proof is essentially the same as the one of \cite[Theorem 4.1]{Shalom00}. We sketch it for completeness. Under suitable technical assumptions, the statement extends to arbitrary products of groups.

\begin{theorem} \label{NowakSuperrigidity} Let $G:=G_{1}\times G_{2}$ be a product of locally compact second countable groups, $\Gamma\leq G$ an irreducible $p$-integrable lattice, and $(V,\rho)$ a strongly operator continuous isometric Banach $\Gamma$-module. Assume that the groups admit Baire probability measures $\mu_{1}\in\mathcal{P}_{\sigma}(G_{1})$, $\mu_{2}\in\mathcal{P}_{\sigma}(G_{2})$ and null sets $\mathcal{N}_{1}\subseteq G_{1}$, $\mathcal{N}_{2}\subseteq G_{2}$ for which $G_{1}\setminus\mathcal{N}_{1}$, $G_{2}\setminus\mathcal{N}_{2}$ are contained in compact sets, for which $\Vert\widetilde{\rho}_{\mu_{1}}\widetilde{\rho}_{\mu_{2}}\Vert<1$, and which satisfy $\widetilde{V}^{\mu_{1}}=\widetilde{V}^{G_{1}}$, $\widetilde{V}^{\mu_{2}}=\widetilde{V}^{G_{2}}$. Finally, let $b\in Z^{1}(\Gamma,\rho)$ be a 1-cocycle. Then there exists for every $i\in\{1,2\}$ a $\Gamma$-invariant closed subspace $V_{i}\subseteq V$ on which the $\Gamma$-representation extends to a strongly operator continuous isometric linear $G$-representation that factors through a representation of $G_{i}$. Furthermore, there exist 1-cocycles $b_{1}\in Z^{1}(G,V_{1})$, $b_{2}\in Z^{1}(G,V_{2})$ such that $b$ is cohomologous to $b_{1}|_{\Gamma \times \Gamma}+b_{2}|_{\Gamma \times \Gamma}$ in $Z^{1}(\Gamma,V)$ and such that $b_{i}$ only depends on $G_{i}$ for $i\in\{1,2\}$. \end{theorem}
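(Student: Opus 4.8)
The plan is to lift the cocycle $b$ to the induced representation $\widetilde{\rho}=\text{Ind}_{\Gamma}^{G}(\rho)$, to apply Theorem \ref{NowakDirectProduct} there, and then to descend the resulting splitting back to $\Gamma$ by exploiting the irreducibility of $\Gamma$, following the argument of \cite[Theorem 4.1]{Shalom00} (see also \cite[Theorem D]{BaderFurmanGelanderMonod}).

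Concretely, I would first fix the exponent $1<p<\infty$ furnished by the $p$-integrability of $\Gamma$ together with a Borel fundamental domain $\mathcal{D}$, and form the induced, isometric, strongly operator continuous $G$-module $\widetilde{V}=L^{p}(G/\Gamma,V)$ with representation $\widetilde{\rho}$ as in Subsection \ref{subsec:Induced-representations}; recall that $b\mapsto\widetilde{b}$ induces the topological isomorphism $H^{1}(\Gamma,\rho)\cong H^{1}(G,\widetilde{\rho})$. By the standing hypotheses, $G_{1}$ and $G_{2}$ carry $\mu_{1},\mu_{2}$ and null sets $\mathcal{N}_{1},\mathcal{N}_{2}$ with $G_{i}\setminus\mathcal{N}_{i}$ contained in a compact set, $\Vert\widetilde{\rho}_{\mu_{1}}\widetilde{\rho}_{\mu_{2}}\Vert<1$, and $\widetilde{V}^{\mu_{i}}=\widetilde{V}^{G_{i}}$, so that $(\widetilde{V},\widetilde{\rho})$ verifies all the hypotheses of Theorem \ref{NowakDirectProduct}. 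Applying that theorem together with Remark \ref{DecompositionRemark1} to $\widetilde{b}\in Z^{1}(G,\widetilde{\rho})$ then yields cocycles $\widetilde{b}_{1},\widetilde{b}_{2}\in Z^{1}(G,\widetilde{\rho})$ with $\widetilde{b}$ cohomologous to $\widetilde{b}_{1}+\widetilde{b}_{2}$, where $\widetilde{b}_{i}$ takes values in the closed $G$-submodule $W_{i}:=\widetilde{V}^{G_{3-i}}$ and, as a cocycle, factors through $G_{i}=G/G_{3-i}$. Since $G_{3-i}$ acts trivially on $W_{i}$ by construction, the $G$-representation on $W_{i}$ factors through $G_{i}$.

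It then remains to descend these data to $V$ itself, and this is where irreducibility is used: because $p_{i}(\Gamma)$ is dense in $G_{i}$, the $G_{3-i}$-action on $G/\Gamma$ is ergodic, and — arguing as in \cite[Theorem 4.1]{Shalom00} — an analysis of the (twisted) $G_{3-i}$-invariant submodule $W_{i}$ of $\widetilde{V}$, combined with the explicit shape of $\widetilde{b}$ and the inverse of the induction isomorphism, produces a $\Gamma$-invariant closed subspace $V_{i}\subseteq V$ on which $\rho$ extends to a strongly operator continuous isometric $G$-representation factoring through $G_{i}$, together with cocycles $b_{i}\in Z^{1}(G,V_{i})$ depending only on $G_{i}$ such that $b$ is cohomologous to $b_{1}|_{\Gamma\times\Gamma}+b_{2}|_{\Gamma\times\Gamma}$ in $Z^{1}(\Gamma,V)$.

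I expect this last step — the descent from the large induced module $\widetilde{V}$ to a submodule of $V$ — to be the principal difficulty: it requires controlling the measurability of the induced objects, running Shalom's ergodicity/density argument in the Banach ($L^{p}$) rather than the Hilbert setting, and checking that the correspondence between $Z^{1}(\Gamma,\rho)$ and $Z^{1}(G,\widetilde{\rho})$ is compatible with the splitting $\widetilde{b}\sim\widetilde{b}_{1}+\widetilde{b}_{2}$. By contrast, the applicability of Theorem \ref{NowakDirectProduct} to $(\widetilde{V},\widetilde{\rho})$ is immediate here, since the relevant quantitative hypotheses are assumed outright.
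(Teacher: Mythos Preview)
Your proposal is correct and follows essentially the same route as the paper: induce to $\widetilde{V}$, apply Theorem \ref{NowakDirectProduct} via Remark \ref{DecompositionRemark1} to split $\widetilde{b}\sim\widetilde{b}_{1}+\widetilde{b}_{2}$, and then descend using Shalom's argument from \cite[Theorem 4.1]{Shalom00}. The paper makes the descent step slightly more concrete by first pulling each $\widetilde{b}_{i}$ back to a $\Gamma$-cocycle $\beta_{i}$ via the induction isomorphism, then defining $V_{i}'$ as the closed affine subspace of $v\in V$ on which the affine $\Gamma$-action associated with $\beta_{i}$ extends continuously along $p_{i}$, invoking \cite[Proposition 4.4]{Shalom00} (together with \cite{Margulis91}, \cite{BekkaLouvet97}) to see that $V_{i}'\neq\emptyset$, and finally extending by density of $p_{i}(\Gamma)$; this is exactly the mechanism you allude to when citing Shalom.
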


\begin{proof} Let $\mathcal{D}$ be a Borel fundamental domain for $\Gamma$ with $e\in\mathcal{D}$, which is either compact or satisfies the second condition in Definition \ref{p-integrableDefinition}. Furthermore, fix a 1-cocycle $b\in Z^{1}(\Gamma,\rho)$, consider the induced representation $\widetilde{\rho}:=\text{Ind}_{\Gamma}^{G}(\rho):G\rightarrow\text{GL}(L^{p}(G/\Gamma,V))$ and let $\widetilde{b}\in Z^{1}(G,\widetilde{\rho})$ be the induced 1-cocycle. Theorem \ref{NowakDirectProduct} (or rather Remark \ref{DecompositionRemark1}) implies in combination with our assumptions that $\widetilde{b}$ is cohomologous to a sum $\widetilde{b}_{1}+\widetilde{b}_{2}$ with $\widetilde{b}_{1},\widetilde{b}_{2}\in Z^{1}(G,\widetilde{\rho})$, where $\widetilde{b}_{i}$ takes for every $i\in\{1,2\}$ values in $G_{i}^{\prime}:=\prod_{j\neq i}G_{j}$ and $\widetilde{b}_{i}$ factors to an element in $Z^{1}(G_{i},V^{G_{i}^{\prime}})$.

Now fix $i\in\{1,2\}$ and consider the 1-cocycle $\widetilde{b}_{i}\in Z^{1}(G,\widetilde{\rho})$. As discussed in Subsection \ref{subsec:Induced-representations}, $H^{1}(\Gamma,\rho)\cong H^{1}(G,\widetilde{\rho})$ so that we find $\beta_{i}\in Z^{1}(\Gamma,\rho)$ with $\widetilde{b}_{i}-\widetilde{\beta}_{i}\in B^{1}(G,\widetilde{\rho})$. In other words, there exists a vector $\xi\in L^{p}(G/\Gamma,V)$ with $\widetilde{b}_{i}(g,h)=\widetilde{\beta}_{i}(g,h)+\widetilde{\rho}_{h}(\xi)-\widetilde{\rho}_{g}(\xi)$ for all $g,h\in G$. Since $\widetilde{b}_{i}$ factors to an element in $Z^{1}(G_{i},V^{G_{i}^{\prime}})$, the 1-cocycle vanishes on $G_{i}^{\prime}$ so that $\widetilde{\beta}_{i}(g,h)=\widetilde{\rho}_{g}(\xi)-\widetilde{\rho}_{h}(\xi)$ for all $g,h\in G_{i}^{\prime}$. Write $\gamma\cdot v:=\rho_{\gamma}(v)+\beta_{i}(e,\gamma)$ for the affine $\Gamma$-action on $V$ associated with $\beta_{i}$ and consider the closed affine subspace $V_{i}^{\prime}$ of $V$ consisting of all elements $v\in V$ for which $\gamma_{j}\cdot v\rightarrow v$ for every sequence $(\gamma_{j})_{j\in\mathbb{N}}\subseteq\Gamma$ with $p_{i}(\gamma_{j})\rightarrow e$, where $p_{i}:G\rightarrow G_{i}$ denotes the projection onto the $i$-th component of $G$. By the same argument as in \cite[Proposition 4.4]{Shalom00} (which again relies on \cite[Theorem 6.3]{Margulis91} and \cite[Lemma 1]{BekkaLouvet97}), we find that $V_{i}^{\prime}$ is non-empty. Note also that $V_{i}^{\prime}$ is invariant under the affine $\Gamma$-action. We claim that the action continuously extends to an action of $G$ on $V_{i}^{\prime}$ that factors through $G_{i}$. Indeed, by the irreducibility of $\Gamma\leq G$ we may for every $g\in G$ choose a sequence $(\gamma_{j}(g))_{j\in\mathbb{N}}\subseteq\Gamma$ with $p_{i}\left(\gamma_{j}(g)\right)\rightarrow p_{i}(g)$. Since $(\gamma_{j}(g)\cdot v)_{j\in\mathbb{N}}\subseteq V_{i}^{\prime}$ is a Cauchy sequence, for every $v\in V_{i}^{\prime}$ the limit $g\cdot v:=\lim_{j\in\mathbb{N}}\gamma_{j}(g)\cdot v$ exists. By the above, it is furthermore independent of the choice of $(\gamma_{j}(g))_{j\in\mathbb{N}}\subseteq\Gamma$, it factors through $G_{i}$, and it is continuous.

Now choose an element $v_{i}\in V^{\prime}$ and consider the closed subspace $V_{i}:=V_{i}^{\prime}+v_{i}$ of $V$. For every $\gamma\in\Gamma$ we have that $\rho_{\gamma}(v)=\gamma\cdot(v-v_{i})-\gamma\cdot v_{i}$. It thereby follows in combination with the above that $\rho$ extends to a strongly operator continuous isometric linear representation of $G$ on $V_{i}^{\prime}$. By a similar argument, the 1-cocycle $(\gamma,\gamma^{\prime})\mapsto\beta_{i}(\gamma,\gamma^{\prime})+\rho_{\gamma^{\prime}}(v_{i})-\rho_{\gamma}(v_{i})$ extends to an element $b_{i}$ in $Z^{1}(G,V_{i})$ that factors through $G_{i}$.

By $H^{1}(\Gamma,\rho)\cong H^{1}(G,\widetilde{\rho})$, the sum $\beta_{1}+\beta_{2}$ is cohomologous to $b$, so that $b$ must also be cohomologous to $b_{1}|_{\Gamma \times \Gamma}+b_{2}|_{\Gamma \times \Gamma}$. This implies the claim. \end{proof}

Analogously, the argument in the proof of Theorem \ref{NowakSuperrigidity} implies with Remark \ref{DecompositionRemark2} the following theorem.

\begin{theorem} \label{WeaklyProductSuperrigidity} Let $G:=G_{1}\times G_{2}$ be a product of locally compact second countable groups, $\Gamma\leq G$ a $p$-integrable lattice, $\mu_{1}\in\mathcal{P}_{\sigma}(G_{1})$, $\mu_{2}\in\mathcal{P}_{\sigma}(G_{2})$ Baire probability measures, and $(V,\rho)$ a uniformly bounded strongly operator continuous Banach $\Gamma$-module. Assume that $(\widetilde{V},\widetilde{\rho}|_{G_{1}})$ is weakly uniquely $\mu_{1}$-stationary without almost invariant unit vectors, that $(\widetilde{V},\widetilde{\rho}|_{G_{2}})$ is weakly uniquely $\mu_{2}$-stationary, and that $\widetilde{V}^{\mu_{1}}=\widetilde{V}^{G_{1}},\widetilde{V}^{\mu_{2}}=\widetilde{V}^{G_{2}}$. Finally, let $b\in Z^{1}(\Gamma,\rho)$ be a 1-cocycle. Then there exists a $\Gamma$-invariant closed subspace $V_{0}\subseteq V$ on which the $\Gamma$-representation extends to a strongly operator continuous isometric linear $G$-representation that factors through a representation of $G_{1}$. Furthermore, there exists a 1-cocycles $b_{0}\in Z^{1}(G,V_{0})$ such that $b$ is cohomologous to $b_{0}|_{\Gamma \times \Gamma}$ in $Z^{1}(\Gamma,V)$ and such that $b_{0}$ only depends on $G_{1}$. \end{theorem}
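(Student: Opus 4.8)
The plan is to run the proof of Theorem~\ref{NowakSuperrigidity} essentially verbatim, feeding in the one-sided splitting of Remark~\ref{DecompositionRemark2} in place of the two-sided splitting of Remark~\ref{DecompositionRemark1}, so that only a single summand survives.

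\emph{Reduction via induction.} First I would fix a Borel fundamental domain $\mathcal{D}$ for $\Gamma$ with $e\in\mathcal{D}$ (compact if $\Gamma$ is cocompact, and otherwise satisfying the second alternative of Definition~\ref{p-integrableDefinition}), and pass to the induced representation $\widetilde{\rho}=\text{Ind}_{\Gamma}^{G}(\rho)$ on $\widetilde{V}=L^{p}(G/\Gamma,V)$ together with the induced $1$-cocycle $\widetilde{b}\in Z^{1}(G,\widetilde{\rho})$ of the given $b$. The hypotheses say precisely that $(\widetilde{V},\widetilde{\rho}|_{G_{1}})$ is weakly uniquely $\mu_{1}$-stationary without almost invariant unit vectors, that $(\widetilde{V},\widetilde{\rho}|_{G_{2}})$ is weakly uniquely $\mu_{2}$-stationary, and that $\widetilde{V}^{\mu_{1}}=\widetilde{V}^{G_{1}}$ and $\widetilde{V}^{\mu_{2}}=\widetilde{V}^{G_{2}}$, so, by the second part of Theorem~\ref{WeaklyProductDecomposition} (reformulated in Remark~\ref{DecompositionRemark2}), $\widetilde{b}$ is cohomologous in $Z^{1}(G,\widetilde{\rho})$ to a cocycle $\widetilde{b}_{1}$ that takes values in $\widetilde{V}^{G_{2}}$ and factors to an element of $Z^{1}(G_{1},\widetilde{V}^{G_{2}})$; in particular $\widetilde{b}_{1}$ vanishes on $\{e\}\times G_{2}$. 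Using the induction isomorphism $H^{1}(\Gamma,\rho)\cong H^{1}(G,\widetilde{\rho})$ from Subsection~\ref{subsec:Induced-representations}, I would choose $\beta\in Z^{1}(\Gamma,\rho)$ with $\widetilde{b}_{1}-\widetilde{\beta}\in B^{1}(G,\widetilde{\rho})$; since $\widetilde{b}\sim\widetilde{b}_{1}$, this forces $b\sim\beta$ in $Z^{1}(\Gamma,\rho)$. Finally, writing $\widetilde{b}_{1}(g,h)=\widetilde{\beta}(g,h)+\widetilde{\rho}_{h}(\xi)-\widetilde{\rho}_{g}(\xi)$ for a suitable $\xi\in\widetilde{V}$ and restricting to $g,h\in\{e\}\times G_{2}$, where $\widetilde{b}_{1}$ vanishes, shows that $\xi$ is fixed by the affine $G_{2}$-action attached to $\widetilde{\beta}$.

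\emph{Constructing the subspace and extending the action.} Following \cite[Proposition~4.4]{Shalom00}, I would next consider the affine $\Gamma$-action $\gamma\cdot v:=\rho_{\gamma}(v)+\beta(e,\gamma)$ on $V$ and the closed affine subspace $V_{0}'$ of all $v\in V$ for which $\gamma_{j}\cdot v\to v$ whenever $(\gamma_{j})_{j}\subseteq\Gamma$ has $p_{1}(\gamma_{j})\to e$, where $p_{1}\colon G\to G_{1}$ is the first projection. The $p$-integrability of $\Gamma$ together with the $G_{2}$-fixed vector $\xi$ from the previous step then let one run the argument of \cite[Proposition~4.4]{Shalom00} (which invokes \cite[Theorem~6.3]{Margulis91} and \cite[Lemma~1]{BekkaLouvet97}) and conclude that $V_{0}'\neq\emptyset$; moreover $V_{0}'$ is plainly invariant under the affine $\Gamma$-action. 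As in the proof of Theorem~\ref{NowakSuperrigidity}, using that $p_{1}(\Gamma)$ is dense in $G_{1}$, for every $g\in G$ and $v\in V_{0}'$ the sequence $\gamma_{j}(g)\cdot v$ with $p_{1}(\gamma_{j}(g))\to p_{1}(g)$ is Cauchy, its limit $g\cdot v$ is independent of the chosen sequence, depends only on $p_{1}(g)$, and is continuous in $g$; this extends the affine $\Gamma$-action on $V_{0}'$ to a continuous affine $G$-action factoring through $G_{1}$. Translating by a base point $v_{0}\in V_{0}'$ then linearizes this action, producing a $\Gamma$-invariant closed subspace $V_{0}\subseteq V$ on which $\rho$ extends to a strongly operator continuous isometric linear $G$-representation factoring through $G_{1}$, and producing a $1$-cocycle on $\Gamma$ cohomologous to $\beta$, namely $(\gamma,\gamma')\mapsto\beta(\gamma,\gamma')+\rho_{\gamma'}(v_{0})-\rho_{\gamma}(v_{0})$, which extends to a cocycle $b_{0}\in Z^{1}(G,V_{0})$ depending only on $G_{1}$. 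Combining $b\sim\beta$ with this last cohomology gives $b\sim b_{0}|_{\Gamma\times\Gamma}$ in $Z^{1}(\Gamma,V)$, which is the assertion.

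\emph{Expected main obstacle.} The formal steps---the application of Theorem~\ref{WeaklyProductDecomposition}, the descent along $H^{1}(\Gamma,\rho)\cong H^{1}(G,\widetilde{\rho})$, and the final linearization---are routine, exactly as in the proof of Theorem~\ref{NowakSuperrigidity}. The delicate part is the construction of $V_{0}'$ and the extension of the affine action, i.e.\ carrying \cite[Proposition~4.4]{Shalom00} over to the present (non-unitary) Banach setting: one must exploit the $p$-integrability of the lattice to keep the induced cocycle under control and the density of $p_{1}(\Gamma)$ in $G_{1}$ to promote the asymptotic $G_{2}$-invariance of $\xi$ both to the nonemptiness of $V_{0}'$ and to the continuous $G$-extension of the action. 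I expect this to be the step requiring the most care.
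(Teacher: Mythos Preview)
Your proposal is correct and follows precisely the approach the paper intends: it runs the proof of Theorem~\ref{NowakSuperrigidity} with Remark~\ref{DecompositionRemark2} (i.e.\ the second part of Theorem~\ref{WeaklyProductDecomposition}) replacing Remark~\ref{DecompositionRemark1}, so that only a single factor $G_{1}$ survives. The paper itself gives no more detail than this, and you have correctly identified the one nontrivial step---transporting the Shalom argument (\cite[Proposition~4.4]{Shalom00}, via \cite[Theorem~6.3]{Margulis91} and \cite[Lemma~1]{BekkaLouvet97}) to the Banach setting---which the paper likewise handles by citation in the proof of Theorem~\ref{NowakSuperrigidity}; note also that your use of the density of $p_{1}(\Gamma)$ in $G_{1}$ presupposes irreducibility of the lattice, an assumption present in Theorem~\ref{NowakSuperrigidity} but tacit here.
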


\begin{proof} Let $\mathcal{D}$ be a Borel fundamental domain for $\Gamma$ with $e\in\mathcal{D}$, which is either compact or satisfies the second condition in Definition \ref{p-integrableDefinition}, and fix a 1-cocycle $b\in Z^{1}(\Gamma,\rho)$. Consider the induced representation $\widetilde{\rho}:=\text{Ind}_{\Gamma}^{G}(\rho):G\rightarrow\text{GL}(L^{p}(G/\Gamma,V))$ and let $\widetilde{b}\in Z^{1}(G,\widetilde{\rho})$ be the induced 1-cocycle. Theorem \ref{WeaklyProductDecomposition} (or rather Remark \ref{DecompositionRemark2}) implies that $\widetilde{b}$ is cohomologous to a sum $\widetilde{b}_{1}+\widetilde{b}_{2}$ with $\widetilde{b}_{1},\widetilde{b}_{2}\in Z^{1}(G,\widetilde{\rho})$, where $\widetilde{b}_{i}$ takes for every $i\in\{1,2\}$ values in $G_{i}^{\prime}:=\prod_{j\neq i}G_{j}$ and $\widetilde{b}_{i}$ factors to an element in $Z^{1}(G_{i},V^{G_{i}^{\prime}})$. 

From here, the proof follows the one of Theorem \ref{NowakSuperrigidity}. \end{proof}

\vspace{3mm}



\end{document}